\newtheorem{Thm}{Theorem}[section]
\newtheorem{Con}[Thm]{Conjecture}
\newtheorem{Lem}[Thm]{Lemma}
\newtheorem{Cor}[Thm]{Corollary}
\newtheorem{Pro}[Thm]{Proposition}
\def\blfootnote{\xdef\@thefnmark{}\@footnotetext}
\theoremstyle{definition}
\theoremstyle{remark}
\newtheorem{Rem}[Thm]{\bf{Remark}}
\newtheorem{Eg}[Thm]{\bf{Example}}
\newcommand{\ConvD}{\overset{d}{\rightarrow}}
\newcommand{\ConvFDD}{\overset{f.d.d.}{\longrightarrow}}
\title{Multivariate limit theorems in the context of long-range dependence}
\author{
        Shuyang Bai \\
        Murad S. Taqqu
}
\begin{document}
\maketitle
\begin{abstract}
We study the limit law of a vector made up of normalized sums of functions of long-range dependent stationary Gaussian series. Depending on the memory parameter of the Gaussian series and on the Hermite ranks of the functions, the resulting limit law may be (a) a multivariate Gaussian process involving dependent Brownian motion marginals, or (b) a multivariate process involving dependent Hermite processes as marginals, or (c) a combination. We treat cases (a), (b) in general and case (c) when the Hermite components  involve ranks 1 and 2.
We include a conjecture about case (c) when the Hermite ranks are arbitrary.
\end{abstract}
\blfootnote{
\begin{flushleft}
\textbf{Key words} Long-range dependence; Gaussian process;  Central limit theorems; Non-central limit theorems; Asymptotic independence; Multiple Wiener-It\^o integrals
\end{flushleft}
\textbf{2010 AMS Classification:} 60G18, 60F05\\
}

\section{Introduction}
A stationary time series displays long-range dependence if its auto-covariance decays slowly or if its spectral density diverges around the zero frequency. When there is long-range dependence, the asymptotic limits of various estimators are often either Brownian Motion or a Hermite process. The most common Hermite processes are fractional Brownian motion (Hermite process of order 1) and the Rosenblatt process (Hermite process of order 2), but there are Hermite processes of any order. Fractional Brownian motion is the only Gaussian Hermite process.

Most existing limit theorems involve univariate convergence, that is, convergence to a single limit process, for example, Brownian motion or a Hermite process (\cite{breuer1983central,dobrushin1979non,taqqu1979convergence}). In time series analysis, however, one often needs joint convergence, that is, convergence to a vector of processes. This is because one often needs to consider different statistics of the process jointly. See, for example, \cite{levy2011asymptotic,Rooch:2012}.
We establish a number of results involving joint convergence, and conclude with a conjecture.

Our setup is as follows.
Suppose $\{X_n\}$ is a stationary Gaussian series with mean 0, variance 1 and  regularly varying auto-covariance
\begin{equation}\label{e:LRDcov}
\gamma(n)=L(n)n^{2d-1}
\end{equation}
where $0<d<1/2$, and $L$ is a slowly varying function at infinity. This is often referred to \textit{``long-range dependence''(LRD)} or \textit{``long memory''} in the literature, and $d$ is called the \emph{memory parameter}. The higher $d$, the stronger the dependence. The slow decay (\ref{e:LRDcov}) of $\gamma(n)$ yields
\[\sum_{n=-\infty}^\infty|\gamma(n)|=\infty.\]
The case where \[\sum_{n=-\infty}^\infty|\gamma(n)|<\infty,\] is often referred to \textit{``short-range dependence'' (SRD)} or \textit{``short memory''}.  See \cite{beran1994statistics,doukhan2003theory,giraitis2009large} for more details about these notions.

We are interested  in the limit behavior of the finite-dimensional distributions (f.d.d.) of the following vector as $N\rightarrow \infty$:
\begin{equation}\label{VectorQuest}
\mathbf{V}_N(t)=\left(\frac{1}{A_j(N)}\sum_{n=1}^{[Nt]}\Big(G_j(X_n)-\mathbb{E}G_j(X_n)\Big)\right)_{j=1,\ldots,J},
\end{equation}
where $G_j$, $j=1,\ldots,J$ are nonlinear functions, $t>0$ is the time variable, and  $A_j(N)$'s are appropriate normalizations which make the variance of each component at $t=1$ tend to 1.
Observe that the same sequence $\{X_n\}$ is involved in each component of $\mathbf{V}_N$, in contrast to \cite{ho1990limiting} who consider the case $J=2$ and
$\{(X_n,Y_n)\}$ is a bivariate Gaussian vector series.

Note also that convergence in f.d.d.\ implies that our results continue to hold if one replaces the single time variable $t$ in (\ref{VectorQuest}) with a vector $(t_1,\ldots,t_J)$ which would make $\mathbf{V}_N(t_1,\ldots,t_J)$ a random field.

Depending on the memory parameter of the Gaussian series and on the Hermite ranks  of the functions (Hermite ranks are defined in Section \ref{Sec:ReviewUni}), the resulting limit law for (\ref{VectorQuest}) may be:
\begin{enumerate}[(a)]
\item a multivariate Gaussian process with dependent Brownian motion marginals,
\item or a multivariate process with dependent Hermite processes as marginals,
\item or a combination.
\end{enumerate}
We treat cases (a), (b) in general and case (c) when the Hermite components  involve ranks 1 and 2 only.
To address case (c), we apply a recent asymptotic independence theorem of Nourdin and Rosinski \cite{nourdin2012asymptotic}  of Wiener-It\^{o} integral vectors. We include a conjecture about case (c) when the Hermite ranks are arbitrary. We also prove that the Hermite processes in the limit are dependent on each other. Thus, in particular, fractional Brownian motion and the Rosenblatt process in the limit are dependent processes even though they are uncorrelated. Although our results are formulated in terms of convergence of f.d.d.\ , under some additional assumption, they extend to weak convergence in  $D[0,1]^J$(J-dimensional product space where $D[0,1]$ is the space of C\`adl\`ag functions on $[0,1]$ with the uniform metric), as noted in \cref{weakconv} at the end of Section \ref{Sec:Mult}.

The paper is structured as follows. We review the univariate results in Section \ref{Sec:ReviewUni}. In Section \ref{Sec:Mult}, we state the corresponding multivariate results. Section \ref{Sec:Proof} contains the proofs of the theorems in Section \ref{Sec:Mult}.
Appendix \ref{sec:JointInvar} shows that the different representations of the Hermite processes are also equivalent in a multivariate setting. Appendix \ref{sec:AsympIndep} refers to the results of \cite{nourdin2012asymptotic} and concerns asymptotic independence of Wiener-It\^o integral vectors.

\section{Review of the univariate results}
\label{Sec:ReviewUni}
We review first results involving (\ref{VectorQuest}) when $J=1$ in (\ref{VectorQuest}).
Assume that $G$ belongs to $L^2(\phi)$, the set of square-integrable functions with respect to the standard Gaussian measure $\phi$. This Hilbert space $L^2(\phi)$ has a complete orthogonal basis $\{H_m(x)\}_{m\ge 0}$, where $H_m$ is the  \emph{Hermite polynomial} defined as
$$
H_m(x)=(-1)^m\exp\left(\frac{x^2}{2}\right)\frac{d^m}{dx^m}\exp\left(\frac{-x^2}{2}\right),
$$
 (\cite{nourdin2012normal}, Chapter 1.4). Therefore, every function $G\in L^2(\phi)$ admits the following type of expansion:
\begin{equation}\label{HermExpan}
G=\sum_{m\ge 0}g_mH_m,
\end{equation}
where $g_m=(m!)^{-1}\int_\mathbb{R}G(x)H_m(x)d\phi(x)$.

Since $H_0(x)=1$ and since we always center the series $\{G(X_n)\}$ by subtracting its mean in (\ref{VectorQuest}),
we may always assume  $g_0=\mathbb{E}G(X_n)=0$.
The smallest index $k\ge 1$ for which $g_k\neq 0$ in the expansion (\ref{HermExpan}) is called the \emph{Hermite rank} of $G$.

Since $\{X_n\}$ is a stationary Gaussian series, it has the following spectral representation
\begin{equation}\label{SpecRep}
X_n=\int_{\mathbb{R}}e^{inx}dW(x),
\end{equation}
where $W$ is the complex Hermitian ($W(A)=\overline{W(-A)}$) Gaussian random measure specified by $\mathbb{E}W(A)\overline{W(B)}=F(A\cap B)$. The measure $F$ is called the \emph{spectral distribution} of $\{X_n\}$, is also called the \emph{control measure} of $W$, and is defined by $\gamma(n)=\mathbb{E}X_nX_0=\int_{\mathbb{R}}e^{inx}dF(x)$ (See \cite{lifshits2012lectures}, Chapter 3.2).

Multiple Wiener-It\^{o} integrals (\cite{major1981multiple})
\begin{equation}\label{WienerItoInt}
I_m(K)=\int''_{\mathbb{R}^m}K(x_1,\ldots,x_m)dW(x_1)\ldots dW(x_m)
\end{equation}
where $\int_{\mathbb{R}^m} |K(x_1,\ldots,x_m)|^2dF(x_1)\ldots dF(x_m)<\infty $,
 play an important role because of the following connection between Hermite polynomials and multiple Wiener-It\^{o} integrals (\cite{nourdin2012normal} Theorem 2.7.7):
\begin{equation}\label{Herm<->Int}
H_m(X_n)=\int''_{\mathbb{R}^m}e^{in(x_1+\ldots+x_m)}dW(x_1)\ldots dW(x_m),
\end{equation}
where the double prime $''$ indicates that one doesn't integrate on the hyper-diagonals $x_j=\pm x_k$, $j\neq k$.
Throughout this paper, $I_m(.)$ denotes a $m$-tuple Wiener-It\^{o} integral of the type in (\ref{WienerItoInt}).

We  now recall some well-known univariate results:

\begin{Thm}\label{CLT}\textbf{(SRD Case.)}
Suppose the memory parameter $d$ and the Hermite rank $k\ge 1$ of $G$ satisfy
\[
0<d<\frac{1}{2}(1-\frac{1}{k}).
\]
Then
\[
\frac{1}{A(N)}\sum_{n=1}^{[Nt]}G(X_n)\overset{f.d.d.}{\longrightarrow}B(t),
\]
where $B(t)$ is a standard Brownian Motion, `` $\overset{f.d.d.}{\longrightarrow}$ '' denotes convergence in finite-dimensional distributions along the time variable $t>0$,
$A(N)\propto N^{1/2}$ is a normalization factor such that
\[
\lim_{N\rightarrow \infty}\mathrm{Var} \left(\frac{1}{A(N)}\sum_{n=1}^{N}G(X_n)\right) = 1.
\]
\end{Thm}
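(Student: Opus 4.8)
The plan is to reduce the problem to the classical Breuer–Major central limit theorem together with a variance computation that identifies the normalization $A(N)$ and shows that the limiting covariance structure is that of Brownian motion. First I would write $G=\sum_{m\ge k}g_m H_m$ in its Hermite expansion and set $S_N(t)=\sum_{n=1}^{[Nt]}G(X_n)$. The key point is that the variance of a single Hermite polynomial term satisfies $\mathbb{E}[H_m(X_0)H_m(X_n)]=m!\,\gamma(n)^m$, and since $\gamma(n)=L(n)n^{2d-1}$ with $d<\frac12(1-\frac1k)$, we get $|\gamma(n)|^m \le |\gamma(n)|^k$ for $n$ large, and $\sum_n |\gamma(n)|^k<\infty$ because $(2d-1)k<-1$. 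Hence $\sum_{n\in\mathbb{Z}}|\gamma(n)|^m<\infty$ for every $m\ge k$, so the series $\{G(X_n)\}$ has summable autocovariances:
\[
\sigma^2:=\sum_{n\in\mathbb{Z}}\mathbb{E}[G(X_0)G(X_n)]=\sum_{m\ge k}g_m^2\,m!\sum_{n\in\mathbb{Z}}\gamma(n)^m\in[0,\infty),
\]
and one checks $\sigma^2>0$ (at least when $g_k\ne 0$, using that $\sum_n \gamma(n)^k>0$ since it is, up to constants, a value of a spectral density type object; more carefully, $\sum_n\gamma(n)^m = \int (\text{density})\ge 0$ and the $m=k$ term dominates). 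This gives $\mathrm{Var}(S_N(1))\sim \sigma^2 N$, so the correct normalization is $A(N)=\sigma\sqrt N \propto N^{1/2}$.

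Next I would establish the one-dimensional convergence $A(N)^{-1}S_N(1)\ConvD \mathcal{N}(0,1)$. This is exactly the content of the Breuer–Major theorem \cite{breuer1983central}: when $\sum_n|\gamma(n)|^k<\infty$, the normalized partial sum of $G(X_n)$ for $G$ of Hermite rank $k$ is asymptotically normal. Alternatively, since the excerpt emphasizes Wiener–It\^o machinery, I would prove it via the fourth-moment/Nualart–Peccati route: truncate the Hermite expansion at level $M$, write each $\sum_n H_m(X_n)$ as a multiple Wiener–It\^o integral $I_m(f_{N,m})$ using \eqref{Herm<->Int}, verify that the contraction norms $\|f_{N,m}\otimes_r f_{N,m}\|\to 0$ for $1\le r\le m-1$ (this is a standard consequence of $\sum|\gamma(n)|^m<\infty$), conclude joint asymptotic normality of the vector $(I_m(f_{N,m}))_{k\le m\le M}$ with independent Gaussian components, and control the tail $\sum_{m>M}$ uniformly in $N$ via the variance bound above. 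Summing the independent Gaussian limits recovers a single Gaussian with variance $\sigma^2$.

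Finally I would upgrade from one-dimensional convergence to convergence of finite-dimensional distributions in $t$ and identify the limit as Brownian motion. For fixed $0<t_1<\dots<t_p$ and $\lambda_1,\dots,\lambda_p\in\mathbb{R}$, the linear combination $\sum_i \lambda_i A(N)^{-1}S_N(t_i)$ is again a normalized partial sum of $G(X_n)$ but with a nonstationary weighting; by a Cram\'er–Wold argument and the same Breuer–Major / fourth-moment estimates (the block sums over disjoint intervals $([Nt_{i-1}],[Nt_i]]$ are asymptotically uncorrelated since cross-covariances are summable) one gets a joint Gaussian limit whose covariance is $\sigma^2 \min(t_i,t_j)$ — the independent-increments structure falls out because $\mathrm{Cov}(S_N(s), S_N(t)-S_N(s))=o(N)$. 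Absorbing $\sigma^2$ into $A(N)$ yields the standard Brownian motion $B(t)$. The main obstacle is the uniform-in-$N$ tail control of the Hermite expansion combined with verifying the contraction estimates — i.e., making the truncation argument rigorous so that the fourth-moment theorem applies termwise and the error is negligible; the increment-independence and variance bookkeeping are then routine.
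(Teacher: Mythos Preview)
Your proposal is correct and aligns with what the paper does: Theorem~\ref{CLT} is stated in the paper as a review of a known univariate result, not proved there; the paper simply cites the original Breuer--Major argument via diagram formulas \cite{breuer1983central} and the modern fourth-moment/Nualart--Peccati route (Theorem~7.2.4 in \cite{nourdin2012normal} for $t=1$), which are precisely the two approaches you sketch. Your outline of the variance computation, the truncation-plus-contraction argument, and the Cram\'er--Wold extension to finite-dimensional distributions is a faithful reconstruction of the standard proof; the only mild looseness is the claim $\sigma^2>0$, which in general requires an extra hypothesis (and is usually just assumed rather than derived), but this does not affect the structure of the argument.
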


\begin{Rem}
It can indeed be shown that in the setting of \cref{CLT},
\begin{equation}\label{VarGrowthSRD}
\mathrm{Var}\left(\sum_{n=1}^{N}G(X_n)\right)\sim \sigma^2N,
\end{equation}
where
\begin{equation}\label{sigma}
\sigma^2=\sum_{m=k}^\infty g_m^2m!\sum_{n=-\infty}^\infty\gamma(n)^m.
\end{equation}
Recall that the $g_m$'s are the coefficients of the Hermite expansion of $G$, and $\gamma$ is the auto-covariance function of $\{X_n\}$.
\end{Rem}

\begin{Rem}\label{summability of autocov}
The condition $0<d<\frac{1}{2}(1-\frac{1}{k})$ can be replaced with a weaker condition $\sum_{n=-\infty}^\infty |\gamma(n)|^k<\infty$ or equivalently $\sum_{n=-\infty}^\infty |\gamma_G(n)|<\infty$, where $\gamma_G(n)$ is the auto-covariance function of $\{G(X_n)\}$. See Theorem 4.6.1 in \cite{giraitis2009large}. If $d=\frac{1}{2}(1-\frac{1}{k})$ but as $N\rightarrow\infty$, $\sum_{n=-N}^N|\gamma(n)|^k=\sum_{-N}^Nn^{-1}|L(n)|^k=:L^*(N)\rightarrow\infty$ is slowly varying, then one still gets convergence to Brownian motion (Theorem 1' of \cite{breuer1983central}), but with the normalization $A(N)\propto \left(NL^*(N)\right)^{1/2}$. For example, if the slowly varying function in (\ref{e:LRDcov}) is $L(n)\sim c>0$, then $A(N)\propto(N\ln N)^{1/2}$.
\end{Rem}

The original proof of \cref{CLT} (\cite{breuer1983central}) was done by a method of moments  using the so-called diagram formulas (\cite{peccati2011wiener}), which provide explicit ways to compute the cumulants of Hermite polynomials of Gaussian random variable. Recently, a remarkable technique for establishing central limit theorems of multiple Wiener-It\^{o} integral was found by \cite{nualart2005central,peccati2005gaussian}, whereby in the multiple Wiener-It\^{o} integral setting, convergence of the fourth moment, or some equivalent easier-to-check  condition,  implies directly the Gaussian limit. See Theorem 7.2.4 in \cite{nourdin2012normal} for a proof in the case $t=1$.

\begin{Thm}\label{NCLT}\textbf{(LRD Case.)}
Suppose that the memory parameter $d$ and the Hermite rank $k\ge 1$ of $G$ satisfy
\[
\frac{1}{2}(1-\frac{1}{k})<d<\frac{1}{2}.
\]
Then
\[
\frac{1}{A(N)}\sum_{n=1}^{[Nt]}G(X_n)\overset{f.d.d.}{\longrightarrow}Z_{d}^{(k)}(t):=I_{k}(f_{k,d}^{(t)}),
\]
where
the control measure of $I_{k}(.)$ is Lebesgue,
 $A(N)\propto N^{1+(d-1/2)k}L(N)^{k/2}$ is a normalization such that
\[
\lim_{N\rightarrow\infty} \mathrm{Var}\left(\frac{1}{A(N)}\sum_{n=1}^{N}G(X_n)\right)=1,
\]
and
\[
f_{k,d}^{(t)}(x_1,\ldots,x_k)=b_{k,d}\frac{e^{it(x_1+\ldots+x_k)}-1}{i(x_1+\ldots+x_k)}|x_1|^{-d}\ldots |x_k|^{-d},
\]
where
\[b_{k,d}=\left(\frac{\left(k(d-1/2)+1\right)\left(2k(d-1/2)+1\right)}{k!\left(2\Gamma(1-2d)\sin(d\pi)\right)^k}\right)^{1/2}\]
is the normalization constant to guarantee unit variance for $Z^{(k)}(1)$.
\end{Thm}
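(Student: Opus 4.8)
The plan is to follow the spectral-domain (Wiener--It\^o) route of Dobrushin and Major: reduce to $G=H_k$, recognize the normalized partial sum as a single $k$-fold Wiener--It\^o integral whose kernel converges in $L^2$ after a self-similar rescaling of the spectral variable, and finally pass from one time point to finite-dimensional distributions by linearity and the Cram\'er--Wold device. For the reduction, expand $G=\sum_{m\ge k}g_mH_m$; it suffices to show $\frac{1}{A(N)}\sum_{n=1}^{[Nt]}H_m(X_n)\to 0$ in $L^2$ for each $m>k$, with a tail bound allowing summation over $m$. By orthogonality of Hermite polynomials under a Gaussian, $\mathrm{Var}\big(\sum_{n=1}^{[Nt]}H_m(X_n)\big)=m!\sum_{|j|<[Nt]}\big([Nt]-|j|\big)\gamma(j)^m$, which, by regular variation of $\gamma$, is of order $N^{2+m(2d-1)}L(N)^m$ when $m(2d-1)>-1$ and of order $N$ (up to a slowly varying factor) when $m(2d-1)\le-1$. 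Dividing by $A(N)^2\asymp N^{2+k(2d-1)}L(N)^k$ and using $d<1/2$ in the first regime and $d>\tfrac12(1-\tfrac1k)$ in the second, every such ratio tends to $0$. The same computation at $m=k$ fixes $A(N)^2\sim k!\,\big((k(2d-1)+1)(k(2d-1)+2)\big)^{-1}N^{2+k(2d-1)}L(N)^k$.

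For the surviving term, (\ref{Herm<->Int}) and linearity give
\[
\frac{1}{A(N)}\sum_{n=1}^{[Nt]}H_k(X_n)=I_k\!\left(\widehat f_N^{(t)}\right),\qquad \widehat f_N^{(t)}(x_1,\dots,x_k)=\frac{1}{A(N)}\sum_{n=1}^{[Nt]}e^{in(x_1+\cdots+x_k)},
\]
where $I_k$ has control measure the spectral distribution $F$. The Tauberian theorem for regularly varying covariances gives that $F$ has, near the origin, a density $f(x)\sim c_d\,|x|^{-2d}L(1/|x|)$ with $c_d=\big(2\Gamma(1-2d)\sin(\pi d)\big)^{-1}$. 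Applying the change of variables $x_j=\lambda_j/N$ in the isometry $\mathbb{E}|I_k(\widehat f_N^{(t)})|^2=k!\int_{\mathbb{R}^k}|\widehat f_N^{(t)}|^2\,dF^{\otimes k}$, the relevant $L^2(\mathbb{R}^k,d\lambda)$ kernel factors into the Dirichlet-type sum $\frac1N\sum_{n=1}^{[Nt]}e^{in(\lambda_1+\cdots+\lambda_k)/N}$ (a Riemann sum for $\int_0^t e^{is(\lambda_1+\cdots+\lambda_k)}ds$, hence converging to $\frac{e^{it(\lambda_1+\cdots+\lambda_k)}-1}{i(\lambda_1+\cdots+\lambda_k)}$), the density term $\prod_j f(\lambda_j/N)^{1/2}$, the Jacobian $N^{-k/2}$, and the factor $N/A(N)$. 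Using the uniform convergence (Potter) bounds for slowly varying functions, the powers of $N$ and of $L(N)$ cancel exactly — this is precisely what the choice of $A(N)$ arranges — and the rescaled kernel converges in $L^2(\mathbb{R}^k,d\lambda)$ to a constant multiple of $\frac{e^{it(x_1+\cdots+x_k)}-1}{i(x_1+\cdots+x_k)}|x_1|^{-d}\cdots|x_k|^{-d}$; square-integrability of this limit is exactly the hypothesis $d>\tfrac12(1-\tfrac1k)$. Since $L^2$ convergence of symmetric kernels forces $L^2(\Omega)$ convergence of the corresponding multiple integrals, $\frac{1}{A(N)}\sum_{n=1}^{[Nt]}H_k(X_n)\to I_k(f_{k,d}^{(t)})=Z_d^{(k)}(t)$; because $A(N)$ and $b_{k,d}$ are both fixed by the same requirement of unit variance at $t=1$, the two normalizations agree, and the displayed value of $b_{k,d}$ comes out of evaluating the beta-type integral $k!\,\|f_{k,d}^{(1)}\|_{L^2(\mathbb{R}^k)}^2=1$.

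To obtain finite-dimensional distributions, fix $0<t_1<\cdots<t_p$ and $\theta_1,\dots,\theta_p\in\mathbb{R}$. Linearity of $I_k$ gives $\sum_{i=1}^p\theta_i\frac{1}{A(N)}\sum_{n=1}^{[Nt_i]}H_k(X_n)=I_k\big(\sum_i\theta_i\widehat f_N^{(t_i)}\big)$, whose kernel converges in $L^2$ to $\sum_i\theta_i f_{k,d}^{(t_i)}$; hence this linear combination converges in $L^2(\Omega)$, a fortiori in law, to $\sum_i\theta_i Z_d^{(k)}(t_i)$. The Cram\'er--Wold device then yields joint convergence of $\big(\frac{1}{A(N)}\sum_{n=1}^{[Nt_i]}H_k(X_n)\big)_{i}$, and the reduction step of the first paragraph transfers this to $G$ (the sign of $g_k$ being immaterial since $Z_d^{(k)}$ is symmetric).

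The main obstacle is the step asserting that the rescaled spectral kernels converge in $L^2(\mathbb{R}^k,d\lambda)$, not merely pointwise: one needs a domination argument uniform in $N$, controlling the kernel near the resonance hyperplane $x_1+\cdots+x_k=0$ and for large $|x_j|$, where the oscillatory Dirichlet factor, the rescaled spectral density, and the general slowly varying function $L$ must be handled together (again via Potter bounds). This is the technical core of the Dobrushin--Major argument; once it is established, the rest is bookkeeping with regular variation and the Wiener--It\^o isometry.
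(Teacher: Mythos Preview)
The paper does not give its own proof of this theorem; it simply cites Dobrushin--Major (1979) and Pipiras--Taqqu (2010). Your outline is essentially the Dobrushin--Major argument and is sound in its architecture: the reduction to $H_k$, the representation as a $k$-fold Wiener--It\^o integral, the self-similar rescaling of the spectral variable, and the Cram\'er--Wold passage to f.d.d.\ are exactly the steps of that proof.

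One technical point deserves flagging. You write the rescaled kernel with a factor $\prod_j f(\lambda_j/N)^{1/2}$, treating the spectral distribution $F$ as if it had a global density $f$ that can be absorbed into the integrand so that the control measure becomes Lebesgue. The hypotheses only give, via the Tauberian theorem, the behaviour of $F$ \emph{near the origin}; $F$ need not be absolutely continuous on all of $\mathbb{R}$. Dobrushin--Major avoid this by keeping the control measure variable: after rescaling one obtains measures $F_N$ (essentially $F$ dilated by $N$ and renormalized) which converge \emph{weakly} to $c_d|x|^{-2d}\,dx$, and their Lemma~3 (the paper restates its multivariate version as Lemma~4.6) passes from weak convergence of control measures plus locally uniform convergence of kernels plus the uniform-integrability tail condition~(\ref{Integrability}) to convergence in law of the multiple integrals. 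Your ``main obstacle'' paragraph identifies the right difficulty, but the resolution is not an $L^2(\mathbb{R}^k,d\lambda)$ domination of a single kernel sequence; it is that measure-varying lemma. If you assume in addition that $F$ has a density (as is common in applications), your absorbed-density route goes through as written.

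A smaller remark on the reduction step: to control the full tail $\sum_{m>k}g_mH_m$ rather than each $H_m$ separately, one uses orthogonality across different Hermite orders together with $|\gamma(n)|\le 1$ to bound $\mathrm{Var}\big(\sum_{n\le N}\sum_{m>k}g_mH_m(X_n)\big)\le \big(\sum_{m>k}g_m^2 m!\big)\sum_{|j|<N}(N-|j|)|\gamma(j)|^{k+1}$, which already has the right growth; your phrase ``a tail bound allowing summation over $m$'' is correct but this is the mechanism.
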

For a proof, see \cite{dobrushin1979non} and \cite{pipiras2010regularization}.
The process $Z_{d}^{(k)}(t)$ appearing in the limit is called a \emph{Hermite process}.

\begin{Rem}
It can indeed be shown that in the setting of \cref{NCLT},
\begin{equation}\label{VarGrowthLRD}
\mathrm{Var}\left(\sum_{n=1}^{N}G(X_n)\right)= L_{G}(N)N^{2d_G+1}
\end{equation}
for some slowly varying function $L_{G}(N)\propto L(N)^k$ and $d_G=(d-1/2)k+1/2$
 (see e.g.\ (3.3.8) in \cite{giraitis2009large}). Since $d<1/2$, increasing the Hermite rank $k$ decreases the memory parameter $d_G$, hence decreases the dependence. Note that if $k\ge 2$, then the variance growth of $\{G(X_n)\}$ in (\ref{VarGrowthLRD}) is slower than the variance growth of $\{X_n\}$,
$\mathrm{Var}(\sum_{n=1}^{N}X_n)= L_0(N)N^{2d+1}$ for some slowly varying function $L_0$, but is always faster than the variance growth $\sigma^2N$ in the SRD case in (\ref{VarGrowthSRD}).
\end{Rem}

The process $Z_{d}^{(1)}(t)$, $t\ge 0$ is a Gaussian process called \emph{fractional Brownian motion}, and
$Z_{d}^{(2)}(t)$, $t\ge 0$ is a non-Gaussian process called \emph{Rosenblatt process}.
The Hermite processes $Z_{d}^{(k)}(t)$ are all so-called \emph{self-similar processes} (\cite{embrechts2002selfsimilar}).


\section{Multivariate convergence results}
\label{Sec:Mult}
Our  aim is to  study the limit of (\ref{VectorQuest}), and in particular, to extend
\cref{CLT} (SRD) and \cref{NCLT} (LRD) to a multivariate setting.

Suppose that for each $j=1,\ldots,J$, the function the functions $G_j$ in (\ref{VectorQuest}) belongs to $L^2(\phi)$, has Hermite rank $k_j$ and admits Hermite expansion $\sum_{m=k_j}^\infty g_{m,j}H_m$ (see (\ref{HermExpan})).

We start with the pure SRD case where every component $\{G_j(X_n)\}$ of $\mathbf{V}_N(t)$ in (\ref{VectorQuest}) is SRD.

\begin{Thm}\label{Pure SRD}\textbf{(Pure SRD Case.)}
If the memory parameter $d$ is small enough so that all $\{G_j(X_n)\}, j=1,\ldots,J$ are SRD, that is,
\[
d<\frac{1}{2}(1-\frac{1}{k_j}),~ j=1,\ldots,J,
\]
then in (\ref{VectorQuest})
\[
\mathbf{V}_N(t)\overset{f.d.d.}{\longrightarrow}\mathbf{B}(t)
\]
as $N\rightarrow \infty$, where the normalization $A_j(N)\propto N^{1/2}$ is such that for $j=1,\ldots,J$,
\begin{equation}\label{e:Norm-SRD}
\lim_{N\rightarrow\infty}\mathrm{Var}\left(\frac{1}{A_j(N)}\sum_{n=1}^{N}G_j(X_n)\right)=1.
\end{equation}
Here $\mathbf{B}(t)=(B_1(t),\ldots,B_J(t))$ is a multivariate Gaussian process with standard Brownian motions as marginals, and where the cross-covariance between two components is
\begin{align}\label{CovStruct BM}
 \mathrm{Cov}\left(B_{j_1}(t_1),B_{j_2}(t_2)\right)
 &=\lim_{N\rightarrow\infty}\mathrm{Cov}(V_{N,j_1}(t_1),V_{N,j_2}(t_2))\notag\\
 &=(t_1\wedge t_2) \left[\frac{1}{\sigma_{j_1}\sigma_{j_2}}\sum_{m=k_{j_1}\vee k_{j_2}}^\infty g_{m,j_1}g_{m,j_2}m!\sum_{n=-\infty}^{\infty}\gamma(n)^m\right]
\end{align}
where
\begin{equation}\label{sigma_j}
\sigma_j^2=\sum_{m=k_j}^\infty g_{m,j}^2 m! \sum_{n=-\infty}^\infty \gamma(n)^m.
\end{equation}
\end{Thm}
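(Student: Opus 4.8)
The plan is to reduce the multivariate statement to the univariate one via the Cramér–Wold device and a careful bookkeeping of covariances. First I would note that, since we are asserting convergence of finite-dimensional distributions, it suffices to fix times $t_1,\ldots,t_r>0$ and show that the $\mathbb{R}^{Jr}$-valued vector $\big(V_{N,j}(t_i)\big)_{i,j}$ converges in law to the corresponding Gaussian vector. By Cramér–Wold this amounts to showing that every fixed linear combination $\sum_{i,j}\lambda_{ij}V_{N,j}(t_i)$ converges to a (one-dimensional) centered Gaussian with the variance dictated by \eqref{CovStruct BM}. The key observation is that each $V_{N,j}(t_i)$ is, up to the normalization $A_j(N)\propto N^{1/2}$, a partial sum of $G_j(X_n)$, so the linear combination is itself $A(N)^{-1}\sum_{n=1}^{[Nt]}$-type object for a single function — but the times $t_i$ differ across terms, so more honestly one writes the linear combination as a sum over blocks determined by the ordered distinct times, on each of which one has a partial sum of a fixed function $H=\sum_{i,j}\lambda_{ij}\mathbf{1}\{\cdot\}G_j$. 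The cleanest route is to first establish the $r=1$ (single time) case and then handle several times by the standard ``increments on disjoint blocks'' argument.

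For the single-time case, fix $t$ and real coefficients $\lambda_1,\ldots,\lambda_J$; set $G=\sum_j (\lambda_j/A_j(N))A(N)\,G_j$ where $A(N)\propto N^{1/2}$ — concretely, since all $A_j(N)\sim c_j N^{1/2}$, the function $G:=\sum_j (\lambda_j/\sigma_j)G_j$ (absorbing constants) has Hermite rank $k=\min_j k_j$, lies in $L^2(\phi)$, and satisfies $d<\tfrac12(1-\tfrac1k)$ since $k\ge k_j$ fails in general — here one must be slightly careful: $k=\min_j k_j$, and the SRD hypothesis $d<\tfrac12(1-1/k_j)$ for every $j$ gives in particular $d<\tfrac12(1-1/\min_j k_j)$ only if... no: $\min_j k_j$ gives the \emph{weakest} bound, so $d<\tfrac12(1-1/k_j)$ for all $j$ does imply $d<\tfrac12(1-1/k)$ with $k=\min_j k_j$. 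Hence by \cref{CLT} (in the strengthened form of \cref{summability of autocov}, noting $\sum|\gamma(n)|^{k_j}<\infty$ for each $j$ and hence $\sum|\gamma_G(n)|<\infty$), $N^{-1/2}\sum_{n=1}^{[Nt]}G(X_n)$ converges in f.d.d.\ to $\sigma_G B(t)$ for a standard Brownian motion, with $\sigma_G^2=\sum_{m\ge k}g_m^2 m!\sum_n\gamma(n)^m$. Expanding $\sigma_G^2$ in terms of the $g_{m,j}$ and matching it against the quadratic form $\sum_{j_1,j_2}\lambda_{j_1}\lambda_{j_2}\mathrm{Cov}(B_{j_1}(t),B_{j_2}(t))$ with the claimed cross-covariance \eqref{CovStruct BM} is then a direct (if tedious) algebraic identity; this pins down the limit as the asserted Gaussian vector at a single time.

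For several times $t_1<\cdots<t_r$, I would pass to increments: $V_{N,j}(t_i)-V_{N,j}(t_{i-1})$ involves the sum over $n\in([Nt_{i-1}],[Nt_i]]$, and these index blocks are disjoint. A stationarity argument (each block's contribution has the same law, up to its length $\sim N(t_i-t_{i-1})$, as a partial sum from $1$ to that length) together with the fact that in the SRD regime the cross-block covariances are $o(N)$ — because $\sum_n|\gamma_G(n)|<\infty$ makes the covariance between two far-apart blocks summably small relative to $N$ — shows the increment vectors are asymptotically independent across blocks, each converging by the single-time result. Assembling these gives the Gaussian process $\mathbf{B}$ with independent increments and the stated covariance, which is exactly a vector of (correlated) Brownian motions with cross-covariance $(t_1\wedge t_2)$ times the bracketed constant. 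The convergence of the covariances themselves, the first displayed equality in \eqref{CovStruct BM}, follows from \eqref{VarGrowthSRD}–\eqref{sigma} applied to each $G_j$ and to the cross-terms $G_{j_1}+G_{j_2}$ via polarization.

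The main obstacle I anticipate is not the Gaussian limit per se — that is handed to us by \cref{CLT} through Cramér–Wold — but the justification that the \emph{joint} object converges, i.e.\ that no pathological cross-correlation between the components survives in the limit beyond what \eqref{CovStruct BM} records. Concretely, one must verify that $\mathrm{Cov}\big(\sum_{n\le [Nt_1]}G_{j_1}(X_n),\ \sum_{n\le[Nt_2]}G_{j_2}(X_n)\big)/N$ converges to the claimed bilinear expression; this requires interchanging the sum over Hermite levels $m$ with the limit $N\to\infty$, which is legitimate by dominated convergence once one bounds $\sum_m |g_{m,j_1}g_{m,j_2}| m!\,\big|\sum_{|n|\le N}\gamma(n)^m\big|/N$ uniformly — and here the hypothesis $\sum_n|\gamma(n)|^{k_j}<\infty$ for the relevant ranks, plus $|\gamma(n)|\le 1$, does the job, but the estimate must be set up with some care because $\gamma(n)^m$ can be negative. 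Once that uniform summability is in hand, everything else is routine.
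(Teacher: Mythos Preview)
Your single-time reduction via Cram\'er--Wold is sound: forming $G=\sum_j(\lambda_j/\sigma_j)G_j$ and applying \cref{CLT} to this one function gives the asymptotic normality of any linear combination $\sum_j\lambda_j V_{N,j}(t)$, and your remark about the Hermite rank of $G$ being at least $\min_j k_j$ (so the SRD condition is inherited) is correct. For a single time point this is a perfectly legitimate, and arguably more elementary, alternative to what the paper does.

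The gap is in the multi-time step. You argue that, after passing to increments over the disjoint blocks $([Nt_{\ell-1}],[Nt_\ell]]$, the fact that cross-block covariances are $o(N)$ ``shows the increment vectors are asymptotically independent across blocks.'' But asymptotic uncorrelatedness is not asymptotic independence: the block contributions $\Delta_{N,\ell}$ are nonlinear (polynomial) functionals of the Gaussian inputs, and vanishing second-order cross-moments say nothing about higher joint moments. You would still need to prove that $\sum_\ell\mu_\ell\Delta_{N,\ell}$ is asymptotically Gaussian for every choice of $\mu_\ell$, and this sum is \emph{not} a partial sum of a single function of $\{X_n\}$ (the function changes from block to block), so \cref{CLT} does not apply to it. Gaussian-subordinated sequences under a mere summable-covariance assumption are not known to satisfy a mixing condition strong enough for a Bernstein big-block/small-block argument, so there is no cheap route to independence of blocks here.

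The paper closes exactly this gap by invoking a different mechanism: after truncating each $G_j$ to a finite Hermite sum (\cref{ReductCLT}) and writing every entry $V_{N,j}(t_i)$ as a finite sum of multiple Wiener--It\^o integrals $I_m(f_{N,m,i,j})$, it applies the Peccati--Tudor theorem (\cref{MultiCLT}), which says that for a vector of multiple Wiener--It\^o integrals, marginal convergence to Gaussians together with convergence of all pairwise covariances already forces joint Gaussian convergence. This is a genuinely nontrivial structural fact about Wiener chaos, not a consequence of Cram\'er--Wold, and it is the missing ingredient in your outline. Your covariance computations (and \cref{ComputeCov}) feed into that lemma as the ``covariances converge'' hypothesis; they do not by themselves deliver the joint limit.
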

This theorem is proved in Section \ref{Sec:Pure SRD}.

\begin{Eg}
Assume that the auto-covariance function $\gamma(n)\sim n^{2d-1}$ as $n\rightarrow \infty$.
Let $J=2$, $G_1(x)=aH_2(x)+bH_3(x)=bx^3+ax^2-3bx-a$, $G_2(x)=cH_3(x)=cx^3-3cx$ and $0<d<1/4$.
Then in (\ref{sigma_j}),
$\sigma_1^2=2a^2\sum_{n=-\infty}^\infty\gamma(n)^2+6b^2\sum_{n=-\infty}^\infty \gamma(n)^3$, $\sigma_2^2=6c^2\sum_{n=-\infty}^\infty\gamma(n)^3$, and
\[
\left(\frac{1}{N^{1/2}}\sum_{n=1}^{[Nt]}(X_n^2-1) , \frac{1}{N^{1/2}}\sum_{n=1}^{[Nt]}(X_n^3-3X_n)  \right) \ConvFDD
\left(\sigma_1B_1(t),\sigma_2B_2(t)\right),
\]
where the Brownian motions $B_1$ and $B_2$ have the covariance structure:
\[
\mathrm{Cov}\left(B_{1}(t_1),B_{2}(t_2)\right)=6b
\frac{t_1\wedge t_2}{\sigma_{1}\sigma_{2}}\sum_{n=-\infty}^\infty\gamma(n)^3.
\]
$B_1$ and $B_2$ are independent when $b=0$.
\end{Eg}

Next we consider the case where every component $\{G_j(X_n)\}$ of $\mathbf{V}_N(t)$ in (\ref{VectorQuest}) is LRD.

\begin{Thm}\label{Pure LRD}\textbf{(Pure LRD  Case.)}
If the memory parameter $d$ is large enough so that all $G_j(X_n), j=1,\ldots,J$ are LRD, that is,
\[
d>\frac{1}{2}(1-\frac{1}{k_j}), ~j=1,\ldots,J,
\]
then in (\ref{VectorQuest}),
\begin{equation}\label{Pure LRD Limit}
\mathbf{V}_N(t)\overset{f.d.d.}{\longrightarrow}\mathbf{Z}_d^{\mathbf{k}}(t):=\Big(I_{k_1}(f_{k_1,d}^{(t)}),\ldots,I_{k_J}(f_{k_J,d}^{(t)})\Big),
\end{equation}
where the normalization $A_j(N)\propto N^{1+(d-1/2)k_j}L(N)^{k_j/2} $ is such that for $j=1,\ldots,J$,
\begin{equation}\label{e:Norm-LRD}
\lim_{N\rightarrow\infty}\mathrm{Var}\left(\frac{1}{A_j(N)}\sum_{n=1}^{N}G_j(X_n)\right)=1.
\end{equation}
Each component of $\mathbf{Z}_d^\mathbf{k}(t):=\left(Z_d^{(k_1)}(t),\ldots,Z_d^{(k_J)}(t)\right)$ is a standard Hermite process, and $I_{k}(.)$ denotes $k$-tuple Wiener-It\^{o} integral with respect to a common complex Hermitian Gaussian random measure $W$ with Lebesgue control measure, and
\begin{equation}\label{Hermite Kernel}
f_{k,d}^{(t)}(x_1,\ldots,x_k)=b_{k,d}\frac{e^{it(x_1+\ldots+x_k)}-1}{i(x_1+\ldots+x_k)}|x_1|^{-d}\ldots |x_k|^{-d},
\end{equation}
where $b_{k,d}$'s are the same  normalization constants as in \cref{NCLT}.
\end{Thm}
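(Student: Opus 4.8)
The plan is to reduce the assertion to the univariate non-central limit theorem \cref{NCLT} together with a joint-convergence argument carried out in the spectral domain. Since convergence of f.d.d.\ is at stake, fix finitely many times $t_1<\cdots<t_p$; it suffices to prove the joint convergence in distribution of the family $\big(V_{N,j}(t_l)\big)_{1\le j\le J,\ 1\le l\le p}$, and throughout we may absorb each $t_l$ into the kernels introduced below, so we suppress it and argue with a generic $t$. Expanding $G_j=\sum_{m\ge k_j}g_{m,j}H_m$, write $V_{N,j}(t)=W_{N,j}(t)+R_{N,j}(t)$ where
\[
W_{N,j}(t):=\frac{g_{k_j,j}}{A_j(N)}\sum_{n=1}^{[Nt]}H_{k_j}(X_n),\qquad R_{N,j}(t):=\frac{1}{A_j(N)}\sum_{m>k_j}g_{m,j}\sum_{n=1}^{[Nt]}H_m(X_n).
\]
Hermite polynomials of distinct orders are orthogonal, $\mathbb{E}\big[H_m(X_n)H_m(X_{n'})\big]=m!\,\gamma(n-n')^m$, and $|\gamma(h)|\le\gamma(0)=1$, so that $|\gamma(h)|^m\le|\gamma(h)|^{k_j+1}$ for $m>k_j$ and hence
\[
\mathbb{E}\big[R_{N,j}(t)^2\big]=\frac{1}{A_j(N)^2}\sum_{m>k_j}g_{m,j}^2\,m!\sum_{n,n'=1}^{[Nt]}\gamma(n-n')^m\ \le\ \Big(\sum_{m>k_j}g_{m,j}^2\,m!\Big)\,\frac{[Nt]\sum_{|h|<[Nt]}|\gamma(h)|^{k_j+1}}{A_j(N)^2}.
\]
The first factor is bounded by $\|G_j\|_{L^2(\phi)}^2<\infty$, and by the variance asymptotics recalled after \cref{NCLT} (see (\ref{VarGrowthLRD}) and (\ref{VarGrowthSRD}), applied to $H_{k_j+1}$) the numerator of the second factor is regularly varying of strictly smaller index than $A_j(N)^2$. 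Hence $\mathbb{E}\big[R_{N,j}(t)^2\big]\to0$, and it is enough to treat the vector $\big(W_{N,j}(t)\big)_j$.

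Grouping the indices $j$ by the value of $k_j$, all the $W_{N,j}$ sharing a common rank $k$ are asymptotically deterministic multiples of one and the same process $\widetilde A(N)^{-1}\sum_{n=1}^{[Nt]}H_k(X_n)$ (with $\widetilde A(N)\propto N^{1+(d-1/2)k}L(N)^{k/2}$), so the claim reduces to the joint f.d.d.\ convergence, over the finitely many distinct ranks $k^{(1)}<\cdots<k^{(r)}$ that occur, of
\[
\Big(\widetilde A_i(N)^{-1}\sum_{n=1}^{[Nt]}H_{k^{(i)}}(X_n)\Big)_{i=1,\dots,r}\ \ConvFDD\ \big(Z_d^{(k^{(1)})}(t),\dots,Z_d^{(k^{(r)})}(t)\big),
\]
where every Hermite process on the right is built from the \emph{same} Wiener measure. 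By (\ref{Herm<->Int}), each left-hand coordinate equals a multiple Wiener--It\^o integral $I_{k^{(i)}}\big(h^{(t)}_{N,i}\big)$ with $h^{(t)}_{N,i}(x_1,\dots,x_{k^{(i)}})=\widetilde A_i(N)^{-1}\sum_{n=1}^{[Nt]}e^{in(x_1+\cdots+x_{k^{(i)}})}$, all taken with respect to the common spectral random measure $W$ of $\{X_n\}$, whose control measure $F$ has a density regularly varying of index $-2d$ at the origin (the spectral reformulation of (\ref{e:LRDcov})). Performing the change of variables $x\mapsto x/N$ turns these into multiple integrals with respect to a rescaled Gaussian random measure $W_N$ whose control measure converges to a constant multiple of Lebesgue measure, while the rescaled kernels converge, in the relevant $L^2$ spaces, to the Hermite kernels $f^{(t)}_{k^{(i)},d}$ of (\ref{Hermite Kernel}); coordinate by coordinate these are precisely the computations underlying \cref{NCLT} (Dobrushin--Major; Pipiras--Taqqu), and they also fix the constants $b_{k^{(i)},d}$ that yield unit variance.

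The crux, and the step I expect to be the main obstacle, is to upgrade these coordinate-wise limits into joint convergence of a vector of multiple integrals of \emph{different} orders. The fourth-moment / Peccati--Tudor machinery is of no use here, since the limits are non-Gaussian whenever $k^{(i)}\ge2$; instead I would prove and record as a lemma a multivariate continuity theorem for multiple Wiener--It\^o integrals driven by a converging sequence of Gaussian random measures. The point is that one and the same sequence $W_N$ drives all $r$ coordinates, and that each coordinate is an $L^2$-limit of a polynomial in the values of $W_N$ on a common fixed finite family of bounded Borel sets; therefore the joint convergence of $\big(W_N(A_1),\dots,W_N(A_m)\big)$ to the corresponding Gaussian vector --- a consequence of the convergence of the control measures --- together with an $L^2$ approximation of the kernels by simple functions and a Slutsky-type argument absorbing the $N$-dependence of the kernels, delivers the asserted joint convergence. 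The limiting vector then has the stated marginals $Z_d^{(k_j)}(t)$, and the mutual dependence of these marginals requires no separate argument, as they are multiple integrals of the single underlying measure $W$; reassembling the rank groups by linearity of the Hermite expansion recovers $\mathbf{V}_N(t)\ConvFDD\mathbf{Z}_d^{\mathbf{k}}(t)$ with the normalizations $A_j(N)$ of \cref{NCLT} applied to each leading term.
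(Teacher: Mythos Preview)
Your proposal is correct and follows essentially the same route as the paper: the reduction to the leading Hermite term via the $L^2$ remainder estimate is the paper's \cref{ReductNCLT}, and the ``multivariate continuity theorem for multiple Wiener--It\^o integrals driven by a converging sequence of Gaussian random measures'' that you single out as the crux is exactly the paper's \cref{ConvLemma}, proved by the same simple-function approximation and joint convergence of $\big(W_{F_N}(A_i)\big)_i$ that you describe. The only cosmetic difference is that the paper phrases the kernel convergence as uniform-on-compacts plus a uniform integrability condition (\ref{Integrability}) rather than $L^2$ convergence, and does not bother grouping indices by rank; neither point affects the argument.
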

This theorem is proved in Section \ref{Sec:Pure LRD}.

\begin{Eg}
Assume that auto-covariance function $\gamma(n)\sim n^{2d-1}$ as $n\rightarrow \infty$.
Let $J=2$, $G_1(x)=H_1(x)=x$,
 $G_2(x)=H_2(x)=x^2-1$, $1/4<d<1/2$, then
\[
\left(\frac{1}{N^{1/2+d}}\sum_{n=1}^{[Nt]}X_n , \frac{1}{N^{2d}}\sum_{n=1}^{[Nt]}(X_n^2-1)  \right) \ConvFDD
\left(\frac{1}{d(2d+1)}Z_d^{(1)}(t),\frac{1}{d(4d-1)}Z_d^{(2)}(t)\right),
\]
where the standard fractional Brownian motion  $Z_d^{(1)}(t)$ and standard Rosenblatt process $Z_d^{(2)}(t)$ share the same random measure in the Wiener-It\^{o} integral representation. The components $Z_d^{(1)}$ and $Z_d^{(2)}$ are uncorrelated but dependent as stated below.
\end{Eg}

In \cref{Pure LRD}, the marginal Hermite processes $Z_d^{(k_1)}(t)=I_{k_1}(f_{k_1,d}^{(t)}),\ldots,Z_d^{(k_J)}(t)=I_{k_J}(f_{k_J,d}^{(t)})$ are dependent on each other.
To prove this, we use a different representation of the Hermite process, namely, the positive half-axis representation given in (\ref{eq:PosAxiRep}).

\begin{Pro}\label{Pro:HermDep}
The marginal Hermite processes $Z_d^{(k_1)},\ldots,Z_d^{(k_J)}$ involved in \cref{Pure LRD} are dependent.
\end{Pro}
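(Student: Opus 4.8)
The plan is to show that the multivariate Hermite process $\mathbf{Z}_d^{\mathbf{k}}$ cannot have independent components by exhibiting a single nonlinear functional of the joint law whose value differs from what independence would force. The cleanest such functional is a mixed fourth moment (equivalently, a fourth joint cumulant). Concretely, pick two indices $j_1 \neq j_2$ with Hermite orders $p := k_{j_1}$ and $q := k_{j_2}$, and consider $\mathbb{E}\big[Z_d^{(p)}(1)^2 Z_d^{(q)}(1)^2\big]$. If $Z_d^{(p)}(1)$ and $Z_d^{(q)}(1)$ were independent, this would equal $\mathbb{E}[Z_d^{(p)}(1)^2]\,\mathbb{E}[Z_d^{(q)}(1)^2] = 1$ (unit variance). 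So it suffices to show this mixed moment is strictly greater than $1$, i.e. that the fourth cumulant $\mathrm{cum}(Z_d^{(p)}(1), Z_d^{(p)}(1), Z_d^{(q)}(1), Z_d^{(q)}(1))$ is strictly positive.

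To compute this, I would switch to the positive half-axis (time-domain) representation of the Hermite processes announced in \eqref{eq:PosAxiRep}, in which all the $Z_d^{(k)}$ are multiple Wiener–It\^o integrals $I_p(h_p)$, $I_q(h_q)$ against a \emph{common} real Brownian random measure, with kernels $h_p, h_q$ that are explicit products of powers $(s-u)_+^{d-1}$. The advantage is that the diagram/product formula for products of multiple Wiener–It\^o integrals over a common random measure then gives $\mathrm{cum}(I_p(h_p),I_p(h_p),I_q(h_q),I_q(h_q))$ as a finite sum of ``circular'' contraction integrals, each of the form $\langle$ a chain of $h_p$'s and $h_q$'s contracted cyclically $\rangle$. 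The key structural point is that, because every kernel factor $(s-u)_+^{d-1}$ is \emph{nonnegative} (for $0<d<1/2$ the exponent $d-1$ is negative but the function is still positive on its support) and every contraction integrates a product of such nonnegative factors against Lebesgue measure, every term in this cumulant sum is $\geq 0$, and at least one term (e.g. the full $p{+}q{+}p{+}q$ cyclic contraction, or the minimal one using $\min(p,q)$ pairings between the two groups) is \emph{strictly} positive because the integrand is positive on a set of positive measure. Hence the cumulant is strictly positive, the mixed fourth moment exceeds $1$, and independence fails.

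The main obstacle I anticipate is bookkeeping in the product formula: the fourth-order cumulant of two squared multiple integrals of possibly different orders $p\neq q$ unfolds into many contraction terms indexed by the admissible pairings of the $2p+2q$ integration variables into a connected diagram, and one must (i) verify that the connectedness (cumulant) constraint still leaves at least one nonvanishing term, and (ii) confirm that no term carries a negative sign — the latter is automatic here precisely because the half-axis kernels are sign-definite, which is the whole reason for choosing that representation rather than the spectral one. A secondary technical point is checking integrability of each contraction (finiteness of the positive integrals), but this follows from the same power-counting that guarantees $Z_d^{(k)}(1)\in L^2$, together with the fact that contractions of $L^2$ kernels of Hermite type remain integrable in the LRD regime $\tfrac12(1-\tfrac1{k})<d<\tfrac12$. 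Once positivity of one term and nonnegativity of all others are in hand, the conclusion that $Z_d^{(k_1)},\dots,Z_d^{(k_J)}$ are (pairwise, hence jointly) dependent is immediate. If one prefers to avoid diagram combinatorics entirely, an alternative is to note that independence of two multiple Wiener–It\^o integrals over a common Gaussian measure would force, by the Nualart–Peccati/Ustunel–Zakai type criterion, that a certain contraction kernel $h_p \otimes_1 h_q$ vanish identically; but this contraction is again an integral of strictly positive half-axis kernels and so is not zero, giving the same contradiction.
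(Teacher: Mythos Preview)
Your proposal is correct, and in fact the alternative you sketch in the final sentence is exactly the paper's proof: it invokes the Ustunel--Zakai criterion (independence of $I_p(g_1)$ and $I_q(g_2)$ iff $g_1\otimes_1 g_2=0$ a.e.\ in $L^2$), passes to the positive half-axis representation \eqref{eq:PosAxiRep}, and observes that the single contraction $g_{p,d}\otimes_1 g_{q,d}$ is an integral of a strictly positive integrand and hence nonzero.

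Your primary route via the mixed fourth moment also works---the diagram formula for $\kappa_4\big(I_p(h_p),I_p(h_p),I_q(h_q),I_q(h_q)\big)$ over a real Gaussian measure does yield a sum of connected-diagram integrals carrying no signs, each nonnegative when the kernels are nonnegative, with at least one strictly positive term (a connected pairing on rows of sizes $p,p,q,q$ certainly exists). But this forces you to manage the full combinatorics of admissible connected pairings and to argue finiteness term by term, whereas the Ustunel--Zakai route collapses the entire question to the nonvanishing of \emph{one} contraction $h_p\otimes_1 h_q$. So the ``alternative'' you flagged is both sufficient and by far the shortest path; the cumulant computation is a valid detour that buys nothing extra here.
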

\begin{proof}
From  \cite{ustunel1989independence}, we have the following criterion for  the independence of multiple Wiener-It\^o integrals:
suppose that symmetric $g_1\in L^2(\mathbb{R}^p_{+})$ and $g_2\in L^2(\mathbb{R}^q_{+})$.
 Then $I_p(g_1)$ and $I_q(g_2)$ ($p,q\ge 1$) are independent if and only if
\[
g_1\otimes_1 g_2:=\int_{\mathbb{R}_+}g_1(x_1,\ldots,x_{p-1},u)g_2(x_p,\ldots,x_{p+q-2},u)du =0 \text{ in } L^2(\mathbb{R}_+^{p+q-2}).
\]
We shall apply this criterion to the positive half-axis integral representation (\ref{eq:PosAxiRep}) of Hermite processes (see also \cite{pipiras2010regularization}):
\[
Z_d^{(k)}(t)=c_{k,d}I_k\left(g_{k,d}^{(t)}(x_1,\ldots,x_k)\right):=c_{k,d}\int'_{\mathbb{R}_+^k}\left[\int_0^t\prod_{j=1}^kx_j^{-d}(1-sx_j)_+^{d-1}ds\right]dB(x_1)\ldots dB(x_k),
\]
where $B$ is Brownian motion, the prime $'$ indicates the exclusion of diagonal $x_j=x_k, j\neq k$ and $c_{k,d}$ is some normalization constant. In fact, for a vector made up of Hermite processes sharing the same random measure in their Wiener-It\^o integral representation, the joint  distribution does not change when switching from one representation of Hermite process to another. See Appendix \ref{sec:JointInvar}.

One can then see (let $t=1$ and thus $g_{k,d}:=g_{k,d}^{(1)}$ for simplicity) that for all $(x_1,\ldots,x_{p+q-2})\in \mathbb{R}_+^{p+q-2}$:
\begin{align*}
 &(g_{p,d}\otimes_1g_{q,d})(x_1,\ldots,x_{p+q-2})\\
=&\int_{\mathbb{R}_+}\left(\int_0^1 \prod_{j=1}^{p-1}x_j^{-d}(1-sx_j)_+^{d-1} u^{-d}(1-su)_+^{d-1}ds\int_0^1 \prod_{j=p}^{p+q-2}x_j^{-d}(1-sx_j)_+^{d-1} u^{-d}(1-su)_+^{d-1}ds \right) du>0
\end{align*}
because every term involved in the integrand is positive.
\end{proof}

\cref{Pure SRD} and \cref{Pure LRD} describe the convergence of $\mathbf{V}_N(t)$ in (\ref{VectorQuest}) when the $\{G_j(X_n)\}$, $j=1,\ldots,J$ are all purely SRD or purely LRD. However, when the components in $\mathbf{V}_N(t)$ are mixed, that is, some of them are SRD and some of them are LRD, it is not immediately clear what the limit behavior is and also what the inter-dependence structure between the SRD and LRD limit components is.
We show that the SRD part and LRD part are asymptotically independent so that one could join the limits of \cref{Pure SRD} and \cref{Pure LRD} together, in the case when the $G_j$'s in the LRD part only involve the 2 lowest Hermite ranks, namely, $k=1$ or $k=2$. This is stated in the next theorem where the letter ``S'' refers to the SRD part and ``L'' to the LRD part.

\begin{Thm}\label{SRD&LRD}\textbf{(Special SRD and LRD Mixed Case.)}
Separate the SRD and LRD parts of $\mathbf{V}_N(t)$ in (\ref{VectorQuest}), that is, let
$\mathbf{V}_N(t)=\left(\mathbf{S}_N(t),\mathbf{L}_N(t)\right)$, where
\begin{align}
\mathbf{S}_N(t)&=\left(\frac{1}{A_{1,S}(N)}\sum_{n=1}^{[Nt]}G_{1,S}(X_n),\ldots,\frac{1}{A_{J_S,S}(N)}\sum_{n=1}^{[Nt]}G_{J_S,S}(X_n)\right),\label{SRD Part}\\
\mathbf{L}_N(t)&=\left(\frac{1}{A_{1,L}(N)}\sum_{n=1}^{[Nt]}G_{1,L}(X_n),\ldots,\frac{1}{A_{J_L,L}(N)}\sum_{n=1}^{[Nt]}G_{J_L,L}(X_n)\right),\label{LRD Part}
\end{align}
where $G_{j,S}$ has Hermite rank $k_{j,S}$, and  $G_{j,L}$ has  Hermite rank $k_{j,L}$, $A_{j,S}\propto N^{1/2}$ and\\ $A_{j,L}\propto N^{1+(d-1/2)k_{j,L}}L(N)^{k_{j,L}/2}$ are the correct normalization factors such that for $j=1,\ldots,J_S$ and $j=1,\ldots,J_L$ respectively,
\begin{equation}\label{e:Norm-SRD&LRD}
\lim_{N\rightarrow\infty}\mathrm{Var}\left(\frac{1}{A_{j,S}(N)}\sum_{n=1}^{N}G_{j,S}(X_n)\right)=1, \quad
\lim_{N\rightarrow\infty}\mathrm{Var}\left(\frac{1}{A_{j,L}(N)}\sum_{n=1}^{N}G_{j,L}(X_n)\right)=1.
\end{equation}
In addition,
\begin{equation}\label{Mixed Condition}
\frac{1}{2}(1-\frac{1}{k_{j_L,L}})<d<\frac{1}{2}(1-\frac{1}{k_{j_S,S}})\quad\text{for all  } j_S=1,\ldots,J_S,~j_L=1,\ldots,J_L,
\end{equation}
where we allow arbitrary values for $k_{j,S}$ but only $k_{j,L}=1$ or $2$.
(Condition (\ref{Mixed Condition}) makes all $\{G_{j,S}(X_n)\}$ SRD and all $\{G_{j,L}(X_n)\}$ LRD.)

Then we have
\begin{equation}\label{BM FBM Limit}
(\mathbf{S}_N(t), \mathbf{L}_N(t))\ConvFDD (\mathbf{B}(t),\mathbf{Z}_d^{(\mathbf{k}_L)}(t)),
\end{equation}
where the multivariate Gaussian process $\mathbf{B}(t)$ is given in (\ref{Pure SRD}) and the multivariate standard Hermite process $\mathbf{Z}_d^{(\mathbf{k}_L)}(t)$ is given in (\ref{Pure LRD}). Moreover, the vectors $\mathbf{B}(t)$ and $\mathbf{Z}_d^{(\mathbf{k}_L)}(t)$ are independent.
\end{Thm}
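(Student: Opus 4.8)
The marginal convergences $\mathbf S_N\ConvFDD\mathbf B$ and $\mathbf L_N\ConvFDD\mathbf Z_d^{(\mathbf k_L)}$ are already provided by \cref{Pure SRD} and \cref{Pure LRD}, so the whole content of the theorem is the \emph{joint} convergence and the \emph{independence} of the two limiting vectors. The plan is to reduce each block to a finite sum of multiple Wiener--It\^o integrals and then apply the Nourdin--Rosinski asymptotic independence criterion of Appendix \ref{sec:AsympIndep}. For an LRD component $G_{j,L}=\sum_{m\ge k_{j,L}}g_{m,j,L}H_m$ with $k_{j,L}\in\{1,2\}$, the chaos-$m$ part with $m>k_{j,L}$ has, after normalization by $A_{j,L}(N)$, variance of strictly smaller polynomial order (an elementary comparison of the orders $(2d-1)m+2$, $1$ and $(2d-1)k_{j,L}+2$, using $0<d<1/2$ and the fact that $\{G_{j,L}(X_n)\}$ is LRD), hence is $o_P(1)$ and may be discarded; thus $L_{N,j}(t)$ is asymptotically the single integral $I_{k_{j,L}}(\cdot)$ of order $1$ or $2$. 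For an SRD component $G_{j,S}=\sum_{m\ge k_{j,S}}g_{m,j,S}H_m$ (note $k_{j,S}\ge 2$ since $d>0$), since $\sum_n|\gamma(n)|^m\le\sum_n|\gamma(n)|^{k_{j,S}}<\infty$ for $m\ge k_{j,S}$, the tail $\sum_{m>M}g_{m,j,S}H_m$, summed and divided by $A_{j,S}(N)\propto N^{1/2}$, has variance at most $\sum_{m>M}g_{m,j,S}^2 m!\sum_n|\gamma(n)|^m$ uniformly in $N$, which is small for $M$ large; by the usual $\varepsilon/3$ argument it suffices to prove the theorem with each $G_{j,S}$ replaced by its Hermite truncation $\sum_{m=k_{j,S}}^{M}g_{m,j,S}H_m$, a finite sum of integrals $I_m(\cdot)$ with $2\le m\le M$.

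After this reduction every component at every fixed time is a finite sum of multiple integrals, and the chaos orders present in the SRD block are disjoint from those in the LRD block: condition (\ref{Mixed Condition}) forces $d>1/4$ as soon as some $k_{j,L}=2$, and then $d<\tfrac12(1-\tfrac1{k_{j,S}})$ gives $k_{j,S}\ge 3$, while the LRD block occupies only chaoses $1$ and $2$; and if every $k_{j,L}=1$, the LRD block sits in the first chaos alone and the SRD block in chaoses $\ge 2$. Consequently every SRD integral $I_p(f_N)$ and every LRD integral $I_q(g_N)$ satisfy $p>q\in\{1,2\}$, so they are automatically uncorrelated, and by the Nourdin--Rosinski criterion the asymptotic independence of the two blocks follows once one shows that the contractions $f_N\otimes_r g_N\to 0$ in $L^2$ for $r=1$ and, when $q=2$, also $r=2$ --- equivalently, that $\mathrm{Cov}\big(I_p(f_N)^2,I_q(g_N)^2\big)\to 0$ for every such pair (allowing $f_N$ and $g_N$ to carry different time arguments, which causes no extra difficulty).

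This last point is the crux, and the expected main obstacle. Expanding $\mathrm{Cov}(I_p(f_N)^2,I_q(g_N)^2)$ by the diagram/product formula yields $A_{j_S,S}(N)^{-2}A_{j_L,L}(N)^{-2}$ times a sum over $a,b\le[Nt]$, $c,d\le[Nt']$ of connected-diagram terms of the form $|\gamma(a-b)|^{p-\ell}\,|\gamma(c-d)|^{q-\ell}$ times a product of ``cross'' factors drawn from $|\gamma(a-c)|,|\gamma(a-d)|,|\gamma(b-c)|,|\gamma(b-d)|$, where the number of cross edges is $2\ell$ with $1\le\ell\le q\le 2$; since $q\le 2$ there are only finitely many diagram shapes. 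Summing the LRD indices $c,d$ contributes factors of size $O(N^{2d})$ (a single $|\gamma|$ over a fresh index) or $O(\sum_{|m|\le N}\gamma(m)^2)$ (a squared or convolved factor), and the residual sum $\sum_{a,b\le N}|\gamma(a-b)|^{p-\ell}$ is $O(N)$ when $\sum_n|\gamma(n)|^{p-\ell}<\infty$ and $O\big(N^{2-(1-2d)(p-\ell)}\big)$ otherwise. Comparing the resulting bound against $A_{j_S,S}(N)^2A_{j_L,L}(N)^2\asymp N\cdot N^{(2d-1)q+2}$ (up to slowly varying factors), each shape reduces to the single inequality $p\ge k_{j,S}>\tfrac1{1-2d}$ --- exactly the SRD hypothesis --- together with $d<1/2$, and the strictness produces the $o(1)$ bound. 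The restriction $q\le 2$ is precisely what makes this feasible: it caps $\ell$ at $2$ and hence limits the diagrams to this small list; for $q\ge 3$ one must additionally control the contractions $\otimes_3,\dots$, whose power counting is markedly more delicate, which is why case (c) for arbitrary Hermite ranks is left as a conjecture.

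With the contractions verified, the Nourdin--Rosinski theorem shows that the reduced joint vector converges in f.d.d.\ to the independent coupling of its SRD and LRD marginal limits. Finally, letting $M\to\infty$ and invoking the uniform tail bound of the first step removes the truncation and identifies the limits as $\mathbf B(t)$ (with covariance (\ref{CovStruct BM}) restricted to the SRD components) and $\mathbf Z_d^{(\mathbf k_L)}(t)$, which establishes (\ref{BM FBM Limit}) together with the independence of $\mathbf B$ and $\mathbf Z_d^{(\mathbf k_L)}$.
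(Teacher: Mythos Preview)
Your overall architecture---reduce the LRD block to its leading Hermite term, truncate the SRD block to a finite Hermite sum, observe that the resulting chaos orders are disjoint ($p>q$), and then appeal to the Nourdin--Rosinski criterion of Appendix~\ref{sec:AsympIndep}---is exactly the paper's route. Where you diverge is in the verification of the contraction condition $\|f_N\otimes_r g_N\|\to 0$, and there the paper's argument is considerably shorter than your diagram power-counting. The paper (see the proof of \cref{t:B2}) uses the identity $\|f\otimes_r g\|^2=\langle f\otimes_{p-r}f,\,g\otimes_{q-r}g\rangle$ together with Cauchy--Schwarz, so that it suffices to show $\|f_N\otimes_{p-r}f_N\|\to 0$ for the \emph{SRD} kernel alone; but this is an immediate consequence of the Nualart--Peccati fourth-moment theorem, since $I_p(f_N)$ is already known to converge to a Gaussian. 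No diagrams, no case splitting on $q$, and---crucially---no restriction on $q$ is needed for this step: the contraction condition holds for \emph{every} $k_{j,L}\ge 1$.

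This brings up a genuine conceptual error in your proposal. You locate the obstruction to general $k_{j,L}$ in the combinatorics of the contractions (``for $q\ge 3$ one must additionally control $\otimes_3,\dots$, whose power counting is markedly more delicate''). That is not where the difficulty lies. The contraction/asymptotic-moment-independence step goes through for arbitrary $k_{j,L}$ via the Nualart--Peccati argument above. The restriction $k_{j,L}\in\{1,2\}$ enters only at the very last step of \cref{AsympIndep}/\cref{CorForAsympIndep}: to upgrade asymptotic \emph{moment}-independence to genuine independence one needs the marginal laws of the LRD limit to be moment-determinate, and this is known only for the Gaussian ($k=1$) and Rosenblatt ($k=2$) distributions. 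Your write-up never mentions moment-determinacy, so as stated the invocation of ``the Nourdin--Rosinski theorem'' is incomplete, and your explanation of why the conjecture remains open is pointing at the wrong bottleneck.
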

This theorem is proved in Section \ref{Sec:SRD&LRD}. Observe that while $\mathbf{B}(t)$ is made up of correlated Brownian motions, it follows from \cref{SRD&LRD} that if $\mathbf{Z}_d^{(k)}(t)$ contains fractional Brownian motion as a component, then the fractional Brownian motion will be independent of any Brownian motion component of $\mathbf{B}(t)$.

We conjecture the following:
\begin{Con}
\cref{SRD&LRD} holds without the restriction that $k_{j,L}$ be $1$ or $2$.
\end{Con}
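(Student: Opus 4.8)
The plan is to run the proof of \cref{SRD&LRD} with LRD components of arbitrary Hermite rank, changing nothing in the logical structure and isolating the one family of analytic estimates that currently forces $k_{j,L}\in\{1,2\}$. \textbf{Step 1 (reduction to finitely many chaoses).} Expand $G_{j,S}=\sum_{m\ge k_{j,S}}g_{m,j,S}H_m$ and $G_{j,L}=\sum_{m\ge k_{j,L}}g_{m,j,L}H_m$. As in the univariate theory, the normalizations \eqref{e:Norm-SRD&LRD} and the variance asymptotics \eqref{VarGrowthSRD}--\eqref{VarGrowthLRD} give that the tail $\sum_{m>M}g_{m,j,S}H_m$ of each SRD component has summed variance at most $\varepsilon(M)\,A_{j,S}(N)^2$ with $\varepsilon(M)\to0$ uniformly in $N$, and that in each LRD component every term $g_{m,j,L}H_m$ with $m>k_{j,L}$ is negligible relative to $A_{j,L}(N)$ --- it is either LRD with the slower rate $N^{1+(d-1/2)m}=o(N^{1+(d-1/2)k_{j,L}})$ (since $d<\tfrac12$) or SRD with rate $O(N^{1/2})$, and $A_{j,L}(N)$ dominates $N^{1/2}$ because \eqref{Mixed Condition} forces $1+(d-1/2)k_{j,L}>\tfrac12$. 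Since \eqref{Mixed Condition} also forces $k_{j,L}<k_{i,S}$ for all $i,j$, after this reduction the SRD block lives in chaoses of orders $m\ge\min_i k_{i,S}$ and the LRD block in chaoses of orders $k_{j,L}$ only, disjoint index sets; in particular the two blocks are exactly orthogonal at every $N$. So it suffices to treat each SRD component as a finite sum $\sum_{k_{j,S}\le m\le M}g_{m,j,S}H_m$ and each LRD component as $g_{k_{j,L},j,L}H_{k_{j,L}}$, and then to let $M\to\infty$.

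\textbf{Step 2 (one vector of multiple integrals and the asymptotic independence criterion).} Writing each $H_m(X_n)$ as the $m$-fold Wiener--It\^o integral with kernel $(x_1,\dots,x_m)\mapsto e^{in(x_1+\cdots+x_m)}$, after Step 1 the pair $(\mathbf{S}_N(t),\mathbf{L}_N(t))$ is a finite vector of multiple integrals $I_m(h^{(t)}_{m,N})$, $h^{(t)}_{m,N}(x_1,\dots,x_m)=A(N)^{-1}\sum_{n=1}^{[Nt]}e^{in(x_1+\cdots+x_m)}$, all with respect to the \emph{same} spectral random measure $W$. By \cref{Pure SRD} the SRD block converges to the Gaussian vector $\mathbf{B}(t)$, and by \cref{Pure LRD} the LRD block converges to $\mathbf{Z}_d^{(\mathbf{k}_L)}(t)$. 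By the Nourdin--Rosinski asymptotic independence theorem \cite{nourdin2012asymptotic} (see Appendix \ref{sec:AsympIndep}), since both blocks are vectors of multiple integrals converging in law, one gets the joint convergence $(\mathbf{S}_N,\mathbf{L}_N)\ConvFDD(\mathbf{B},\mathbf{Z}_d^{(\mathbf{k}_L)})$ with \emph{independent} limits as soon as all cross-contractions vanish:
\[
\| h^{(t)}_{m,N}\otimes_r \widetilde h^{(s)}_{k,N} \|_{L^2}\longrightarrow 0\qquad(1\le r\le k),
\]
for every SRD order $m$ and every LRD order $k=k_{j,L}$ (equivalently $\mathrm{Cov}((S^{(i)}_N)^2,(L^{(j)}_N)^2)\to0$ for all $i,j$); here $r$ runs only up to $k$, not up to $\min(m,k)$, because $k<m$ after Step 1.

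\textbf{Step 3 (where $k\le2$ is used).} When $k=k_{j,L}\in\{1,2\}$ the only contractions are the ``boundary'' ones $r=1$ and, if $k=2$, $r=k=2$; these reduce to kernels carrying one, respectively two, explicit autocovariance factors, and are controlled by the regularly-varying power counting already used to prove \cref{Pure SRD} and \cref{Pure LRD}. This restricted range $r\in\{1,2\}$ is precisely what the hypothesis $k_{j,L}\in\{1,2\}$ buys: it leaves no \emph{intermediate} contractions to estimate. To prove the conjecture one must in addition bound $h^{(t)}_{m,N}\otimes_r \widetilde h^{(s)}_{k,N}$ for $2\le r\le k-1$ with $k\ge3$, where $r$ of the $k$ ``LRD variables'' are integrated out while $k-r\ge1$ survive, so the contracted kernel still carries the near-zero-frequency $|x|^{-d}$-type behaviour of the Hermite kernel $f^{(t)}_{k,d}$ in \eqref{Hermite Kernel} while simultaneously coupling to the SRD frequencies.

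\textbf{Main obstacle.} The intermediate contractions are the crux, and I expect them to be the genuinely hard step. Their squared $L^2$-norms are mixed sums of autocovariance powers, schematically $A_S(N)^{-2}A_L(N)^{-2}\sum\gamma^r\gamma^r\gamma^{m+k-2r}$, in which some powers of $\gamma$ are summable and others are not, and the two normalizations $N^{1/2}$ and $N^{1+(d-1/2)k}L(N)^{k/2}$ grow at different rates, so one cannot simply dominate the cross term by a product of the marginal SRD and LRD bounds; what is needed is a self-contained estimate showing that each such sum is $o(A_S(N)^2A_L(N)^2)$ under \eqref{Mixed Condition}. I would attack this by splitting the spectral domain into a shrinking neighbourhood of the origin, where the LRD kernels concentrate, and its complement, and running the Dobrushin--Major and Breuer--Major power-counting bookkeeping on both blocks simultaneously on each piece. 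A possibly cleaner alternative is to drop the contraction criterion and argue via the diagram formulas for joint cumulants: it would suffice to show that every diagram containing an edge joining an $\mathbf{S}$-block of vertices to an $\mathbf{L}$-block of vertices contributes $o(1)$ after normalization; the combinatorics is heavier, but the SRD and LRD contributions decouple more transparently and the decisive input is again the same near-the-origin power counting.
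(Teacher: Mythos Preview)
The statement is a \emph{conjecture}; the paper does not prove it. Your proposal misidentifies where the difficulty lies.

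You claim the restriction $k_{j,L}\in\{1,2\}$ is imposed so that only the ``boundary'' cross-contractions $r=1$ and $r=k$ need estimating, and that the open problem is to bound the ``intermediate'' contractions $2\le r\le k-1$ for $k\ge 3$. This is not so. Read the proof of \cref{ExtenBiAsymp} (equivalently Theorem~\ref{t:B2} in Appendix~\ref{sec:AsympIndep}): the cross-contraction $\|f_{i,S,N}\otimes_r f_{j,L,N}\|$ is handled for \emph{every} $r=1,\dots,k_{j,L}$ simultaneously, via the identity $\|f\otimes_r g\|^2=\langle f\otimes_{p-r}f,\,g\otimes_{q-r}g\rangle$ and Cauchy--Schwarz. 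The factor $\|f_{i,S,N}\otimes_{k_{i,S}-r} f_{i,S,N}\|$ tends to zero because the SRD component $I_{k_{i,S}}(f_{i,S,N})$ converges to a Gaussian, and by the Nualart--Peccati criterion this forces \emph{all} of its nontrivial self-contractions to vanish. Nothing in this argument uses $k_{j,L}\le 2$; the contraction condition \eqref{Contraction2Zero} is already verified in full generality.

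The actual obstacle is the one the paper names explicitly in the paragraph following the conjecture: \cref{AsympIndep} deduces from the contraction condition only \emph{moment}-independence of the limits, i.e.\ \eqref{MomentAsympIndep}. To upgrade this to genuine independence one must know that each marginal of $\mathbf{Z}_d^{(\mathbf{k}_L)}$ is moment-determinate. For $k=1$ (Gaussian) and $k=2$ (Rosenblatt, which has an analytic characteristic function) this is known; for $k\ge 3$ the moment problem for the Hermite distribution $Z_d^{(k)}(1)$ is open. Your Step~2 elides this by asserting that vanishing cross-contractions alone yield independent limits, but the theorem you invoke does not say that. Consequently the power-counting and diagram-formula program you sketch under ``Main obstacle'' is aimed at a non-issue; the genuine question is purely distributional---whether higher-order Hermite laws are determined by their moments---and neither contraction estimates nor cumulant combinatorics address it.
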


\begin{Eg}
Assume that the auto-covariance function $\gamma(n)\sim n^{2d-1}$ as $n\rightarrow\infty$.
Let $J=2$, $G_1(x)=H_2(x)=x^2-1$,
 $G_2(x)=H_3(x)=x^3-3x$, $1/4<d<1/3$,
 then $\sigma^2=6\sum_{n=-\infty}^\infty\gamma(n)^3$ and
\[
\left(\frac{1}{N^{2d}}\sum_{n=1}^{[Nt]}(X_n^2-1) , \frac{1}{N^{1/2}}\sum_{n=1}^{[Nt]}(X_n^3-3X_n)  \right) \ConvFDD
\left(\frac{1}{d(4d-1)}Z_d^{(2)}(t),
\sigma B(t)\right).
\]
where the standard Rosenblatt process  $Z_d^{(2)}(t)$ and the standard Brownian motion $B(t)$ are independent.
\end{Eg}

The proof of \cref{SRD&LRD} is based a recent result in \cite{nourdin2012asymptotic} which characterizes the asymptotic moment-independence of series of multiple Wiener-It\^{o} integral vectors. We also note that in Proposition 5.3 (2) of \cite{nourdin2012asymptotic} , a special case of \cref{SRD&LRD} with $J_S=J_L=1$ and LRD part involving Hermite rank $k_{1,L}=2$ is treated.
To go from moment-independence to independence, however, requires moment-determinancy of the limit, which we know holds when the Hermite rank $k=1,2$, that is, in the Gaussian and Rosenblatt  cases. If some other \emph{Hermite distribution} (marginal distribution of Hermite process) $Z_d^{(k)}$ ($k\ge 3$) is moment-determinate, then we will allow $k_{j,L}=k$ in \cref{SRD&LRD}. So to this end, the moment-problem of general Hermite distributions is of great interest.

\begin{Rem}
As mentioned in \cref{summability of autocov}, the border case $d_j=\frac{1}{2}(1-\frac{1}{k_j})$ often leads to convergence to Brownian motion as well. In fact, \cref{Pure SRD} and \cref{SRD&LRD} continue to hold if we extend the definition of SRD to the case whenever the limit is Brownian motion regardless of the normalization.
\end{Rem}

In \cref{Pure SRD}, \cref{Pure LRD} and \cref{SRD&LRD} we stated the results only in terms of convergence in finite-dimensional distributions, but in fact they hold under weak convergence in $D[0,1]^J$ (J-dimensional product space where $D[0,1]$ is the space of C\`adl\`ag functions on $[0,1]$ with the uniform metric). If one can check that every component of $\mathbf{V}_N(t)$ is tight, then the vector $\mathbf{V}_N(t)$ is tight:
\begin{Lem}
Univariate tightness in $D[0,1]$ implies multivariate tightness in $D[0,1]^J$.
\end{Lem}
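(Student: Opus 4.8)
The plan is to show that joint tightness in the product space $D[0,1]^J$ (with the sup metric on each coordinate) follows from coordinatewise tightness, using the standard compactness characterization of tightness via Prohorov's theorem. The key structural fact is that a subset of the product space $D[0,1]^J$ is relatively compact if and only if its projection onto each of the $J$ coordinates is relatively compact; this is just the statement that a product of relatively compact sets is relatively compact (Tychonoff), combined with the fact that $D[0,1]^J$ carries the product topology of the $J$ copies of $(D[0,1],\|\cdot\|_\infty)$.

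First I would recall that, since each $(D[0,1],\|\cdot\|_\infty)$ is a metric space, Prohorov's theorem applies: a family of laws on $D[0,1]$ is tight if and only if it is relatively compact in the weak topology. So, assuming univariate tightness, fix $\varepsilon>0$ and for each $j=1,\dots,J$ choose a compact set $K_j\subset D[0,1]$ with $\mathbb{P}\big(V_{N,j}\notin K_j\big)<\varepsilon/J$ for all $N$. Then set $K=K_1\times\cdots\times K_J\subset D[0,1]^J$, which is compact in the product space by Tychonoff's theorem (a finite product of compacts is compact). By a union bound,
\[
\mathbb{P}\big(\mathbf{V}_N\notin K\big)\le \sum_{j=1}^{J}\mathbb{P}\big(V_{N,j}\notin K_j\big)<\varepsilon
\]
uniformly in $N$. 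Since $\varepsilon>0$ was arbitrary, the family $\{\mathbf{V}_N\}$ is tight in $D[0,1]^J$.

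The only point requiring a little care is that the metric on $D[0,1]^J$ inducing the product topology — for instance $\rho(\mathbf{x},\mathbf{y})=\max_{j}\|x_j-y_j\|_\infty$ — is the one under which we want to assert weak convergence, and that the coordinate projections $D[0,1]^J\to D[0,1]$ are continuous with respect to it; both are immediate. I expect the main (and essentially only) obstacle to be purely expository: making sure the ambient topology on the product space is spelled out so that the elementary Tychonoff/union-bound argument is clearly applicable, since no genuine analytic difficulty arises. Once tightness is in hand, f.d.d.\ convergence (already established in \cref{Pure SRD}, \cref{Pure LRD}, \cref{SRD&LRD}) upgrades to weak convergence in $D[0,1]^J$ in the usual way.
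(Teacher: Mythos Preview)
Your proposal is correct and essentially identical to the paper's proof: both choose compact sets $K_j$ coordinatewise, form the product $K=K_1\times\cdots\times K_J$ (compact in the product metric $\max_j\|x_j-y_j\|_\infty$), and apply a union bound to get $\mathbb{P}(\mathbf{V}_N\notin K)<\varepsilon$. The only cosmetic differences are that you normalize by $\varepsilon/J$ upfront (the paper ends with $J\epsilon$) and you mention Prohorov's theorem in passing, though you do not actually use it---the argument proceeds directly from the definition of tightness.
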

\begin{proof}
Suppose every component $X_{j,N}$ (a random element in $S=D[0,1]$ with uniform metric $d$) of the J-dimensional random element $\mathbf{X}_N$ is tight, that is, given any $\epsilon>0$, there exists a compact set $K_j$ in $D[0,1]$, so that for all $N$ large enough:
\[
P\left(X_{j,N}\in K^c\right)<\epsilon
\]
where $K_j^c$ denotes the complement of $K_j$. If $K=K_1\times\ldots\times K_J$, then $K$ is compact in the product space $S^J$. We can associate $S^J$ with any compatible metric, e.g., for $\mathbf{X},\mathbf{Y}\in S^J$,
\[d_m(\mathbf{X},\mathbf{Y}):=\max_{1\le j\le J}(d(X_1,Y_1),\ldots, d(X_J,Y_J)).\]
The sequence $\mathbf{X}_N$ is tight on $D[0,1]^J$ since
\begin{align*}
&P\left(\mathbf{X}_{N}\in K^c\right)
=P(\cup_{j=1}^J \{X_{j,N}\in K_j^c\})\le \sum_{j=1}^J P(X_{j,N}\in K_j^c)< J\epsilon .
\end{align*}
\end{proof}
The univariate tightness is shown in \cite{taqqu1979convergence} for the LRD case. The tightness for the SRD case was considered in \cite{chambers1989central} p.~328 and holds under the following additional assumption, that $\{G(X_n)\}$ is SRD, with
\begin{equation}\label{e:tightSRD}
\sum_{k=1}^\infty 3^{k/2}(k!)^{1/2}|g_k|<\infty,
\end{equation}
where $g_k$ is the $k$-th coefficient of Hermite expansion (\ref{HermExpan}) of $G$. Observe that (\ref{e:tightSRD})  is a strengthening of the basic condition:
$\mathbb{E}[G(X_0)^2]=\sum_{k=1}k!g_k^2<\infty$. Hence we have:
\begin{Thm}\label{weakconv}
Suppose that condition (\ref{e:tightSRD}) holds for the short-range dependent components. Then the convergence in  \cref{Pure SRD}, \cref{Pure LRD} and \cref{SRD&LRD} holds as weak convergence in $D[0,1]^J$.
\end{Thm}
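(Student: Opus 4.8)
The plan is to combine the finite-dimensional convergence already established in \cref{Pure SRD}, \cref{Pure LRD} and \cref{SRD&LRD} with a tightness argument, using the standard fact that f.d.d.\ convergence together with tightness yields weak convergence in the relevant path space. First I would recall that, for a sequence of random elements in a product of Polish spaces, weak convergence is equivalent to f.d.d.\ convergence plus tightness of the sequence in the product space (see, e.g., Billingsley). Since the three cited theorems already give the f.d.d.\ limits, the only thing that remains is to establish tightness of $\mathbf{V}_N(\cdot)$ in $D[0,1]^J$ equipped with (the product of) the uniform metric.

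The second step is to reduce multivariate tightness to univariate tightness. This is exactly the content of the lemma proved just above \cref{weakconv}: if each coordinate process $V_{N,j}(\cdot)$ is tight in $D[0,1]$, then the vector $\mathbf{V}_N(\cdot)$ is tight in $D[0,1]^J$, because a finite product of compact sets is compact and the probability of the complement of such a product is bounded by the sum of the coordinate complement probabilities. So it suffices to check tightness coordinate by coordinate.

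The third step is to invoke the known univariate tightness results for each type of component. For a coordinate $j$ whose associated process $\{G_j(X_n)\}$ is LRD, i.e.\ whose normalized sum converges to a Hermite process $Z_d^{(k_j)}$, univariate tightness in $D[0,1]$ is established in \cite{taqqu1979convergence}. For a coordinate $j$ whose associated process is SRD, i.e.\ whose normalized sum converges to Brownian motion, univariate tightness in $D[0,1]$ is established in \cite{chambers1989central}, under the extra hypothesis \eqref{e:tightSRD} on the Hermite coefficients of $G_j$; this is precisely why \eqref{e:tightSRD} appears in the statement of \cref{weakconv}, imposed only on the SRD components. In the mixed case of \cref{SRD&LRD} one simply applies the LRD tightness result to the $\mathbf{L}_N$ coordinates and the SRD tightness result (using \eqref{e:tightSRD}) to the $\mathbf{S}_N$ coordinates. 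Combining: every coordinate is tight, hence by the lemma the vector is tight, hence f.d.d.\ convergence upgrades to weak convergence in $D[0,1]^J$.

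The only genuine subtlety — and the place where one must be slightly careful rather than the place where real work is needed — is making sure that the hypotheses of the cited univariate tightness theorems are met under exactly the standing assumptions of \cref{Pure SRD}, \cref{Pure LRD} and \cref{SRD&LRD}: in particular that \eqref{e:tightSRD} is both necessary and sufficient as an added condition for the SRD coordinates, and that no additional assumption beyond the LRD regime is needed for the Hermite-type coordinates. Since \cite{taqqu1979convergence} and \cite{chambers1989central} prove tightness under precisely those conditions, and the lemma above handles the passage to the product space, there is no further obstacle; the proof is essentially an assembly of already-available pieces. I would therefore write the proof as: combine the f.d.d.\ convergence from the three theorems with coordinatewise tightness from \cite{taqqu1979convergence} (LRD components) and \cite{chambers1989central} under \eqref{e:tightSRD} (SRD components), then lift to $D[0,1]^J$ via the lemma.
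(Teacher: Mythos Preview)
Your proposal is correct and follows essentially the same approach as the paper: the paper's argument is precisely to combine the already-established f.d.d.\ convergence with coordinatewise tightness---citing \cite{taqqu1979convergence} for the LRD components and \cite{chambers1989central} under \eqref{e:tightSRD} for the SRD components---and then invoke the lemma immediately preceding \cref{weakconv} to lift tightness to $D[0,1]^J$. There is nothing more to the paper's proof than this assembly of pieces, which you have identified accurately.
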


Condition (\ref{e:tightSRD}) is satisfied in the important special case where $G$ is a polynomial of finite order.

\section{Proofs of the multivariate convergence results}
\label{Sec:Proof}
\subsection{Proof of \cref{Pure SRD}}
\label{Sec:Pure SRD}
We start with a number of lemmas. The first yields the limit covariance structure in (\ref{CovStruct BM}).
\begin{Lem}\label{ComputeCov}
Assume that $\sum_{n}|\gamma(n)|^m<\infty$, then as $N\rightarrow \infty$:
\begin{equation} \label{e:gam1}
\frac{1}{N}\sum_{n_1=1}^{[Nt_1]}\sum_{n_2=1}^{[Nt_2]}\gamma(n_1-n_2)^m\rightarrow
(t_1\wedge t_2)\sum_{n=-\infty}^{\infty}\gamma(n)^m.
\end{equation}
\end{Lem}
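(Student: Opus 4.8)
The plan is to collapse the double sum into a single sum over the lag $k = n_1 - n_2$ and then pass to the limit by dominated convergence with respect to counting measure on $\mathbb{Z}$. Without loss of generality I would assume $t_1 \le t_2$, so that $t_1 \wedge t_2 = t_1$.

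First I would write
\[
\frac{1}{N}\sum_{n_1=1}^{[Nt_1]}\sum_{n_2=1}^{[Nt_2]}\gamma(n_1-n_2)^m = \sum_{k\in\mathbb{Z}}\frac{c_N(k)}{N}\,\gamma(k)^m,
\]
where $c_N(k) := \#\{(n_1,n_2):\ 1\le n_1\le [Nt_1],\ 1\le n_2\le[Nt_2],\ n_1-n_2=k\}$. An elementary count gives $c_N(k) = \big(\min([Nt_1]-k,[Nt_2]) - \max(1-k,1) + 1\big)_+$. Since $k$ is held fixed while $[Nt_1],[Nt_2]\to\infty$, one checks directly that $c_N(k)/N \to t_1 = t_1\wedge t_2$ for every $k\in\mathbb{Z}$: when $t_1<t_2$ the term $[Nt_1]-k$ is eventually the smaller one, while when $t_1=t_2$ one simply gets $c_N(k) = [Nt_1]-|k|$ for $N$ large; in both cases the leading behaviour is $Nt_1$.

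Next I would record the uniform bound $c_N(k) \le \min([Nt_1],[Nt_2]) \le N(t_1\wedge t_2)$, valid for all $k$ and $N$, since the number of lattice points in the rectangle having a prescribed coordinate difference cannot exceed the length of the shorter side. Consequently
\[
\left|\frac{c_N(k)}{N}\,\gamma(k)^m\right| \le (t_1\wedge t_2)\,|\gamma(k)|^m,
\]
and the right-hand side is summable over $k\in\mathbb{Z}$ by the hypothesis $\sum_n|\gamma(n)|^m<\infty$. The dominated convergence theorem for series then yields
\[
\sum_{k\in\mathbb{Z}}\frac{c_N(k)}{N}\,\gamma(k)^m \longrightarrow \sum_{k\in\mathbb{Z}}(t_1\wedge t_2)\,\gamma(k)^m = (t_1\wedge t_2)\sum_{n=-\infty}^{\infty}\gamma(n)^m,
\]
which is the assertion.

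I do not expect any genuine obstacle here: the only points needing care are writing down $c_N(k)$ correctly, checking $c_N(k)/N\to t_1\wedge t_2$ pointwise, and noting the domination $c_N(k)/N\le t_1\wedge t_2$; once these elementary facts are in hand the conclusion is immediate. An alternative route, splitting the rectangle into two triangular pieces and applying a Cesàro-type argument, also works but is messier than the lag-counting bookkeeping above.
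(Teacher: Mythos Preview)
Your proof is correct, and it is actually cleaner than the argument in the paper. The paper splits the rectangular sum as $S_N=S_{N,1}+S_{N,2}$, where $S_{N,1}$ runs over the square $\{1,\ldots,[Na]\}^2$ with $a=t_1\wedge t_2$ and $S_{N,2}$ over the remaining strip $\{1,\ldots,[Na]\}\times\{[Na]+1,\ldots,[Nb]\}$; it handles $S_{N,1}$ by the standard lag decomposition $c_N(k)=[Na]-|k|$, and then spends most of the work showing $S_{N,2}\to 0$ via a step-function representation and two nested applications of dominated convergence. Your approach bypasses the splitting entirely: you run the lag decomposition directly on the full rectangle, observe the uniform bound $c_N(k)\le \min([Nt_1],[Nt_2])\le N(t_1\wedge t_2)$, and apply dominated convergence once. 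What you gain is brevity and a single unified argument; what the paper's decomposition buys is perhaps a slightly more explicit view of why the ``excess'' strip contributes nothing, but at the cost of a longer and somewhat redundant proof. Amusingly, you even anticipate the paper's route in your closing remark about the ``alternative'' being messier.
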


\begin{proof}
Denote the left-hand side of (\ref{e:gam1}) by $S_N$. Let $a=t_1\wedge t_2$, and $b=t_1\vee t_2$, and
\[
S_{N,1}=\frac{1}{N}\sum_{n_1=1}^{[Na]}\sum_{n_2=1}^{[Na]}\gamma(n_1-n_2)^m,\qquad
S_{N,2}=\frac{1}{N}\sum_{n_1=1}^{[Na]}\sum_{n_2=[Na]+1}^{[Nb]}\gamma(n_1-n_2)^m,
\]
so $S_N=S_{N,1}+S_{N,2}$. We have as $N\rightarrow\infty$,
\[
S_{N,1}=a\sum_{n_1=-[Na]+1}^{[Na]-1}\frac{[Na]-|n|}{Na}\gamma(n)^m\rightarrow a \sum_{n=-\infty}^{\infty}\gamma(n)^m.
\]
We hence need to show that $S_{N,2}\rightarrow 0$. Let $c(n)=\gamma(n)^m$, then

\[
S_{N,2}\le\frac{1}{N}\sum_{n_1=1}^{[Na]}\sum_{n_2=[Na]+1}^{[Nb]}|c(n_2-n_1)|=\frac{1}{N}\sum_{n_1=1}^{[Na]}c_{N,n_1}=\int_0^a f_N(u)du,
\]
where
\[
c_{N,n_1}:=
\sum_{n_2=[Na]+1}^{[Nb]}|c(n_2-n_1)|
=\sum_{n_2=1}^{[Nb]-[Na]}|c([Na]+n_2-n_1)|,
\]
and for $u\in(0,a)$,
\begin{eqnarray*}
f_N(u):&=&
\sum_{n_1=1}^{[Na]}c_{N,n_1}\mathbf{1}_{[\frac{n_1-1}{N},\frac{n_1}{N})}(u) \\
  &=&\sum_{n_2=1}^{[Nb]-[Na]}\sum_{n_1=1}^{[Na]}
    |c([Na]+n_2-n_1)|\mathbf{1}_{[\frac{n_1-1}{N},\frac{n_1}{N})}(u) \\
&=&\sum_{n_2=1}^{[Nb-Na]}|c([Na]-[Nu]-1+n_2)|.
\end{eqnarray*}
Now observe that $f_N(u)\le \sum_{n=-\infty}^\infty |c(n)|
= \sum_{n=-\infty}^\infty |\gamma(n)|^m
<\infty$
and that
 $[Na]-[Nu]\rightarrow \infty$ as $N\rightarrow \infty$ . Applying the Dominated Convergence Theorem, we deduce $f_N(u)\rightarrow 0$ on $(0,a)$. Applying the Dominated Convergence Theorem again, we
 conclude that
$S_{N,2}\rightarrow 0$.
\end{proof}

Now we introduce some notations, setting for $G\in L^2(\phi)$,
\begin{equation}\label{e:sumG}
S_{N,t}(G):=\frac{1}{\sqrt{N}}\sum_{n=1}^{[Nt]}G(X_n).
\end{equation}
The Hermite expansion of each $G_j$ is
\begin{equation}\label{e:Hexp}
G_j=\sum_{m=k_j}^\infty g_{m,j}H_m
\end{equation}
if $G_j$ has Hermite rank $k_j$.
Since we are in the pure SRD case, we have as in \cref{summability of autocov}, that the auto-covariance function $\gamma(n)$ of $\{X_n\}$
\[
\sum_{n=-\infty}^\infty |\gamma(n)|^{k_j}<\infty,\quad \text{for } j=1,\ldots,J.
\]

The following lemma states that it suffices to replace a general $G_j$ with a finite linear combination of Hermite polynomials:
\begin{Lem}\label{ReductCLT}
If \cref{Pure SRD} holds with a finite linear combination of Hermite polynomials $G_j=\sum_{m=k_j}^{M} a_{m,j}H_m$ for any $M\ge \max_j(k_j)$ and any $a_{m,j}$, then it also holds for any $G_j\in L^2(\phi)$.
\end{Lem}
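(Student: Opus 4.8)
The plan is the standard Hermite-rank truncation argument used to pass from polynomials to general $L^2(\phi)$ functions in Breuer--Major type results. For $M\ge \max_j k_j$ set $G_j^M:=\sum_{m=k_j}^M g_{m,j}H_m$; this still has Hermite rank $k_j$, and $G_j-G_j^M=\sum_{m>M}g_{m,j}H_m$. By hypothesis, \cref{Pure SRD} applies to the vector $(G_1^M,\dots,G_J^M)$: its $N^{1/2}$-normalized partial-sum vector converges in f.d.d.\ to a multivariate Brownian motion $\mathbf B^M$ whose cross-covariances are given by (\ref{CovStruct BM}) with the inner sum over $m$ truncated at $M$, the intrinsic normalization being $A_j^M(N)\sim \sigma_j^M N^{1/2}$ with $(\sigma_j^M)^2=\sum_{m=k_j}^M g_{m,j}^2\,m!\sum_n\gamma(n)^m$. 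Since $(\sigma_j^M)^2\to \sigma_j^2$ and $\sigma_j^2>0$ (the latter is implicit in the statement of \cref{Pure SRD}, being exactly what makes the $N^{1/2}$-normalization correct), we get $A_j^M(N)/A_j(N)\to \sigma_j^M/\sigma_j$ for all $M$ large; hence the \emph{same} partial sums divided by the prescribed $A_j(N)$ converge in f.d.d.\ to $(\sigma_j^M/\sigma_j)\,B_j^M$. Letting $M\to\infty$, this limit converges (in f.d.d., indeed in $L^2$ at each fixed time) to $\mathbf B$, since $\sigma_j^M\to\sigma_j$ and the truncated covariances converge to (\ref{CovStruct BM}) by dominated convergence in $m$, a summable dominating sequence being $|g_{m,j_1}g_{m,j_2}|\,m!\sum_n|\gamma(n)|^{k_{j_1}\vee k_{j_2}}$.

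The one remaining ingredient is a bound on the discarded tail that is uniform in $N$. Using $\mathrm{Cov}(H_m(X_{n_1}),H_{m'}(X_{n_2}))=\delta_{mm'}\,m!\,\gamma(n_1-n_2)^m$ together with the elementary inequality $|\gamma(n)|\le\gamma(0)=1$, which gives $\sum_n|\gamma(n)|^m\le\sum_n|\gamma(n)|^{k_j}=:C_j<\infty$ for every $m\ge k_j$, one obtains
\[
\mathrm{Var}\!\left(\sum_{n=1}^{[Nt]}\bigl(G_j(X_n)-G_j^M(X_n)\bigr)\right)=\sum_{m>M}g_{m,j}^2\,m!\sum_{n_1,n_2=1}^{[Nt]}\gamma(n_1-n_2)^m\ \le\ [Nt]\,C_j\sum_{m>M}g_{m,j}^2\,m!.
\]
Dividing by $A_j(N)^2\sim\sigma_j^2N$ and using $\sum_{m\ge k_j}g_{m,j}^2\,m!=\mathbb E[G_j(X_0)^2]<\infty$, this is at most a fixed constant times $\sum_{m>M}g_{m,j}^2\,m!$ for all large $N$ and all $t\le T$, and the right-hand side tends to $0$ as $M\to\infty$, uniformly in $N$ and $t$.

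With these three facts in hand --- (i) f.d.d.\ convergence of the suitably renormalized $M$-truncated vector to $(\sigma_j^M/\sigma_j)\mathbf B^M$ for each fixed $M$, (ii) $(\sigma_j^M/\sigma_j)\mathbf B^M\ConvD\mathbf B$ as $M\to\infty$, and (iii) the uniform tail bound combined with Chebyshev's inequality --- the conclusion follows from the classical approximation lemma: if $\xi_N^M\ConvD\xi^M$ as $N\to\infty$ for each $M$, $\xi^M\ConvD\xi$ as $M\to\infty$, and $\lim_M\limsup_N\mathbb P(|\xi_N^M-\xi_N|>\varepsilon)=0$ for each $\varepsilon>0$, then $\xi_N\ConvD\xi$. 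One applies this with $\xi_N$ the $\mathbb R^{Jp}$-valued vector $\bigl(A_j(N)^{-1}\sum_{n=1}^{[Nt_i]}G_j(X_n)\bigr)_{1\le i\le p,\,1\le j\le J}$ for an arbitrary finite collection of times $t_1,\dots,t_p$, which is precisely what f.d.d.\ convergence of $\mathbf V_N$ requires.

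I do not expect a genuine obstacle here: the only slightly delicate point is the normalization bookkeeping --- reconciling the intrinsic $N^{1/2}$-normalization of the truncated theorem with the prescribed $A_j(N)$ --- which hinges on the (implicit) non-degeneracy $\sigma_j^2>0$; everything else is routine second-moment estimation, the key being the crude bound $\sum_n|\gamma(n)|^m\le\sum_n|\gamma(n)|^{k_j}$ valid uniformly for $m\ge k_j$.
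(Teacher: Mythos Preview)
Your argument is correct and follows essentially the same route as the paper: truncate each $G_j$ at level $M$, use the hypothesis on the truncated polynomials, control the discarded tail in $L^2$ via the uniform bound $\sum_n|\gamma(n)|^m\le\sum_n|\gamma(n)|^{k_j}$ (valid since $|\gamma(n)|\le 1$), and then invoke Billingsley's triangular approximation (Theorem 3.2). The only cosmetic difference is the normalization bookkeeping: the paper sidesteps your $\sigma_j^M/\sigma_j$ rescaling by working with the raw $N^{1/2}$ normalization throughout (so the intermediate limits $\mathbf B_M$ and $\mathbf B$ are \emph{scaled} Brownian motions with marginal variances $(\sigma_j^M)^2$ and $\sigma_j^2$), and only divides by $\sigma_j$ at the very end; this avoids having to worry about whether $\sigma_j^M>0$ for a given $M$.
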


\begin{proof}
First we obtain an $L^2$ bound for $S_{N,t}(H_m)$.
By $\mathbb{E}H_m(X)H_m(Y)=m!\mathbb{E}(XY)^m$ (Proposition 2.2.1 in \cite{nourdin2012normal}), for $m\ge 1$,

\begin{align}
\mathbb{E}(S_{N,t}(H_m))^2&=\frac{1}{N}\sum_{n_1,n_2=1}^{[Nt]}\mathbb{E}H_m(X_{n_1})H_m(X_{n_2})
=\frac{m!}{N}\sum_{n_1,n_2=1}^{[Nt]}\gamma(n_1-n_2)^m\notag\\
&=tm!\sum_{n=1-[Nt]}^{[Nt]-1}\frac{[Nt]-|n|}{Nt}\gamma(n)^m
\le tm!\sum_{n=-\infty}^\infty|\gamma(n)|^m\label{BoundSingleHerm}.
\end{align}

Next, fix any $\epsilon>0$. By (\ref{BoundSingleHerm}) and
$\|G\|^2_{L^2(\phi)}=\sum_{m=0}^\infty g_m^2m!$, for $M$ large enough, one has
\begin{align*}
&\mathbb{E}|S_{N,t}(G_j)-S_{N,t}(\sum_{m={k_j}}^{M}g_{m,j}H_m)|^2
=\mathbb{E}| S_{N,t}(\sum_{m=M+1}^\infty g_{m,j}H_m)|^2\\
=&\sum_{m=M+1}^\infty g_{m,j}^2 \mathbb{E}(S_{N,t}(H_m))^2
\le t\sum_{n=-\infty}^\infty |\gamma(n)|^{k_j} \sum_{m=M+1}^\infty g_{m,j}^2m!
\le \epsilon t.
\end{align*}
Therefore, the $J$-vector
\[
\mathbf{V}_{N,M}(t)=\left(S_{N,t}(\sum_{m=k_1}^{M}g_{m,1}H_m),\ldots,S_{N,t}(\sum_{m=k_J}^{M}g_{m,j}H_m)\right)
\]
satisfies $\limsup_N\mathbb{E}||\mathbf{V}_{N,M}(t)-\mathbf{V}_N(t)|^2\le J\epsilon t$, and thus
\[
\lim_M \limsup_N \mathbb{E}|\mathbf{V}_{N,M}(t)-\mathbf{V}_N(t)|^2=0.
\]
By assumption, we have as $N\rightarrow \infty$ $\mathbf{V}_{N,M}(t)\ConvFDD \mathbf{B}_M(t)=\left(B_{M,1},\ldots,B_{M,J}\right)$, where the multivariate Gaussian $\mathbf{B}_M(t)$ has (scaled) Brownian motions as marginals with a covariance structure computed  using \cref{ComputeCov} as follows:
\begin{align*}
\mathbb{E}(B_{M,j_1}(t_1)B_{M,j_2}(t_2))
&=\lim_{N\rightarrow\infty}\mathbb{E}\left( S_{N,t_1}(\sum_{m=k_{j_1}}^{M}g_{m,j_1}H_m) S_{N,t_2}(\sum_{m=k_{j_2}}^{M}g_{m,j_2}H_m) \right)\\
&=\lim_{N\rightarrow\infty}\sum_{m=k_{j_1}\vee k_{j_2}}^Mg_{m,j_1}g_{m,j_2}m!\sum_{n_1=1}^{[Nt_1]}\sum_{n_2=1}^{[Nt_2]}\gamma(n_1-n_2)^m\\
&=(t_1\wedge t_2)\sum_{m=k_{j_1}\vee k_{j_2}}^Mg_{m,j_1}g_{m,j_2}m!\sum_{n=-\infty}^{\infty}\gamma(n)^m.
\end{align*}

Furthermore,  as $M\rightarrow\infty$, $\mathbf{B}_M(t)$ tends in f.d.d.\ to $\mathbf{B}(t)$, which is a multivariate  Gaussian process with the following covariance structure:
\[
\mathbb{E}(B_{j_1}(t_1)B_{j_2}(t_2))=(t_1\wedge t_2)\sum_{m=k_{j_1}\vee k_{j_2}}^\infty g_{m,j_1}g_{m,j_2}m!\sum_{n=-\infty}^{\infty}\gamma(n)^m.
\]

Therefore, applying the triangular argument in \cite{patrick1999convergence} Theorem 3.2, we have
\[
\mathbf{V}_N(t)\ConvFDD \mathbf{B}(t).
\]
\end{proof}

The proof of \cref{Pure SRD} about the pure SRD case relies on \cite{nourdin2012normal} Theorem 6.2.3, which says that for multiple Wiener-It\^{o} integrals, univariate convergence to normal random variables implies joint convergence to a multivariate normal. We state it as follows:

\begin{Lem}\label{MultiCLT}
Let $J\ge 2$ and $k_1,\ldots,k_j$ be some fixed positive integers. Consider vectors
\[
\mathbf{V}_N=(V_{N,1},\ldots,V_{N,J}):=(I_{k_1}(f_{N,1}),\ldots,I_{k_J}(f_{N,J}))
\]
with $f_{N,j}$ in $L^2(\mathbb{R}^{k_j})$. Let $C$ be a symmetric non-negative definite matrix such that
\[
\mathbb{E}(V_{N,i}V_{N,j})\rightarrow C(i,j).
\]
Then the univariate convergence as $N\rightarrow\infty$
\[
V_{N,j}\ConvD N(0,C(j,j)) \quad j=1,\ldots,J
\]
implies the joint convergence
\[
\mathbf{V}_N\ConvD N(\mathbf{0},C).
\]
\end{Lem}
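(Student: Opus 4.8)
The plan is to invoke the fourth-moment/Malliavin--Stein machinery, specifically the Peccati--Tudor theorem, which is exactly \cite{nourdin2012normal} Theorem 6.2.3. First I would recall the key fact (the Nualart--Peccati criterion, \cite{nourdin2012normal} Theorem 5.2.7): for a sequence of multiple Wiener--Itô integrals $I_{k}(f_{N})$ of fixed order $k$ with $\mathbb{E}[I_{k}(f_{N})^{2}]\to \sigma^{2}$, convergence in distribution to $N(0,\sigma^{2})$ is equivalent to $\mathbb{E}[I_{k}(f_{N})^{4}]\to 3\sigma^{4}$, and also equivalent to $\|f_{N}\otimes_{r}f_{N}\|\to 0$ for every $r=1,\ldots,k-1$, where $\otimes_{r}$ denotes the $r$-th contraction. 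So the hypothesis $V_{N,j}\ConvD N(0,C(j,j))$ gives us, for each fixed $j$, that all the nontrivial contractions $f_{N,j}\otimes_{r}f_{N,j}$ tend to $0$ in $L^{2}$.

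Next I would state the Peccati--Tudor theorem in the form needed: given a vector $\mathbf{V}_{N}=(I_{k_{1}}(f_{N,1}),\ldots,I_{k_{J}}(f_{N,J}))$ with covariances converging to a symmetric nonnegative-definite matrix $C$, the joint convergence $\mathbf{V}_{N}\ConvD N(\mathbf{0},C)$ holds if and only if each component converges marginally to its Gaussian limit. The nontrivial content of that theorem is that componentwise fourth-moment convergence (equivalently, vanishing of the within-component contractions $f_{N,j}\otimes_{r}f_{N,j}$) automatically forces the cross-contractions $f_{N,i}\otimes_{r}f_{N,j}$ ($i\neq j$) to vanish as well, and then a Malliavin--Stein bound on the distance to a Gaussian vector — phrased in terms of all these contractions — drives the joint CLT. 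Since this is precisely \cite{nourdin2012normal} Theorem 6.2.3, the proof of \cref{MultiCLT} reduces to checking that the hypotheses match: the integrands $f_{N,j}$ lie in the appropriate $L^{2}$ spaces, the orders $k_{j}$ are fixed, the covariance matrix converges to the symmetric nonnegative-definite $C$, and each marginal converges. I would therefore simply cite the theorem and observe that our statement is a verbatim specialization.

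One technical point worth flagging, since it is the only place something could go wrong, is the passage between the spectral-domain integrals $I_{m}(\cdot)$ used throughout the paper (integrals against the complex Hermitian Gaussian measure $W$ with control measure $F$) and the real/time-domain multiple integrals in which \cite{nourdin2012normal} Theorem 6.2.3 is stated. These are isometric images of one another under a standard unitary isomorphism of the underlying Gaussian Hilbert spaces, so contraction norms, variances, and the fourth-moment condition are all preserved; hence the theorem transfers without change. I do not expect any genuine obstacle here — the substance of the result is imported wholesale from \cite{nourdin2012normal}, and the ``main step'' is merely this bookkeeping identification of frameworks. Accordingly the proof is essentially a one-line citation, which is how I would present it.
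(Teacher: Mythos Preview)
Your proposal is correct and matches the paper exactly: the paper does not prove \cref{MultiCLT} at all but simply states it as a restatement of \cite{nourdin2012normal} Theorem~6.2.3, which is precisely what you do (with some additional explanatory commentary on the Nualart--Peccati and Peccati--Tudor ingredients). Your remark about transferring between the spectral and real-variable Wiener--It\^o frameworks is a reasonable clarification, but the paper itself omits even that and treats the lemma as a direct citation.
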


We now prove \cref{Pure SRD}.

\begin{proof}
Take time points $t_1,\ldots,t_I$, let $\mathbf{V}_N(t)$ be the vector in (\ref{VectorQuest}) in the context of \cref{Pure SRD}, with $G_j$ replaced by a finite linear combination of Hermite polynomials (\cref{ReductCLT}).
Thus
\begin{equation}\label{e:VVect}
\mathbf{V}_N(t_i)=\left(\sum_{m=k_1}^M \frac{g_{m,1}}{A_1(N)}S_{N,t_i}(H_m),\ldots, \sum_{m=k_J}^M \frac{g_{m,J}}{A_J(N)}S_{N,t_i}(H_m)\right).
\end{equation}

We want to show  the joint convergence
\begin{equation}\label{TargetCLT}
\Big(\mathbf{V}_N(t_1),\ldots,\mathbf{V}_N(t_I)\Big)\ConvD\Big(\mathbf{B}(t_1),\ldots,\mathbf{B}(t_I)\Big)
\end{equation}
with $\mathbf{B}(t)$ being the J-dimensional Gaussian process with covariance structure given by (\ref{CovStruct BM}).

By (\ref{Herm<->Int}), and because the term $\frac{g_{m,j}}{A_j(N)}S_{N,t_i}(H_m)$ involves the m-th order Hermite polynomial only, we can represent it as an m-tuple Wiener-It\^{o} integral:
\[
\frac{g_{m,j}}{A_j(N)}S_{N,t_i}(H_m)=:I_m(f_{N,m,i,j})
\]
for some  square-integrable function $f_{N,m,i,j}$. Now
\begin{equation}\label{Write as Integral}
\mathbf{V}_N(t_i)=\left(\sum_{m=k_1}^MI_m(f_{N,m,i,1}),\ldots,\sum_{m=k_J}^MI_m(f_{N,m,i,J})\right)
\end{equation}

To show (\ref{TargetCLT}), one only needs to show that as $N\rightarrow \infty$, $\big(I_m(f_{N,m,i,j})\big)_{m,i,j}$ converges jointly to a multivariate normal  with the correct covariance structure.

Note by the univariate SRD result, namely, \cref{CLT}, each $I_m(f_{N,m,i,j})=\frac{g_{m,j}}{A_j(N)}S_{N,t_i}(H_m)$ converges to a univariate normal. Therefore, by \cref{MultiCLT}, it's sufficient to show the covariance structure of $\big(I_m(f_{N,m,i,j})\big)_{m,i,j}$ is consistent with the covariance structure of $(B_j(t_i))_{i,j}$ as $N\rightarrow\infty$.

Note that $A_j(N)=\sigma_jN^{1/2}$ where $\sigma_j$ is found in (\ref{sigma_j}). If $m_1\neq m_2$,
$$
EI_{m_1}(f_{N,m,i_1,j_1})I_{m_2}(f_{N,m,i_2,j_2})
=
\frac{g_{m_1,j_1},g_{{m_2},j_2}}{\sigma_{j_1}\sigma_{j_2}N}
\mathbb{E}\left(S_{N,t_{i_1}}(H_{m_1})S_{N,t_{i_2}}(H_{m_2})\right)
=0.
$$
If $m_1=m_2=m$,
\begin{align*}
&\mathbb{E}I_{m}(f_{N,m,i_1,j_1})I_{m}(f_{N,m,i_2,j_2})
\\&=
\frac{g_{m,j_1},g_{{m},j_2}}{\sigma_{j_1}\sigma_{j_2}}
\frac{1}{N}\sum_{n_1=1}^{[Nt_{i_1}]}\sum_{n_2=1}^{[Nt_{i_2}]}\mathbb{E}\big(H_m(X_{n_1})H_m(X_{n_2})\big)
\\&=\frac{m!g_{m,j_1},g_{{m},j_2}}{\sigma_{j_1}\sigma_{j_2}}
\frac{1}{N}\sum_{n_1=1}^{[Nt_{i_1}]}\sum_{n_2=1}^{[Nt_{i_2}]}\gamma(n_1-n_2)
\\&\rightarrow \frac{t_{i_1}\wedge t_{i_2}}{\sigma_{j_1}\sigma_{j_2}}g_{m,j_1},g_{{m},j_2}m!\sum_{n=-\infty}^{\infty}\gamma(n)^m~\text{   as } N\rightarrow\infty
\end{align*}
by \cref{ComputeCov}.

Since every component of $\mathbf{V}_N$ in (\ref{e:VVect}) is the sum  of multiple Wiener-It\^{o} integrals, it follows that
\[
\mathbb{E}V_{N,{j_1}}(t_{i_1})V_{N,{j_2}}(t_{i_2})\rightarrow
\frac{t_{i_1}\wedge t_{i_2}}{\sigma_{j_1}\sigma_{j_2}}\sum_{m=k_{j_1}\vee k_{j_2}}^M g_{m,j_1}g_{m,j_2}m!\sum_{n=-\infty}^{\infty}\gamma(n)^m,
\]
which is the covariance in (\ref{CovStruct BM}), where here $M$ is finite due to  \cref{ReductCLT}.
\end{proof}

\subsection{Proof of \cref{Pure LRD}}
\label{Sec:Pure LRD}
The pure LRD case is proved by extending the proof in \cite{dobrushin1979non} to the multivariate case.
Set
\[
S_{N,t}(G)=\sum_{n=1}^{[Nt]}G(X_n).
\]
The normalization factor which makes the variance at $t=1$ tend to 1 is
\begin{equation}\label{e:AjN}
A_j(N)=a_j L(N)^{k_j/2}N^{1+k_j(d-1/2)},
\end{equation}
where the slowly varying function $L(N)$ stems from the auto-covariance function: $\gamma(n)=L(n)n^{2d-1}$ and where $a_j$ is a normalization constant.

The Hermite expansion of each $G_j$ is given in \ref{e:Hexp}
The following reduction lemma shows that it suffices to replace $G_j$'s with corresponding Hermite polynomials.
\begin{Lem}\label{ReductNCLT}
If the convergence in (\ref{Pure LRD Limit}) holds with $g_{k_j,j}H_{k_j}$ replacing $G_j$, then it also holds for $G_j$, $j=1,\ldots,J$.
\end{Lem}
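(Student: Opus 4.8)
The plan is to follow the same truncation-and-triangular-array scheme as in the proof of \cref{ReductCLT}, but with the LRD normalization and with a crucial new ingredient: unlike the SRD case, in the LRD regime only the \emph{lowest} Hermite component $g_{k_j,j}H_{k_j}$ survives in the limit, and all higher-order terms vanish after normalization. So the first step is to establish, for a single Hermite polynomial $H_m$ with $m > k_j$, the variance bound
\[
\mathbb{E}\bigl(S_{N,t}(H_m)\bigr)^2 = m!\sum_{n_1,n_2=1}^{[Nt]}\gamma(n_1-n_2)^m = o\bigl(A_j(N)^2\bigr)
\]
as $N\to\infty$. This is the key estimate. Concretely, if $d > \tfrac12(1-\tfrac1{k_j})$ then $k_j(d-\tfrac12) > -\tfrac12$; for $m > k_j$ one checks that either $\sum_n |\gamma(n)|^m < \infty$ (when $m > 1/(1-2d)$, giving a bound of order $N$) or the double sum grows like $L(N)^m N^{2+m(2d-1)}$ (when $m(1-2d) < 1$). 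In either case one compares the exponent against $2 + 2k_j(2d-1) $ coming from $A_j(N)^2 \propto L(N)^{k_j}N^{2+k_j(2d-1)}$, and since $2d-1 < 0$ the exponent strictly decreases in $m$; hence the ratio tends to $0$. (One must treat the borderline case $m = 1/(1-2d)$, where an extra slowly varying factor appears, but the strict inequality in the exponents still forces the ratio to $0$.) The same computation shows $A_j(N)^{-2}\mathbb{E}\bigl(S_{N,t}(g_{k_j,j}H_{k_j})\bigr)^2 \to t^{2d_{G_j}}$ with the right constant, by the classical Dobrushin--Major asymptotics.

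With this in hand, the second step is the $L^2$-truncation argument. Write $G_j = g_{k_j,j}H_{k_j} + R_j$ where $R_j = \sum_{m > k_j} g_{m,j}H_m \in L^2(\phi)$. By orthogonality of distinct Hermite polynomials and the bound above,
\[
\frac{1}{A_j(N)^2}\mathbb{E}\bigl(S_{N,t}(R_j)\bigr)^2 = \sum_{m>k_j} g_{m,j}^2 \,\frac{\mathbb{E}(S_{N,t}(H_m))^2}{A_j(N)^2}.
\]
Each summand tends to $0$; to exchange the limit and the sum one needs a dominating bound. Here I would split $R_j$ further as $R_j = R_j^{(M)} + \tilde R_j^{(M)}$ with $R_j^{(M)} = \sum_{k_j < m \le M} g_{m,j}H_m$ a finite sum (so its normalized $L^2$-norm $\to 0$ for each fixed $M$), and for the tail $\tilde R_j^{(M)} = \sum_{m>M} g_{m,j}H_m$ use the uniform-in-$N$ bound $A_j(N)^{-2}\mathbb{E}(S_{N,t}(H_m))^2 \le C_t$ for all $m > k_j$ (which also follows from the first step, taking the worst exponent), giving $A_j(N)^{-2}\mathbb{E}(S_{N,t}(\tilde R_j^{(M)}))^2 \le C_t\sum_{m>M} g_{m,j}^2 m! \to 0$ as $M\to\infty$, uniformly in $N$. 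Then $\lim_M \limsup_N A_j(N)^{-2}\mathbb{E}(S_{N,t}(R_j))^2 = 0$, hence $A_j(N)^{-1}S_{N,t}(R_j) \to 0$ in $L^2$, hence in probability.

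The third step assembles the conclusion. By hypothesis the vector $\bigl(A_j(N)^{-1}S_{N,t}(g_{k_j,j}H_{k_j})\bigr)_{j=1,\ldots,J}$, over finitely many time points, converges in f.d.d.\ to $\mathbf{Z}_d^{\mathbf{k}}(t)$ (after absorbing the constants $g_{k_j,j}$, which is consistent with the normalization choice \eqref{e:AjN}). Since each coordinate of $\mathbf{V}_N(t) - \bigl(A_j(N)^{-1}S_{N,t}(g_{k_j,j}H_{k_j})\bigr)_j$ converges to $0$ in probability by Step~2, Slutsky's theorem (equivalently, convergence in probability of the difference implies the two sequences have the same f.d.d.\ limit) gives $\mathbf{V}_N(t) \ConvFDD \mathbf{Z}_d^{\mathbf{k}}(t)$. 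I expect the main obstacle to be Step~1: carefully tracking the regularly varying asymptotics of $\sum_{n_1,n_2=1}^{[Nt]}\gamma(n_1-n_2)^m$ across the three regimes ($m(1-2d)<1$, $=1$, $>1$) and verifying in each that the exponent is strictly below that of $A_j(N)^2$ — this is where the condition $d > \tfrac12(1-\tfrac1{k_j})$ enters decisively, since it is exactly what guarantees the surviving term $m=k_j$ is not itself in the summable regime while every $m>k_j$ decays faster relative to the normalization.
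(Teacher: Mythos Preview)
Your approach is correct and follows the same strategy as the paper: show that the remainder $R_j = G_j - g_{k_j,j}H_{k_j}$ satisfies $A_j(N)^{-1}S_{N,t}(R_j)\to 0$ in $L^2$, then conclude via Cram\'er--Wold/Slutsky. The paper, however, bypasses your term-by-term analysis and $M$-truncation entirely: since $G_j^* := R_j$ has Hermite rank at least $k_j+1$, its variance growth is already controlled by the univariate bounds (\ref{VarGrowthSRD}) and (\ref{VarGrowthLRD}) as $\mathrm{Var}\big(S_{N,t}(G_j^*)\big) \le L_j^*([Nt])\,[Nt]^{(k_j+1)(2d-1)+2}$, which is $o\big(A_j(N)^2\big)$ because $2d-1<0$ --- so your three-regime case split and tail bound are unnecessary once you invoke those remarks directly.
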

\begin{proof}
By the Cram\'er-Wold device, we want to show for every $(w_1,\ldots,w_J)\in \mathbb{R}^J$, the following convergence:
\[
\sum_{j=1}^J w_j\frac{S_{N,t}(G_j)}{A_j(N)}\ConvFDD\sum_{j=1}^J w_jZ^{(k_j)}_d(t).
\]
Let $G^*_j=g_{k_j+1,j}H_{k_j+1}+g_{k_j+2,j}H_{k_j+2}+\ldots$, then
\[
\sum_{j=1}^J w_j\frac{S_{N,t}(G_j)}{A_j(N)}=\sum_{j=1}^J w_j\frac{S_{N,t}(g_{k_j,j}H_{k_j})}{A_j(N)}+\sum_{j=1}^J w_j\frac{S_{N,t}(G^*_j)}{A_j(N)}.
\]
By the assumption of this lemma and by the Cram\'er-Wold device,
\[
\sum_{j=1}^J w_j\frac{S_{N,t}(g_{k_j,j}H_{k_j})}{A_j(N)}\ConvFDD\sum_{j=1}^J w_jZ^{(k_j)}_d(t).
\]
Hence it suffices to show that for any $t>0$,
\[
\mathbb{E}\left(\sum_{j=1}^J w_j\frac{S_{N,t}(G^*_j)}{A_j(N)}\right)^2\rightarrow 0.
\]
By the elementary inequality: $(\sum_{j=1}^Jx_j)^2\le J\sum_{j=1}^J x_j^2$, it suffices to show that for each $j$,
\[
\mathbb{E}\left(\frac{S_{N,t}(G^*_j)}{A_j(N)}\right)^2\rightarrow 0.
\]
This is because the variance growth of $G^*_j$ (see (\ref{VarGrowthSRD}) and (\ref{VarGrowthLRD})) is at most $L^*_j([Nt])[Nt]^{(k_j+1)(2d-1)+2}$
for some slowly varying function $L_j^*$ , while the normalization
$A_j(N)^2= a_j^2L_j(N)^{k_j}N^{k_j(2d-1)+2}$ tends more rapidly to infinity.
\end{proof}

The following lemma extends Lemma 3 of \cite{dobrushin1979non} to the multivariate case. It states that if Lemma 3 of \cite{dobrushin1979non} holds in the univariate case in each component, then it holds in the multivariate joint case.

\begin{Lem}\label{ConvLemma}
Let  $F_0$ and $F_N$ be symmetric locally finite Borel measures without atoms on $\mathbb{R}$ so that $F_N\rightarrow F$ weakly. Let $W_{F_N}$ and $W_{F_0}$ be complex Hermitian Gaussian measures with control measures $F_N$ and $F_0$ respectively.

Let $K_{N,j}$ be a series of Hermitian($K(-\mathbf{x})=\overline{K(\mathbf{x})}$) measurable functions of $k_j$ variables tending to a continuous  function $K_{0,j}$ uniformly in any compact set in $\mathbb{R}^{k_j}$ as $N\rightarrow\infty$.

Moreover, suppose the following uniform integrability type condition holds for every $j=1,\ldots,J$:
\begin{equation}\label{Integrability}
\lim_{A\rightarrow \infty} \sup_N \int_{\mathbb{R}^{k_j}\setminus [-A,A]^{k_j}} |K_{N,j}(\mathbf{x})|^2 F_N(dx_1),\ldots,F_N(dx_{k_j})=0.
\end{equation}

Then we have the joint convergence:
\begin{equation}\label{ConvLemma Result}
\left(I_{k_1}^{(N)}(K_{N,1}),\ldots,I_{k_J}^{(N)}(K_{N,J})\right)\overset{d}{\rightarrow}
\left(I_{k_1}^{(0)}(K_{0,1}),\ldots,I_{k_J}^{(0)}(K_{0,J})\right).
\end{equation}
where $I_{k}^{(N)}(.)$ denotes a k-tuple Wiener-It\^{o} integral  with respect to complex Gaussian random measure $W_{F_N}$, $N=0,1, 2,\ldots$
\end{Lem}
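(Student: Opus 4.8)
The plan is to reduce the joint convergence in \eqref{ConvLemma Result} to a statement about a single Wiener-It\^o integral by means of the Cram\'er--Wold device, and then invoke the univariate result of Dobrushin and Major (Lemma 3 of \cite{dobrushin1979non}) which is exactly the one-component case of this lemma. First I would fix an arbitrary vector $(w_1,\ldots,w_J)\in\mathbb{R}^J$ and consider the linear combination $\sum_{j=1}^J w_j I_{k_j}^{(N)}(K_{N,j})$. The obstacle is that these are multiple integrals of \emph{different orders} $k_j$, so a linear combination is not itself a single multiple integral of fixed order; one cannot directly apply a scalar version of Lemma 3. The standard device to get around this is to stack everything into one multiple integral of a \emph{common} high order by padding. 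Concretely, pick $k=\max_j k_j$, and for each $j$ form a kernel on $\mathbb{R}^{k}$ by tensoring $K_{N,j}$ with a fixed ``bump'' $h^{\otimes(k-k_j)}$ where $h$ is a fixed nice Hermitian function with $\int |h|^2\,dF_N\to\int|h|^2\,dF_0$ (e.g.\ take $h$ so that this holds, using $F_N\to F_0$ weakly, $h$ continuous with compact support); by the product formula $I_{k_j}^{(N)}(K_{N,j})$ and $I_k^{(N)}(K_{N,j}\otimes h^{\otimes(k-k_j)})$ differ by an explicit constant factor (a power of $\int|h|^2 dF_N$) plus lower-order integrals, or — cleaner — one replaces the whole discussion by noting that it suffices to check convergence of the joint law, and the joint law is determined by the collection of mixed moments.

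Given the structure of \cite{dobrushin1979non}, the cleanest route is in fact the \textbf{method of moments combined with a diagram/Wick-calculus argument}: since each marginal $I_{k_j}^{(N)}(K_{N,j})$ converges in distribution (by Lemma 3 of \cite{dobrushin1979non}, whose hypotheses — weak convergence $F_N\to F_0$, uniform convergence of $K_{N,j}$ on compacts, and the uniform integrability \eqref{Integrability} — are precisely those assumed here), each marginal sequence has all moments bounded and converging. Then I would verify that all \emph{joint} moments $\mathbb{E}\big[\prod_{\ell} I_{k_{j_\ell}}^{(N)}(K_{N,j_\ell})\big]$ converge to the corresponding joint moments of the limit vector $\big(I_{k_1}^{(0)}(K_{0,1}),\ldots,I_{k_J}^{(0)}(K_{0,J})\big)$. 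Such a joint moment expands, via the diagram formula for products of multiple Wiener-It\^o integrals with respect to $W_{F_N}$, into a finite sum over pairings of terms of the form $\int \big(\text{product of the }K_{N,j_\ell}\text{'s, with arguments identified/negated according to the pairing}\big)\, dF_N\cdots dF_N$. Each such term converges to the analogous integral against $F_0$: this is where one combines the uniform-on-compacts convergence $K_{N,j}\to K_{0,j}$ (to handle the integrand on a large box $[-A,A]^{\cdots}$), the weak convergence $F_N\to F_0$ (to pass the limit on the box, using continuity of $K_{0,j}$), and the uniform integrability \eqref{Integrability} together with Cauchy--Schwarz (to control the tails outside the box uniformly in $N$).

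The main obstacle — and the place requiring genuine care — is this last convergence of the individual diagram terms, specifically showing that the contributions from outside a large compact box are uniformly small. The hypothesis \eqref{Integrability} controls $\int_{\mathbb{R}^{k_j}\setminus[-A,A]^{k_j}}|K_{N,j}|^2\,dF_N^{\otimes k_j}$ for each single factor, but in a joint moment several kernels are tied together by a pairing, sharing integration variables; one must repeatedly apply the Cauchy--Schwarz inequality to decouple the factors so that each appears with its own square, then bound each resulting piece either by its full $L^2(F_N^{\otimes k_j})$ norm (which is bounded, since it converges) or, when at least one variable lies outside the box, by the small quantity in \eqref{Integrability}. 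Organizing this Cauchy--Schwarz splitting over an arbitrary pairing diagram is the only real bookkeeping. Once the joint moments are shown to converge, I would finish by invoking moment-determinacy: the limit vector is a fixed vector of multiple Wiener-It\^o integrals of orders $\le k$, hence each linear combination $\sum_j w_j I_{k_j}^{(0)}(K_{0,j})$ lives in a fixed sum of Wiener chaoses, and a random variable in a finite sum of chaoses has moments that do not grow too fast (Carleman's condition holds), so its law is moment-determinate; therefore convergence of all joint moments upgrades to convergence in distribution, giving \eqref{ConvLemma Result}.

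Alternatively, if one prefers to avoid the moment computation, one can argue: by Cram\'er--Wold it suffices to show $\sum_j w_j I_{k_j}^{(N)}(K_{N,j})\ConvD \sum_j w_j I_{k_j}^{(0)}(K_{0,j})$; writing each $I_{k_j}^{(N)}(K_{N,j})= I_k^{(N)}(\widetilde K_{N,j})$ up to an asymptotically constant factor via tensoring with a fixed bump as above, the left side becomes $I_k^{(N)}\big(\sum_j w_j' \widetilde K_{N,j}\big)$, a single $k$-fold integral whose kernel converges uniformly on compacts to the corresponding limit kernel and inherits \eqref{Integrability} (finite sums preserve both properties), so the univariate Lemma 3 of \cite{dobrushin1979non} applies directly and yields the claim. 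I would present this second route as the main argument, since it is shorter, and remark that the tensoring step is legitimate because $h$ can be chosen with $\int|h|^2\,dF_N\to\int|h|^2\,dF_0$ by weak convergence, and because the cross terms produced by the product formula are of strictly lower order and vanish in the limit by the same uniform integrability estimates.
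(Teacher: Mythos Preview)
Both of your proposed routes have genuine gaps, and the paper's argument avoids them by a different device.

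\textbf{The tensoring route is broken.} If you set $\widetilde K_{N,j}=K_{N,j}\otimes h^{\otimes(k-k_j)}$, the product formula gives
\[
I_k^{(N)}(\widetilde K_{N,j})=I_{k_j}^{(N)}(K_{N,j})\cdot I_{k-k_j}^{(N)}(h^{\otimes(k-k_j)})-(\text{lower-order integrals}),
\]
and $I_{k-k_j}^{(N)}(h^{\otimes(k-k_j)})$ is a genuine random variable (a Hermite polynomial of the Gaussian $I_1^{(N)}(h)$), \emph{not} an ``asymptotically constant factor''. Hence $\sum_j w_j I_{k_j}^{(N)}(K_{N,j})$ is not of the form $I_k^{(N)}(\text{single kernel})$, and you cannot feed it into the univariate Lemma~3 of \cite{dobrushin1979non}. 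There is no way to pad integrals of different orders into one fixed-order integral by a deterministic rescaling.

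\textbf{The moment route fails at determinacy.} Your claim that Carleman's condition holds for elements of a finite sum of chaoses is false once $\max_j k_j\ge 3$: hypercontractivity gives $(\mathbb{E}X^{2n})^{1/(2n)}\lesssim n^{k/2}$, so $\sum_n(\mathbb{E}X^{2n})^{-1/(2n)}$ behaves like $\sum_n n^{-k/2}$, which converges for $k\ge 3$. Moment-determinacy of chaos elements of order $\ge 3$ is open---indeed, this is precisely the obstruction behind Conjecture~3.8 in the paper. So convergence of joint moments (which you could establish) does not yield convergence in law.

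\textbf{What the paper does instead.} After Cram\'er--Wold, the paper approximates each $K_{N,j}$ (simultaneously for $N=0$ and all large $N$) in $L^2(F_N^{\otimes k_j})$ by a \emph{simple} Hermitian function $g_{M,j}$ supported on products of bounded sets $A_{i,j}$ with $F_0(\partial A_{i,j})=0$; this is where atomlessness, uniform convergence on compacts, continuity of $K_{0,j}$, and the uniform integrability \eqref{Integrability} are all used. For simple kernels, $\sum_j w_j I_{k_j}^{(N)}(g_{M,j})$ is a fixed polynomial in the Gaussians $W_{F_N}(A_{i,j})$, and since $F_N(A_{i,j}\cap A_{i',j'})\to F_0(A_{i,j}\cap A_{i',j'})$ by weak convergence, the vector $(W_{F_N}(A_{i,j}))_{i,j}$ converges in law; the continuous mapping theorem then gives convergence of the polynomial. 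A standard triangular argument (Theorem~3.2 of \cite{patrick1999convergence}) closes the approximation. This route never leaves the level of distributions, so no moment-determinacy is needed, and it handles mixed orders $k_j$ without any padding.
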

\begin{proof}
By the Cram\'er-Wold device, we need to show that for every $(w_1,\ldots,w_J)\in \mathbb{R}^J$ as $N\rightarrow\infty$,
\begin{equation}\label{Target}
X_{N}:=\sum_{j=1}^J w_j I_{k_j}^{(N)}(K_{N,j}) \overset{d}{\rightarrow} X_{0,0}:=\sum_{j=1}^J w_j I_{k_j}^{(0)}(K_{0,j}).
\end{equation}

We show first that (\ref{Target}) holds when replacing all kernels with simple Hermitian functions $g_j$ of the form:
\[
g_j(u_1,\ldots,u_{k_j})=\sum_{i_1,\ldots,i_k=1}^na_{i_1,\ldots,i_{k_j}} \mathrm{1}_{A_{i_1,j}\times\ldots\times A_{i_{k_j},j}}(u_1,\ldots,u_{k_j}),
\]
where $A_{i,j}$'s are bounded Borel sets in $\mathbb{R}$ satisfying $F_0(\partial A_{i,j})=0$, $a_{i_1,\ldots,i_{k_j}}=0$ if any two of $i_1,\ldots,i_{k_j}$ are equal, and
$\overline{g(\mathbf{u})}=g(-\mathbf{u})$. We claim that
\begin{equation}\label{Simple g}
\sum_{j=1}^s w_j I_{k_j}^{(N)}(g_j) \overset{d}{\rightarrow} \sum_{j=1}^s w_j I_{k_j}^{(0)}(g_j).
\end{equation}
Indeed, since $F_N\rightarrow F_0$ weakly and $F_0(\partial A_{i,j})=0$, we have as $N\rightarrow\infty$:
\[\mathbb{E}W_{F_N}(A_{i,j})W_{F_N}(A_{k,l})=F_N(A_{i,j}\cap A_{k,l})\rightarrow F_0(A_{i,j}\cap A_{k,l})=\mathbb{E}W_{F_0}(A_{i,j})W_{F_N}(A_{k,l}),
\]
thus
$\big(W_{F_N}(A_{i,j})\big)_{i,j} \overset{d}{\rightarrow} \big(W_{F_0}(A_{i,j})\big)_{i,j}$ jointly . Since $\sum_{j=1}^s w_j I_{k_j}^{(N)}(g_j)$ is a polynomial  of $W_{F_N}(A_{i,j})$ and by Continuous Mapping Theorem, (\ref{Simple g}) holds.

Next, due to the atomlessness of $F_N$, the uniform convergence of $K_{N,j}$ to $K_{0,j}$ on any compact set, (\ref{Integrability}) and the continuity of $K_{0,j}$, for any $\epsilon>0$,  there exist simple Hermitian $g_j$'s $j=1,\ldots,J$ as above, such that for $N=0$ and $N>N(\epsilon)$ (large enough),
\begin{equation}\label{g appro K_N}
\int_{\mathbb{R}^{k_j}}|K_{N,j}(x_1,\ldots,x_{k_j})-g_j(x_1,\ldots,x_{k_j})|^2 F_N(dx_1)\ldots F_N(dx_{k_j})<\epsilon.
\end{equation}
By (\ref{g appro K_N}) for every $j=1,\ldots,J$, we can find a sequence $g_{M,j}$ such that
\begin{equation}\label{e:a1}
\|I^{(0)}_{k_j}(K_{0,j})-I^{(0)}_{k_j}(g_{M,j})\|_{L^2}<1/M,
\end{equation}
\begin{equation}\label{e:a2}
\|I^{N}_{k_j}(K_{N,j})-I^{N}_{k_j}(g_j)\|_{L^2}<1/M~\text{ for $N>N(M)$ (large enough)},
\end{equation}
hence by (\ref{e:a1})
\begin{equation}\label{0,M to 0,0}
X_{0,M}:=\sum_{j=1}^J w_j I^{(0)}_{k_j}(g_{M,j})\overset{d} {\rightarrow} X_{0,0}:=\sum_{j=1}^J w_j I^{(0)}_{k_j}(K_0)\quad \text{as }M\rightarrow\infty.
\end{equation}
and by (\ref{e:a2}),
\begin{align}\label{N to N,M}
&\lim_M \limsup_N \mathbb{E}|X_N-X_{N,M}|^2\notag\\
:=&\lim_M \limsup_N \mathbb{E}\left|\sum_{j=1}^J w_j I_{k_j}^{(N)}(K_{N,j})-\sum_{j=1}^J w_j I_{k_j}^{(N)}(g_{M,j})\right|^2=0.
\end{align}
Finally, replacing $g_j$ by $g_{M,j}$ in ({\ref{Simple g}}), we have
\begin{equation}\label{N,M to 0,M}
X_{N,M}\overset{d}{\rightarrow}X_{0,M}.
\end{equation}
Thus (\ref{Target}), namely, $X_N\ConvD X_{0,0}$, follows now from (\ref{0,M to 0,0}), (\ref{N to N,M}) and (\ref{N,M to 0,M}) and  Theorem 3.2 of \cite{patrick1999convergence}.
\end{proof}

We can now prove \cref{Pure LRD}:
\begin{proof}
Since \cref{ConvLemma} involves only univariate assumptions and concludes with the desired multivariate convergence (\ref{ConvLemma Result}), one needs to treat only the univariate case. This is done in \cite{dobrushin1979non}.
\end{proof}

\subsection{Proof of \cref{SRD&LRD}} \label{Sec:SRD&LRD}

The following result from \cite{nourdin2012asymptotic} will be used:
\begin{Thm}\label{ExtenBiAsymp}
\textbf{(Theorem 4.7 in \cite{nourdin2012asymptotic}.)}
Consider
\begin{align*}
\mathbf{S}_N&=\left(I_{k_{1,S}}(f_{1,S,N}),\ldots,I_{k_{J_S,S}}(f_{J_S,S,N})\right),\\
\mathbf{L}_N&=\left(I_{k_{1,L}}(f_{1,L,N}),\ldots,I_{k_{J_L,L}}(f_{J_L,L,N})\right),
\end{align*}
where $k_{j_S,S}> k_{j_L,L}$ for all $j_S=1,\ldots,J_S$ and $j_L=1,\ldots,J_L$.

Suppose that as $N\rightarrow \infty$, $\mathbf{S}_N$ converges in distribution to a multivariate normal law,  and $\mathbf{L}_N$
converges in distribution to a multivariate law which has moment-determinate components, then there are independent random vectors $\mathbf{Z}$ and $\mathbf{H}$, such that
\[
(\mathbf{S}_N,\mathbf{L}_N)\ConvD (\mathbf{Z},\mathbf{H}).
\]
\end{Thm}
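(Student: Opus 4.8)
The plan is to prove the result in two stages: first establish \emph{asymptotic moment-independence} of the two blocks, i.e.\ that every mixed joint moment of $(\mathbf{S}_N,\mathbf{L}_N)$ factorizes in the limit, and then upgrade this to genuine independence of the limit laws using moment-determinacy. The two main tools are the product (multiplication) formula for multiple Wiener-It\^o integrals, which expands $I_{p}(f)I_{q}(g)=\sum_{r=0}^{p\wedge q} r!\binom{p}{r}\binom{q}{r} I_{p+q-2r}(f\widetilde{\otimes}_r g)$ into single integrals, and the Nualart--Peccati fourth-moment criterion together with the Peccati--Tudor theorem, which is exactly what the Gaussian convergence of $\mathbf{S}_N$ buys us.

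First I would record the exact orthogonality that the order gap provides. Since integrals of different orders are orthogonal in $L^2$ and $k_{j_S,S}>k_{j_L,L}$ for \emph{all} pairs, every component of $\mathbf{S}_N$ lives in a strictly higher Wiener chaos than every component of $\mathbf{L}_N$, so $\mathrm{Cov}\big(I_{k_{j_S,S}}(f_{j_S,S,N}),I_{k_{j_L,L}}(f_{j_L,L,N})\big)=0$ identically in $N$. Iterating the product formula, any joint moment $\mathbb{E}\big[\prod_s S_{s,N}^{a_s}\prod_\ell L_{\ell,N}^{b_\ell}\big]$ expands, via the diagram/pairing formula, into a sum over perfect matchings of the integration ``legs'' with no edge inside a single factor; the contribution in which no leg of an $\mathbf{S}$-factor is paired with a leg of an $\mathbf{L}$-factor is precisely $\mathbb{E}\big[\prod_s S_{s,N}^{a_s}\big]\,\mathbb{E}\big[\prod_\ell L_{\ell,N}^{b_\ell}\big]$. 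The whole problem thus reduces to showing that every matching carrying at least one cross (i.e.\ $\mathbf{S}$--$\mathbf{L}$) edge contributes $o(1)$; such contributions are dominated, by Cauchy--Schwarz on the diagram sum, by products of cross-contraction norms $\|f_{s,N}\otimes_r g_{\ell,N}\|$ with $r\ge 1$.

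The key step, which I expect to be the main obstacle, is to show that these cross-contractions vanish. Here I would use the identity $\|f\otimes_r g\|^2=\langle f\otimes_{p-r}f,\,g\otimes_{q-r}g\rangle$ with $p=k_{s,S}$, $q=k_{\ell,L}$, and Cauchy--Schwarz, to obtain $\|f_{s,N}\otimes_r g_{\ell,N}\|^2\le \|f_{s,N}\otimes_{p-r}f_{s,N}\|\,\|g_{\ell,N}\otimes_{q-r}g_{\ell,N}\|$. The order gap is exactly what makes this work: since $1\le r\le q<p$ we have $1\le p-r\le p-1$, so $f_{s,N}\otimes_{p-r}f_{s,N}$ is a self-contraction at a nontrivial order. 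Because $\mathbf{S}_N$ converges to a Gaussian vector, Peccati--Tudor and Nualart--Peccati force $\|f_{s,N}\otimes_{p-r}f_{s,N}\|\to 0$ for every such $r$, while $\|g_{\ell,N}\otimes_{q-r}g_{\ell,N}\|\le\|g_{\ell,N}\|^2$ stays bounded because $\mathbf{L}_N$ has convergent (hence bounded) variances. Every cross-contraction therefore tends to $0$, and asymptotic moment-independence follows.

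Finally I would upgrade moment-independence to distributional independence. Hypercontractivity in a fixed chaos converts the convergence, hence boundedness, of all moments into tightness of $(\mathbf{S}_N,\mathbf{L}_N)$, so along any subsequence there is a weak limit whose moments equal the limiting joint moments, namely the products $\mathbb{E}[\prod_s Z_s^{a_s}]\,\mathbb{E}[\prod_\ell H_\ell^{b_\ell}]$ of the marginal moments of $\mathbf{Z}$ and $\mathbf{H}$; these are exactly the moments of the independent product law $\mathbf{Z}\otimes\mathbf{H}$. Since the Gaussian vector $\mathbf{Z}$ is moment-determinate and $\mathbf{H}$ is moment-determinate by hypothesis, a Petersen-type multivariate determinacy argument (one-dimensional determinacy of each marginal forcing determinacy of the product law) identifies the subsequential limit uniquely as $\mathbf{Z}\otimes\mathbf{H}$ with $\mathbf{Z}$ and $\mathbf{H}$ independent, yielding $(\mathbf{S}_N,\mathbf{L}_N)\ConvD(\mathbf{Z},\mathbf{H})$. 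The indispensability of the determinacy hypothesis at this last step is precisely what confines the companion result \cref{SRD&LRD} to Hermite ranks $1$ and $2$.
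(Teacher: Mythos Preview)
Your proposal is correct and follows essentially the same route as the paper: verify that all cross-contractions $\|f_{s,S,N}\otimes_r f_{\ell,L,N}\|\to 0$ via the identity $\|f\otimes_r g\|^2=\langle f\otimes_{p-r}f,\,g\otimes_{q-r}g\rangle$ together with Nualart--Peccati on the $\mathbf{S}$-side and boundedness on the $\mathbf{L}$-side, then feed this into the Nourdin--Rosinski asymptotic moment-independence machinery and upgrade to genuine independence by Petersen's multivariate moment-determinacy result. The paper packages the moment-independence and subsequence/tightness steps into \cref{AsympIndep} and \cref{CorForAsympIndep}, but the substance is the same.
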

A proof of \cref{ExtenBiAsymp} can be found in Appendix \ref{sec:AsympIndep} (see Theorem \ref{t:B2}).

\begin{proof}[Proof of Theorem \ref{SRD&LRD}]
Using the reduction arguments of \cref{ReductCLT} and \cref{ReductNCLT}, we can replace $G_{j,S}$ in (\ref{SRD Part})  with $\sum_{m={k_{j,S}}}^M g_{m,j,S}H_m$, and we can replace  $G_{j,L}$ in (\ref{LRD Part}) with $g_{k_L,j,L}H_{k_L}$, where $k_{j,S}>k_{j,L}=1 \text{ or }2 $ are the corresponding Hermite ranks and $g_{m,j,S}$, $g_{k_L,j,L}$ are the corresponding coefficients of their Hermite expansions.

Fix finite time points $t_i$, $i=1\ldots,I$, we need to consider the joint convergence of the following vector:
\begin{equation}\label{SRD&LRD Target}
\left(S_{i,j_S,N},L_{i,j_L,N}\right)_{i, j_S, j_L}:=\left( \frac{1}{A_{j_S,S}}\sum_{m={k_{j_S,S}}}^M g_{m,j_S,S}S_{N,t_i}(H_m), \frac{1}{A_{j_L,L}} g_{k_L,j_L,L}S_{N,t_i}(H_{k_L}) \right)_{i, j_S, j_L},
\end{equation}
where $i=1,\ldots,I$, $j_S=1,\ldots, J_S$, $j_L=1,\ldots,J_L$.

As in the proof of \cref{Pure SRD}, using (\ref{Herm<->Int}), we  express Hermite polynomials as multiple Wiener-It\^{o} integrals:
\begin{align*}
S_{i,j_S,N}=\sum_{m=k_{j_S,S}}^MI_m(f_{m,i,j_S,N}),\quad
L_{i,j_L,N}=\sum_{m=k_{j_L,L}}^MI_m(f_{m,i,j_L,N}),
\end{align*}
where $f_{m,i,j_S,N}$, $f_{i,j_L,N}$ are some symmetric square-integrable functions.

Express the vector in (\ref{SRD&LRD Target})
 as $(\mathbf{S}_N,\mathbf{L}_N)$, where $\mathbf{S}_N:=(S_{i,j_S,N})_{i,j_S}$,
$\mathbf{L}_N:=(L_{i,j_L,N})_{i,j_L}$.

By \cref{Pure SRD}, $\mathbf{S}_N$ converges in distribution to some multivariate normal distribution, and by \cref{Pure LRD}, $\mathbf{L}_N$ converges to a multivariate distribution with moment-determinate marginals, because by assumption the limits only involve Hermite rank $k=1$ (normal distribution) and $k=2$ (Rosenblatt distribution). The normal distribution is moment-determinate. The Rosenblatt distribution is also moment-determinate because it has analytic characteristic function (\cite{taqqu1975weak} p.301).

We can now use \cref{ExtenBiAsymp} to conclude the proof.
\end{proof}

\appendix
\section{Invariance of joint distribution among different representations of Hermite process}\label{sec:JointInvar}
The Hermite process admits four different representations (\cite{pipiras2010regularization}):

Let $B(.)$ be the real Gaussian random measure and $W(.)$ be the complex Gaussian  random measure, as defined  in Section 6 of \cite{taqqu1979convergence}. $H_0\in (1-1/(2k),1)$.
\begin{enumerate}
\item Time domain representation:
\begin{equation}\label{eq:TimeDomRep}
Z^{(k)}_{H_0}(t)=a_{k,H_0} =\int'_{\mathbb{R}^k}\left(\int_0^t \prod_{j=1}^k(s-x_j)_+^{H_0-3/2}ds\right)B(dx_1)\ldots B(dx_k)
\end{equation}
\item Spectral domain representation:
\begin{equation}\label{eq:SpecDomRep}
Z^{(k)}_{H_0}(t)=b_{k,H_0}\int''_{\mathbb{R}^k}\frac{e^{i(x_1+\ldots+x_k)t}-1}{i(x_1+\ldots+x_k)}\prod_{j=1}^k |x_j|^{1/2-H_0}W(dx_1)\ldots W(dx_k)
\end{equation}
\item Positive half-axis representation:
\begin{equation}\label{eq:PosAxiRep}
Z^{(k)}_{H_0}(t)=c_{k,H_0}\int'_{[0,\infty)^k}\left(\int_0^t\prod_{j=1}^k x_j^{1/2-H_0}(1-sx_j)_+^{H_0-3/2}ds\right)B(dx_1)\ldots B(dx_k)
\end{equation}
\item Finite interval representation:
\begin{equation}\label{eq:FinIntRep}
Z^{(k)}_{H_0}(t)=d_{k,H_0}\int'_{[0,t]^k}\left(\prod_{j=1}^k x_j^{1/2-H_0}\int_0^t x^{k(H_0-1/2)}\prod_{j=1}^k(s-x_j)_+^{H_0-3/2}ds\right)B(dx_1)\ldots B(dx_k)
\end{equation}
\end{enumerate}
where  $a_{k,H_0},b_{k,H_0},c_{k,H_0},d_{k,H_0}$ are constant coefficients to guarantee that $\mathrm{Var}(Z^{(k)}_{H_0}(t))=1$,  given in (1.17) and (1.18) of \cite{pipiras2010regularization}.

Keep $H_0$ fixed throughout. We will prove the following:
\begin{Thm}\label{t:order}
The joint distribution of a vector made up of Hermite processes of possibly different orders $k$, but sharing the same random measure $B(.)$ or $W(.)$ in their Wiener-It\^o integral representations,  remains the same when  switching from one of the above representations to another.
\end{Thm}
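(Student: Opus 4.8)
The plan is to reduce the multivariate statement to the already-known equivalence of the four representations for a \emph{single} Hermite process (\cite{pipiras2010regularization}), by observing that every one of the kernel transformations used there is ``diagonal in the order'': it is induced by one and the same unitary operator $V$ acting on the one-variable integrator, and the order-$k$ kernel in the target representation is $V^{\otimes k}$ applied to the order-$k$ kernel in the source representation.

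First I would isolate the following Hilbert-space lemma. Let $V\colon L^2(\mu_1)\to L^2(\mu_2)$ be a (real- or complex-) linear surjective isometry intertwining the two Gaussian random measures $M_1,M_2$, in the sense that the centered Gaussian family $\{M_2(V\varphi):\varphi\in L^2(\mu_1)\}$ has the same covariance (hence the same joint law) as $\{M_1(\varphi):\varphi\in L^2(\mu_1)\}$. Then for any finite family of symmetric kernels $f_j\in L^2(\mu_1^{k_j})$, $j=1,\dots,J$,
\[
\bigl(I_{k_1}^{M_1}(f_1),\dots,I_{k_J}^{M_1}(f_J)\bigr)\ \overset{d}{=}\ \bigl(I_{k_1}^{M_2}(V^{\otimes k_1}f_1),\dots,I_{k_J}^{M_2}(V^{\otimes k_J}f_J)\bigr).
\]
The quickest justification is via second quantization: $\Gamma(V)$ is a unitary map from the $L^2$-space generated by $M_1$ onto that generated by $M_2$, it carries the $k$-th Wiener chaos onto the $k$-th Wiener chaos with $I_k^{M_1}(f)\mapsto I_k^{M_2}(V^{\otimes k}f)$, and, being induced by an isomorphism of the underlying Gaussian Hilbert spaces, it preserves all finite-dimensional distributions of random vectors whose entries lie in the union of the chaoses. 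Equivalently, one argues ``by hand'': the joint moments of $\bigl(I_{k_j}^{M}(f_j)\bigr)_j$ are, by the diagram/product formula, universal polynomials in the $L^2(\mu)$-inner products of the contractions $f_{j_1}\otimes_r f_{j_2}$, all of which $V$ preserves; since the vectors live in fixed chaoses their laws are moment-determinate, so equality of all joint moments gives equality in law.

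Then I would run through the transformations that \cite{pipiras2010regularization} uses to pass between the four representations and check that each is of this form. Time domain $\leftrightarrow$ spectral domain is the Fourier--Plancherel transform, with $M_1=B$ the real random measure and $M_2=W$ the complex Hermitian one; here the only care needed is that Plancherel carries real $L^2(\mathbb{R})$ onto exactly the Hermitian-symmetric complex $L^2$ on which $W$ lives, and that the intertwining relation is the one already recorded in the excerpt, namely $H_m(X_n)=\int'' e^{in(x_1+\dots+x_m)}\,dW(x_1)\ldots dW(x_m)$. Spectral $\leftrightarrow$ positive half-axis and positive half-axis $\leftrightarrow$ finite interval are changes of variable on the (half-)line (reflection/inversion $x\mapsto 1/x$ and passage to $[0,t]$), each of which is a bijective $L^2$-isometry between the relevant control-measure spaces, with the Gaussian measures transforming accordingly. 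Applying the lemma to each transformation and composing the resulting equalities in law yields invariance of the joint distribution for an arbitrary vector of Hermite processes of possibly different orders sharing the common integrator, which is \cref{t:order}.

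The main obstacle is the bookkeeping at the real-versus-complex interface in the Fourier step: one must make the statement ``$V$ is a surjective isometry intertwining $B$ and $W$'' fully precise, keeping track of the Hermitian symmetry $W(A)=\overline{W(-A)}$, so that $\Gamma(V)$ really is unitary rather than merely isometric onto a subspace; and, for the half-line representations, one must verify that the change of variables maps \emph{onto} (not just into) the target $L^2$ space. I would also stress that all arguments have to be carried out at the Hilbert-space level, since the Hermite kernels $f_{k,d}^{(t)}$ (and their half-axis/finite-interval counterparts) are only known to be square-integrable, so no pointwise or continuity-based manipulation of kernels is available. Once the transformations are confirmed to be noise-intertwining unitaries, the upgrade from the single-process equivalence to the joint statement is automatic, because nothing in the lemma is sensitive to the kernels being the specific ones of Hermite processes.
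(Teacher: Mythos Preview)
Your second-quantization lemma is correct and gives a clean proof of the time-domain $\leftrightarrow$ spectral-domain equivalence; the paper does essentially the same thing via an orthonormal-basis argument (a multivariate extension of Lemma~6.1 of \cite{taqqu1979convergence}), and your $\Gamma(V)$ formulation is if anything more transparent.

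The gap is in the other two transitions. You assert that the passages to the positive half-axis and finite-interval representations are ``changes of variable on the (half-)line \ldots\ each a bijective $L^2$-isometry,'' so that your lemma applies with the induced $V^{\otimes k}$. But this is not how \cite{pipiras2010regularization} obtains those equivalences, and no such single unitary $V$ is available. The heuristic route from (\ref{eq:TimeDomRep}) to (\ref{eq:PosAxiRep}) exchanges the $ds$-integral with the Wiener--It\^o integral and then changes variables inside $\int_{\mathbb{R}}(s-x)_+^{H_0-3/2}\,dB(x)$; however, for fixed $s$ the integrand $(s-x)_+^{H_0-3/2}$ is \emph{not} in $L^2(\mathbb{R})$, and the change of variables is $s$-dependent, so it does not induce a fixed unitary on the one-variable $L^2$ space to which your lemma could be applied. (Likewise, writing the spectral kernel as $\int_0^t\prod_j e^{isx_j}|x_j|^{-d}\,ds$ does not help, since $x\mapsto e^{isx}|x|^{-d}\notin L^2(\mathbb{R})$; and for the finite-interval representation the target kernel is supported on $[0,t]^k$, so any candidate $V$ would have to be $t$-dependent, which is incompatible with handling several times with one common random measure.) The paper instead follows the regularization argument of \cite{pipiras2010regularization}: truncate to obtain a genuine $L^2$ Gaussian process $G_\epsilon(s)$, establish equality in joint law of the regularized processes by the now-legitimate change of variables, apply $\int_0^t H_k(\cdot)\,ds$, and pass to the $L^2(\Omega)$ limit as $\epsilon\to 0$. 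The multivariate upgrade is then immediate because the regularization and the $L^2(\Omega)$ limits are component-wise and the stochastic Fubini identity holds almost surely---but this route lies outside your $V^{\otimes k}$ framework.
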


The  following  notations are used to denote Wiener-It\^o integrals with respect to $B(.)$ and $W(.)$ respectively: \[I(f):=\int'_{\mathbb{R}^k}f(x_1,\ldots,x_k)dB(x_1)\ldots dB(x_k),\]
\[\tilde{I}(g):=\int''_{\mathbb{R}^k} g(\omega_1,\ldots,\omega_k)dW(\omega_1)\ldots dW(\omega_k).\]
where $'$ indicates that we don't integrate on $x_i=x_j, i\neq j$, $''$ indicates that we don't integrate on $\omega_i=\pm \omega_j$, $ i\neq j$, $f$ is a symmetric function and $g$ is an Hermitian function ($g(\mathbf{\omega})=\overline{g(-\mathbf{\omega})}$).

The next lemma establishes the equality in joint distribution between time domain representation (\ref{eq:TimeDomRep}) and spectral domain representation  (\ref{eq:SpecDomRep}), which  is a multivariate extension of Lemma 6.1 in \cite{taqqu1979convergence}.
\begin{Lem}
Suppose that $A_j(x_1,\ldots,x_{k_j})$ is a symmetric function in $L^2(\mathbb{R}^{k_j})$, $j=1,\ldots,J$. Let $\tilde{A}(x_1,\ldots,x_{k_j})$ be its $L^2$-Fourier transform:
\[
\tilde{A}_j(\omega_1,\ldots,\omega_{k_j})
=\frac{1}{(2\pi)^{k_j/2}}\int_{\mathbb{R}^m} \exp(i\sum_{n=1}^{k_j}x_n\omega_n)A_j(x_1,\ldots,x_{k_j})dx_1\ldots dx_{k_j}.
\]
Then
\[
\left(I_{k_1}(A_1),\ldots,I_{k_J}(A_J)\right)
\overset{d}{=}\left(\tilde{I}_{k_1}(\tilde{A}_1),\ldots,\tilde{I}_{k_J}(\tilde{A}_J)\right).
\]
\end{Lem}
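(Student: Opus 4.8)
The plan is to reduce the multivariate statement to the known univariate result (Lemma 6.1 of \cite{taqqu1979convergence}, which asserts $I_{k}(A)\overset{d}{=}\tilde I_{k}(\tilde A)$ for a single symmetric $A\in L^2(\mathbb{R}^{k})$) by exploiting the fact that both integrals are built from the \emph{same} underlying Gaussian family. The first step is to recall the isometry that links the time-domain random measure $B$ and the spectral random measure $W$: if $B$ is a real Gaussian random measure with Lebesgue control measure, then the complex Hermitian measure defined via $\widehat{B}(\Delta)$ "$=\int \widehat{\mathbf 1_\Delta}\,dW$" realizes the Plancherel identity $\int f\,dB \overset{d}{=}\int \widehat f\,dW$ jointly for all $f\in L^2$, because this is a linear isometry between the two first Wiener chaoses that preserves all covariances. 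Concretely, one fixes one probability space carrying $W$, defines $B$ on it by the inverse Fourier correspondence on $L^1\cap L^2$ extended by $L^2$-continuity, and checks $\mathbb{E}\big[(\int f\,dB)(\int g\,dB)\big]=\int f\bar g = \int \widehat f\,\overline{\widehat g}=\mathbb{E}\big[(\int \widehat f\,dW)(\int \widehat g\,dW)\big]$ — so the two Gaussian families are not merely equal in law but can be taken \emph{equal}, which is the crucial upgrade over a marginal statement.

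The second step is to lift this to arbitrary order via the product structure of multiple Wiener--It\^o integrals. Once $B$ and $W$ are coupled on one space so that $\int f\,dB=\int\widehat f\,dW$ for every $f\in L^2(\mathbb{R})$, the standard construction of $I_k$ and $\tilde I_k$ on simple (off-diagonal) symmetric kernels expresses them as the same polynomial in the respective first-chaos increments; since those increments agree under the coupling, $I_k(A)=\tilde I_k(\widehat A)$ holds \emph{as an identity of random variables} for simple $A$, where $\widehat A$ is the $k$-dimensional $L^2$-Fourier transform $\prod$-applied coordinatewise. (This is exactly the content of the univariate Lemma 6.1, re-read as an a.s.\ identity under the coupling rather than an equality in law.) Extending from simple kernels to general symmetric $A_j\in L^2(\mathbb{R}^{k_j})$ is routine: simple kernels are dense, $I_{k_j}$ and $\tilde I_{k_j}$ are $L^2$-isometries, and the Fourier transform is an $L^2(\mathbb{R}^{k_j})$-isometry, so the identity $I_{k_j}(A_j)=\tilde I_{k_j}(\tilde A_j)$ passes to the limit, simultaneously for $j=1,\ldots,J$.

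The third step is then immediate: since $I_{k_j}(A_j)=\tilde I_{k_j}(\tilde A_j)$ a.s.\ for each $j$ under the common coupling, the random vectors $\big(I_{k_1}(A_1),\ldots,I_{k_J}(A_J)\big)$ and $\big(\tilde I_{k_1}(\tilde A_1),\ldots,\tilde I_{k_J}(\tilde A_J)\big)$ coincide a.s., hence have the same joint law, which is the assertion. I expect the main obstacle to be purely expository rather than mathematical: making precise the coupling of $B$ and $W$ on a single space — i.e.\ articulating that Lemma 6.1 of \cite{taqqu1979convergence}, though stated as "$\overset{d}{=}$", is proved by an argument that in fact identifies the two chaoses — and checking the Hermitian-symmetry bookkeeping ($\tilde A_j(\omega)=\overline{\tilde A_j(-\omega)}$ because $A_j$ is real and symmetric) so that $\tilde I_{k_j}$ is well defined and real-valued. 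Everything else (density of simple kernels, $L^2$-continuity, Plancherel) is standard. Note that the invariance among the \emph{other} representations in \cref{t:order} — positive half-axis (\ref{eq:PosAxiRep}) and finite-interval (\ref{eq:FinIntRep}) — reduces to this lemma by the same mechanism, since those are again obtained from the time-domain kernels by deterministic $L^2$ transformations of the kernels, with $B$ held fixed; so once this lemma is in hand, the remaining cases in \cref{t:order} follow the identical "transform the kernel, keep the random measure, invoke the $L^2$-isometry" template.
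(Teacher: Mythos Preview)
Your argument is correct and is essentially the paper's own: both recognize that the time- and spectral-domain multiple integrals can be written as the \emph{same} measurable function $K_j$ of an underlying Gaussian sequence (the paper does this via an explicit orthonormal basis $\{\psi_i\}$ of $L^2(\mathbb{R})$, setting $X_i=\int\psi_i\,dB$ and $Y_i=\int\tilde\psi_i\,dW$; you do the equivalent thing by coupling $B$ and $W$ on one space through Plancherel), so that the joint equality in law follows immediately from $\mathbf{X}\overset{d}{=}\mathbf{Y}$. One small caveat on your closing remark: the positive half-axis and finite-interval cases do \emph{not} reduce to ``the same mechanism'' --- the paper handles those via a regularization and a stochastic Fubini argument, because the untruncated kernels fail to lie in $L^2$ --- but this is tangential to the lemma at hand.
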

\begin{proof}
The proof is a slight extension of the proof of Lemma 6.1 of \cite{taqqu1979convergence}. The idea is to use a complete orthonormal  set $\{\psi_i,i\ge 0\}$ in $L^2(\mathbb{R})$ to represent each $A_j$ as an infinite polynomial form of order $k_j$ with respect to $\psi_i$'s, as is done in (6.3) of \cite{taqqu1979convergence}. Each $I_{k_j}(A_j)$ can be then written in the form of (6.4) of \cite{taqqu1979convergence}, which  is essentially a  function of   $X_i:=\int\psi_i(x)dB(x), i\ge 0$, denoted
\begin{align*}
I_{k_j}(A_j)=K_j(\mathbf{X}),
\end{align*}
where $\mathbf{X}=(X_0,X_1,\ldots)$. Thus
\begin{align}\label{eq:K_j(X)}
\left(I_{k_1}(A_1),\ldots,I_{k_J}(A_J)\right)=\mathbf{K}(\mathbf{X}),
\end{align}
where the vector function $\mathbf{K}=(K_1,\ldots,K_J)$.

Now, $\tilde{A}_j$ can also be written as an infinite polynomial form of order $k_j$ with respect to $\tilde{\psi}_i, i\ge 0$, where $\tilde{\psi}_i(\omega)= (2\pi)^{-1/2}\int e^{ix\omega}\psi_i(x)dx$ is the $L^2$-Fourier transform of $\psi_i$, as is done in (6.5) of \cite{taqqu1979convergence}. Set $Y_j:=\int\tilde{\psi}_i(\omega)dW(\omega), i\ge 0$. Then, as in (6.6) of \cite{taqqu1979convergence}, we have
\begin{align*}
\tilde{I}_{k_j}(\tilde{A}_j)=K_j(\mathbf{Y}),
\end{align*} where $K_j$'s are the same as above, $\mathbf{Y}=(Y_0,Y_1,\ldots)$, and thus
\begin{align}\label{eq:K_j(Y)}
\left(\tilde{I}_{k_1}(\tilde{A}_1),\ldots,\tilde{I}_{k_J}(\tilde{A}_J)\right)=\mathbf{K}(\mathbf{Y}).
\end{align}

By (\ref{eq:K_j(X)}) and (\ref{eq:K_j(Y)}), it suffices to show that $\mathbf{X}\overset{d}{=}\mathbf{Y}$. This is true because by Parseval's identity,  $\mathbf{X}$ and $\mathbf{Y}$ both consist of i.i.d. normal random variables with mean 0 and identical variance, . For details, see \cite{taqqu1979convergence}.
\end{proof}

We now  complete the proof of \cref{t:order}. We still need to justify the equality in joint distribution between time domain representation (\ref{eq:TimeDomRep}) and positive half-axis representation (\ref{eq:PosAxiRep}) or finite interval representation (\ref{eq:FinIntRep}).

First let's summarize the arguments of \cite{pipiras2010regularization} for going from (\ref{eq:TimeDomRep}) to (\ref{eq:PosAxiRep}) or (\ref{eq:FinIntRep}).
The heuristic idea is that by changing the integration order in (\ref{eq:TimeDomRep}), one would have 
\begin{align}
Z^{(k)}_{H_0}&=\int_{0}^t\left( \int'_{\mathbb{R}^k} \prod_{j=1}^k (s-x_j)^{H_0-3/2} B(dx_1)\ldots B(dx_k)\right) ds\notag\\
&= \int_0^t H_k\left(\int_\mathbb{R} (s-x)_+^{H_0-3/2}B(dx)\right)ds\label{eq:HeuChangeOrder},
\end{align}
where $H_k$ is $k$-th Hermite polynomial. But in fact $g(x):=(s-x)_+^{H_0-3/2}\notin L^2(\mathbb{R})$, and consequently $G(s):=\int_\mathbb{R} (s-x)_+^{H_0-3/2}B(dx)$  is not well-defined.

The way to get around this is to do a regularization, that is, to truncate $g(x)$ as $g_\epsilon(x):=g(x)1_{s-x>\epsilon}(x)$ for $\epsilon>0$. Now the Gaussian process $G_{\epsilon}(t):=\int_\mathbb{R}g_\epsilon(x) B(dx)$ is well-defined. Next, after some change of variables, one gets the new desired representation of $G_{\epsilon}(t)$, say $G^*_{\epsilon}(t)$, where $G^*_{\epsilon}(t)\overset{d}{=}G_{\epsilon}(t)$. Setting $Z^{(k)}_{\epsilon,H_0}(t)=\int_0^t H_k(G_\epsilon(t))dt$ and
$Z^{(k)*}_{\epsilon,H_0}(t)=\int_0^t H_k(G^*_\epsilon(t))dt$, yields
\begin{equation}\label{e:Zk}
 Z^{(k)}_{\epsilon,H_0}(t)\overset{d}{=}Z^{(k)*}_{\epsilon,H_0}(t).
 \end{equation}
  Finally by letting $\epsilon\rightarrow 0$, one can show that $Z^{(k)}_{\epsilon,H_0}(t)$ converges in $L^2(\Omega)$ to the Hermite process $Z^{(k)}_{H_0}(t)$, while $Z^{(k)*}_{\epsilon,H_0}(t)$ converges in $L^2(\Omega)$ to some $Z^{(k)*}_{H_0}(t)$, which is  the desired alternative representation of $Z^{(k)}_{H_0}(t)$.

The above argument relies on the stochastic Fubini theorem (Theorem 2.1 of \cite{pipiras2010regularization}) which legitimates the change of integration order, that is, for $f(s,\mathbf{x})$ defined on $\mathbb{R}\times \mathbb{R}^k$,
if
$\int_\mathbb{R} \|f(s,.)\|_{L^2(\mathbb{R}^k)}ds <\infty$
(which is the case after regularization), then
\begin{align*}
\int'_{\mathbb{R}^k}\int_\mathbb{R} f(s,x_1,\ldots,x_k) ds B(dx_1)\ldots B(dx_k)
=\int_\mathbb{R}\int'_{\mathbb{R}^k}f(s,x_1,\ldots,x_k) B(dx_1)\ldots B(dx_k)ds\quad a.s.
\end{align*}

Now, consider the multivariate case.  Note that we still have equality of the the joint distributions
as in (\ref{e:Zk}) and the equality is preserved in the $L^2(\Omega)$ limit as $\epsilon \rightarrow 0$. Moreover,
  the stochastic Fubini theorem (Theorem 2.1 of \cite{pipiras2010regularization}) extends naturally to the multivariate setting since the change of integration holds as an almost sure equality.
Therefore one gets equality in joint distribution when  switching from (\ref{eq:TimeDomRep}) to (\ref{eq:PosAxiRep}) or (\ref{eq:FinIntRep}). \hfill $\square$

\section{Asymptotic independence of Wiener-It\^o integral vectors}\label{sec:AsympIndep}
We prove here Theorem 4.1 by extending a combinatorial proof of Nourdin and Rosinski \cite{nourdin2011asymptotic}
\footnote{The  proof in Theorem \ref{AsympIndep} below is an extension to Wiener-It\^o integral vectors of the original combinatorial proof of Theorem 3.1 of \cite{nourdin2011asymptotic} given for Wiener-It\^o integral scalars. The result also follows  from Theorem 3.4
in  \cite{nourdin2012asymptotic} which includes  Wiener-It\^o integral vectors, but with a proof based on Malliavin Calculus.
}.

First, some background.
In the papers \cite{ustunel1989independence} and \cite{kallenberg1991independence}, a criterion for independence  between two random variables belonging to Wiener Chaos, say, $I_p(f)$ and $I_p(g)$, is given as
\begin{equation}\label{Indep}
f{\otimes_1} g=0 \qquad a.s.
\end{equation}
where $\otimes_1$ means contraction of order 1 and is defined below.

 The result of \cite{nourdin2012asymptotic} involves the following problem: if one has sequences $\{f_n\}$, $\{g_n\}$, when will asymptotic independence hold between $I_p(f_n)$ and $I_q(g_n)$ as $n\rightarrow\infty$?
 Motivated by (\ref{Indep}), one may guess that the criterion is $f_n{\otimes_1} g_n\rightarrow 0$ as $n\rightarrow \infty$. This is, however, shown to be false by a counterexample in \cite{nourdin2012asymptotic}: set $p=q=2$, $f_n=g_n$ and assume that $I_2(f_n)\ConvD Z\sim N(0,1)$. One can then show that $f_n\otimes_1 f_n\rightarrow 0$, while obviously  $\left(I_2(f_n),I_2(f_n)\right)\ConvD (Z,Z)$.
Let $\|.\|$ denote the $L^2$ norm in the appropriate dimension and let $<.,.>$ denote the corresponding inner product.

We now define contractions.
The contraction $\otimes_r$ between two symmetric square integrable functions $f$ and $g$ is defined as
\begin{align*}
&(f\otimes_r g)(x_1,\ldots,x_{p-r},y_1,\ldots,y_{q-r}):=\\&\int_{\mathbb{R}^r}f(x_1,\ldots,x_{p-r},s_1,\ldots,s_r)g(y_1,\ldots,y_{q-s},s_1,\ldots,s_r)ds_1\ldots ds_r
\end{align*}
If $r=0$, the contraction is just the tensor product:
\begin{equation}\label{e:TensorProd}
f\otimes_0 g=f\otimes g:=f(x_1,\ldots,x_{p})g(y_1,\ldots,y_{q}).
\end{equation}

The symmetrized contraction $\tilde{\otimes}_r$ involves one more step, namely, the symmetrization of the function obtained from the contraction. This is done by summing over all permutations of the variables and dividing by the number of permutations. Note that as the contraction is only defined for symmetric functions,  replacing ${\otimes}_r$ with $\tilde{\otimes}_r$ enables one to consider a sequence of symmetrized contractions of the form $\Big(\ldots\big((f_1\tilde{\otimes}_{r_1}f_2)\tilde{\otimes}_{r_2}f_3\big)\ldots\Big)\tilde{\otimes}_{r_{n-1}}f_n$
.

We will use the following product formula (Proposition 6.4.1 of \cite{peccati2011wiener}) for multiple Wiener-It\^{o} integrals
\begin{equation}\label{ProductFormulaPre}
I_p(f)I_q(g)=\sum_{r=0}^{p\wedge q} r! \binom{p}{r}\binom{q}{r}I_{p+q-2r}(f{\otimes}_r g) \quad p,q\ge0.
\end{equation}
Because the symmetrization of the integrand doesn't change the multiple Wiener-It\^{o} integral, ${\otimes}_r$ could be replaced with $\tilde{\otimes}_r$ in the product formula.

For a vector $\mathbf{q}=(q_1,\ldots,q_k)$, we denote $|\mathbf{q}|:=q_1+\ldots+q_k$. By a suitable iteration of (\ref{ProductFormulaPre}), we have the following multiple product formula:
\begin{equation}\label{ProductFormula}
\prod_{i=1}^k I_{q_i}(f_i)=\sum_{\mathbf{r}\in C(\mathbf{q},k)} a(\mathbf{q},k,\mathbf{r})I_{|\mathbf{q}|-2|\mathbf{r}|}\left( \ldots(f_1\tilde{\otimes}_{r_1}f_2)\ldots \tilde{\otimes}_{r_{k-1}}f_k \right),
\end{equation}
where $\mathbf{q}\in \mathbb{N}^n$,  the index set $C(\mathbf{q},k)=\{\mathbf{r}\in \prod_{i=1}^{k-1}\{0,1,\ldots,q_{i+1}\}: r_1\le q_1, r_i\le (q_1+\ldots+q_i)-2(r_1+\ldots+r_{i-1}),i=2,\ldots k-1\}$, and $a(\mathbf{q},k,\mathbf{r})$ is some integer factor.

\begin{Thm}\label{AsympIndep}
\textbf{(Asymptotic Independence of Multiple Wiener-It\^{o} Integral Vectors.)}
Suppose we have the joint convergence
\[
(\mathbf{U}_{1,N},\ldots,\mathbf{U}_{J,N}) \ConvD(\mathbf{U}_1,\ldots,\mathbf{U}_J),
\]
where
\[
\mathbf{U}_{j,N}=\left(I_{q_{1,j}}(f_{1,j,N}),\ldots,I_{q_{I_j,j}}(f_{I_j,j,N}) \right).
\]
Assume
\begin{equation}\label{Contraction2Zero}
\lim_{N\rightarrow\infty} \|f_{i_1,j_1,N}\otimes_{r}f_{i_1,j_2,N}\|=0
\end{equation}
for all $i_1,i_2, j_1\neq j_2$, and $r=1,\ldots,q_{i_1,j_1}\wedge q_{i_2,j_2}$.

Then using the notation $\mathbf{u}^{\mathbf{k}}=u_1^{k_1}\ldots u_m^{k_m}$, we have
\begin{equation}\label{MomentAsympIndep}
\mathbb{E}[\mathbf{U}_1^{\mathbf{k}_1}\ldots\mathbf{U}_J^{\mathbf{k}_J}]
=\mathbb{E}[\mathbf{U}_1^{\mathbf{k}_1}]\ldots \mathbb{E}[\mathbf{U}_J^{\mathbf{k}_J}]
\end{equation}
for all $\mathbf{k}_j\in \mathbb{N}^{I_j}$

Moreover, if every component of every $\mathbf{U}_{j}$ is  moment-determinate, then
$\mathbf{U}_1,\ldots,\mathbf{U}_J$ are independent.
\end{Thm}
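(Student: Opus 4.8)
The plan is to prove the moment identity (\ref{MomentAsympIndep}) first, and then deduce independence from moment-determinacy. For the moment identity, I would expand each power $\mathbf{U}_{j,N}^{\mathbf{k}_j}$ as a product of the multiple Wiener-It\^o integrals making up the entries of $\mathbf{U}_{j,N}$, so that the full expression $\mathbb{E}[\mathbf{U}_{1,N}^{\mathbf{k}_1}\cdots\mathbf{U}_{J,N}^{\mathbf{k}_J}]$ becomes the expectation of one big product $\prod_{\ell} I_{q_\ell}(f_{\ell,N})$ of integrals, some ``within-block'' factors and some ``across-block'' factors. Applying the multiple product formula (\ref{ProductFormula}) repeatedly, this expectation becomes a finite sum over contraction patterns $\mathbf{r}$, where only the terms producing an integral of order $0$ survive the expectation. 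Each surviving term is (up to combinatorial constants) an iterated symmetrized contraction that has been reduced all the way down, and hence is a finite sum of fully-paired contractions of the kernels — a product of scalars of the form $\langle \text{iterated contraction}, \text{iterated contraction}\rangle$.

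The key step is then to show that any contraction pattern that pairs at least one variable of a kernel $f_{i_1,j_1,N}$ with a variable of a kernel $f_{i_2,j_2,N}$ with $j_1\neq j_2$ contributes a term tending to $0$. This should follow from hypothesis (\ref{Contraction2Zero}) combined with Cauchy--Schwarz: a fully-contracted scalar built from a connected contraction graph linking two different blocks contains, as a ``bottleneck,'' at least one contraction $f_{i_1,j_1,N}\otimes_r f_{i_2,j_2,N}$ with $r\geq 1$, whose $L^2$-norm goes to $0$; the remaining factors stay bounded because each $\|f_{i,j,N}\|$ is bounded (this boundedness comes from the assumed joint convergence in distribution, which forces second moments, hence the $L^2$-norms of the kernels, to converge and thus be bounded). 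So in the limit only the contraction patterns that never connect two distinct blocks survive; those factor exactly into a product over $j$ of the expectations $\mathbb{E}[\mathbf{U}_{j,N}^{\mathbf{k}_j}]$, and passing to the limit (again using convergence in distribution plus uniform integrability of the relevant polynomials in integrals living in a fixed finite sum of Wiener chaoses, where hypercontractivity gives all moments) yields (\ref{MomentAsympIndep}).

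For the final ``Moreover'' clause, I would argue as follows. By (\ref{MomentAsympIndep}), the joint moments of $(\mathbf{U}_1,\ldots,\mathbf{U}_J)$ factor as the product of the joint moments of the individual $\mathbf{U}_j$'s. Since each component of each $\mathbf{U}_j$ is moment-determinate, the law of each vector $\mathbf{U}_j$ is determined by its moments (moment-determinacy of each coordinate, together with the fact that these coordinates live in a fixed finite sum of Wiener chaoses, upgrades to moment-determinacy of the joint law of $\mathbf{U}_j$, e.g.\ via the Cram\'er--Wold device and the boundedness of chaoses under linear combinations). Hence the product measure $\bigotimes_j \mathcal{L}(\mathbf{U}_j)$ is itself moment-determinate, and it has the same moments as $\mathcal{L}(\mathbf{U}_1,\ldots,\mathbf{U}_J)$ by (\ref{MomentAsympIndep}); therefore the two laws coincide, which is precisely the assertion that $\mathbf{U}_1,\ldots,\mathbf{U}_J$ are independent.

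The main obstacle I anticipate is the bookkeeping in the second step: carefully identifying, in the iterated symmetrized contractions produced by (\ref{ProductFormula}), which scalar terms are ``cross-block connected'' and showing each such term is dominated by a product of bounded norms times at least one vanishing factor $\|f_{i_1,j_1,N}\otimes_r f_{i_2,j_2,N}\|$. The symmetrization $\tilde\otimes$ mixes variables across the accumulated kernels, so one must be slightly careful that the bottleneck argument still applies after symmetrization; the standard fix is that symmetrization is an average over permutations and $L^2$-norms only decrease (or stay controlled) under it, so Cauchy--Schwarz bounds pass through, and any genuinely cross-block pairing still leaves an honest order-$r$ contraction with $r\geq1$ between two distinct-block kernels somewhere in the chain. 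The extension from the scalar case of \cite{nourdin2011asymptotic} is then essentially this combinatorial observation that the within-block factors decouple from the across-block ones.
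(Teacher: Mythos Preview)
Your proposal is correct and follows essentially the same route as the paper: expand via the product formula (\ref{ProductFormula}), show that every surviving (order-zero) term involving a cross-block contraction tends to $0$ by the generalized Cauchy--Schwarz inequality together with hypothesis (\ref{Contraction2Zero}) and the uniform boundedness of the $\|f_{i,j,N}\|$, and finish with moment-determinacy (the paper invokes Petersen's theorem \cite{petersen1982relation} for the passage from componentwise to joint moment-determinacy, which is the precise reference behind your Cram\'er--Wold sketch).

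The bookkeeping obstacle you flag is exactly what the paper's organization is designed to handle, via two devices worth noting. First, the paper reduces (\ref{MomentAsympIndep}) to showing
\[
\lim_{N\to\infty}\mathbb{E}\prod_{j=1}^J\bigl(\mathbf{U}_{j,N}^{\mathbf{k}_j}-\mathbb{E}[\mathbf{U}_{j,N}^{\mathbf{k}_j}]\bigr)=0,
\]
which removes the need to track the ``purely within-block'' terms separately; after centering, every $H_{j,N}$ arising from block $j$ has strictly positive order, so any surviving scalar must contain at least one genuine cross-block contraction. Second, rather than looking for a direct $f_{i_1,j_1,N}\otimes_r f_{i_2,j_2,N}$ bottleneck in a fully expanded diagram, the paper aggregates hierarchically---first contracting all copies of $f_{i,j,N}$ into $h_{i,j,N}$, then all $h_{\cdot,j,N}$ into a single block-kernel $H_{j,N}$---and proves the cascade
\[
\|f_{i_1,j_1,N}\otimes_r f_{i_2,j_2,N}\|\to 0 \ \Rightarrow\ \|h_{i_1,j_1,N}\otimes_s h_{i_2,j_2,N}\|\to 0 \ \Rightarrow\ \|H_{j_1,N}\otimes_t H_{j_2,N}\|\to 0,
\]
each step via Lemma~2.3 of \cite{nourdin2012asymptotic}. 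This sidesteps the symmetrization issue cleanly: the cross-block contraction is always between two aggregated block-kernels $H_{j_1,N}$ and $H_{j_2,N}$, not between raw $f$'s buried inside a symmetrized chain.
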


\begin{proof}
This is an extension of a proof in \cite{nourdin2011asymptotic}.

The index $i=1,\ldots,I_j$ refers to the components within the vector $\mathbf{U}_{j,N}$, $j=1,\ldots,J$. For notational simplicity, we let $I_j=I$, that is, each $\mathbf{U}_{j,N}$ has the same number of components.

Let $|\mathbf{k}|$ denote the sum of its components $k_1+\ldots+k_m$.
First  to show  (\ref{MomentAsympIndep}), it suffices to show
\[
\lim_{N\rightarrow\infty} \mathbb{E}\prod_{j=1}^J (\mathbf{U}_{j,N}^{\mathbf{k}_j}-\mathbb{E}[\mathbf{U}_{j,N}^{\mathbf{k}_j}])=0
\]
for any $|\mathbf{k}_1|>0,\ldots,|\mathbf{k_J}|>0$. Note that $\mathbf{U}_{j,N}^{\mathbf{k}_j}=U_{1,j,N}^{k_{1,j}}\ldots U_{I,j,k}^{k_{I,j}}$ is a scalar.

By (\ref{ProductFormula}), one gets
\[
I_q(f)^k=\sum_{\mathbf{r}\in C_{q,k}}a(q,k,\mathbf{r})I_{kq-2|\mathbf{r}|}
\left(\ldots(f\tilde{\otimes}_{r_1} f)\ldots \tilde{\otimes}_{r_{k-1}}f\right)
\]
where $a(q,k,r)$'s are integer factors which don't play an important role, and $C_{q,k}$ is some index set.
If $\mathbf{U}_{j,N}^{\mathbf{k}_j}=\prod_{i=1}^I I_{q_{i,j}}(f_{i,j,N})^{k_{i,j}}$, then
\begin{align}\label{Product1}
\mathbf{U}_{j,N}^{\mathbf{k}_j}
&=\prod_{i=1}^I \sum_{\mathbf{r}\in C_{q_{i,j},k_{i,j}}}a(q_{i,j},k_{i,j},\mathbf{r})I_{k_{i,j}q_{i,j}-2|\mathbf{r}|}
\left(\ldots(f_{i,j,N}\tilde{\otimes}_{r_1} f_{i,j,N})\ldots \tilde{\otimes}_{r_{k_{i,j}-1}}f_{i,j,N}\right) \nonumber \\
&=\sum_{\mathbf{r}^1\in C_{q_{1,j},k_{1,j}}}\ldots\sum_{\mathbf{r}^I\in C_{q_{I,j},k_{I,j}}}  \prod_{i=1}^I
a(q_{i,j},k_{i,j},\mathbf{r}^i)I_{k_{i,j}q_{i,j}-2|\mathbf{r}^i|}(h_{i,j,N})
\end{align}
where
\[h_{i,j,N}=\left(\ldots(f_{i,j,N}\tilde{\otimes}_{r^i_1} f_{i,j,N})\ldots \tilde{\otimes}_{r^i_{k_{i,j}-1}}f_{i,j,N}\right).\]

If one applies the product formula (\ref{ProductFormula}) to the product in (\ref{Product1}), one gets that $\mathbf{U}_{j,N}^{\mathbf{k}_j}$ involves terms of the form $I_{|\mathbf{p}_j|-2|\mathbf{s}_j|}(H_{j,N})$ ($\mathbf{p}_j$ and $\mathbf{s}_j$ run through some suitable index sets),
where
\[
H_{j,N}=\left(\ldots(h_{1,j,N}\tilde{\otimes}_{s_1} h_{2,j,N})\ldots \tilde{\otimes}_{s_{I-1}}h_{I,j,N}\right).
\]
Since the expectation of a Wiener-It\^o integral of positive order is 0 while a Wiener-It\^o integral of zero order is a constant, $\mathbf{U}_{j,N}^{\mathbf{k}_j}-\mathbb{E}[\mathbf{U}_{j,N}^{\mathbf{k}_j}]$ involves $I_{|\mathbf{p}_j|-2|\mathbf{s}_j|}(H_{j,N})$  with $|\mathbf{p}_j|-2|\mathbf{s}_j|>0$ only. Therefore, every $H_{j,N}$ involved in the expression of $\mathbf{U}_{j,N}^{\mathbf{k}_j}-\mathbb{E}[\mathbf{U}_{j,N}^{\mathbf{k}_j}]$ has $n_j=|\mathbf{p}_j|-2|\mathbf{s}_j|>0$ variables.

Note that there are no products left at this point in the expression of $\mathbf{U}_{j,N}^{\mathbf{k}_j}-\mathbb{E}[\mathbf{U}_{j,N}^{\mathbf{k}_j}]$, only sums.
But to compute $\mathbb{E}\prod_{j=1}^J (\mathbf{U}_{j,N}^{\mathbf{k}_j}-\mathbb{E}[\mathbf{U}_{j,N}^{\mathbf{k}_j}])
$, one needs to apply the product formula (\ref{ProductFormula}) again and then compute the expectation. Since  Wiener-It\^{o} integrals of positive order have mean 0, taking the expectation involves focusing on the terms of zero order which are constants. Since $f\otimes_p g=<f,g>=EI_p(f)I_p(g)$ for functions $f$ and $g$ both having $p$ variables, $\mathbb{E}\prod_{j=1}^J (\mathbf{U}_{j,N}^{\mathbf{k}_j}-\mathbb{E}[\mathbf{U}_{j,N}^{\mathbf{k}_j}])$ involves only terms of the form:
\begin{align}
G_N&=\left(\ldots(H_{1,N}\tilde{\otimes}_{t_1} H_{2,N})\ldots\tilde{\otimes}_{t_{J-2}}H_{J-1,N}\right)\tilde{\otimes}_{t_{J-1}} H_{J,N}\\
&=\int_{\mathbb{R}^{n_J}} \left(H_{1,N}\tilde{\otimes}_{t_1} H_{2,N})\ldots\tilde{\otimes}_{t_{J-2}}H_{J-1,N}\right)H_{J,N}~ d\mathbf{x}\label{e:G_N}
\end{align}
where the contraction size vector $\mathbf{t}=(t_1,\ldots,t_{J-1})$ runs through some index set. Since these contractions must yield a constant, we have
\begin{equation}\label{e:|t|}
|\mathbf{t}|=\frac{1}{2}(n_1+\ldots+n_J)>0,
\end{equation}
where $n_j$ is the number of variables of $H_{j,N}$. There is therefore at least one component (call it $t$) of $\mathbf{t}$ which is strictly positive and thus there is a pair $j_1, j_2$ with $j_1 \neq j_2$, such that $H_{J_1}$ and $H_{j_2}$ that have at least one common argument.

One now needs to show that $G_N$ in (\ref{e:G_N}) tends to 0. This is done by applying the generalized Cauchy-Schwartz inequalities in Lemma 2.3 of \cite{nourdin2012asymptotic} successively, through the following steps:
\begin{align}
&\text{for any $j_1\neq j_2$, $i_1,i_2$ and $r>0$, }  \lim_{N\rightarrow\infty}\|f_{i_1,j_1,N}\otimes_r f_{i_2,j_2,N}\| = 0~ \notag\\&\implies
\text{for any $j_1\neq j_2$, $i_1,i_2$ and $s>0$, }
\lim_{N\rightarrow\infty}\|h_{i_1,j_1,N}\otimes_s h_{i_2,j_2,N}\|=0~  \notag\\&\implies
\text{for any $j_1\neq j_2$ and $t>0$, }
\lim_{N\rightarrow\infty}\|H_{j_1,N}\otimes_t H_{j_2,N}\|=0~ \label{e:H}\\&\implies
\lim_{N\rightarrow\infty}G_N=0,\label{e:G}
\end{align}
proving (\ref{MomentAsympIndep}). Here we illustrate some details for going from (\ref{e:H}) to (\ref{e:G}), and omit the first two steps which use a similar argument.

Let $C=\{1,2,\ldots,(n_1+\ldots n_J)/2\}$. Suppose $c$ is a subset of $C$, then we use the notation $\mathbf{z}_c$ to denote $\{z_{j_1},\ldots,z_{j_{|c|}}\}$ where $\{j_1,\ldots,j_{|c|}\}=c$ and $|c|$ is the cardinality of $c$. When $c=\emptyset$, $\mathbf{z}_c=\emptyset$.

Observe that (\ref{e:G_N}) is a sum (due to symmetrization) of terms of the form:
\begin{equation}\label{e:G_N Detail}
\int_{\mathbb{R}^{|C|}} H_{1,N}(\mathbf{z}_{c_1})\ldots H_{J,N}(\mathbf{z}_{c_J})d\mathbf{z}_C,
\end{equation}
where every $c_j$, $j=1,\ldots,J$, is a subset of $C$. Note that  since $|t|=t_1+\ldots+t_J>0$ in (\ref{e:|t|}), there must exist $j_1\neq j_2 \in \{1,\ldots,J\}$, such that $c_0:=c_{j_1}\cap c_{j_2}\neq \emptyset$.
By the generalized Cauchy Schwartz inequality (Lemma 2.3 in \cite{nourdin2012asymptotic}), one gets a bound for (\ref{e:G_N Detail}) as:
\begin{align*}
\left|\int_{\mathbb{R}^{|C|}} H_{1,N}(\mathbf{z}_{c_1})\ldots H_{J,N}(\mathbf{z}_{c_J})d\mathbf{z}_C\right| \le
\|H_{j_1,N} \otimes_{|c_0|} H_{j_2,N}\| \prod_{j\neq j_1,j_2} \|H_{j,N}\|,
\end{align*}
where $\|H_{j_1,N} \otimes_{|c_0|} H_{j_2,N}\|\rightarrow 0$ as $N\rightarrow \infty$ by (\ref{e:H}). In addition, $\|f_{i,j,N}\|$, $N\ge 1$  are uniformly  bounded due to the tightness of the distribution of $I_{k_{i,j}}(f_{i,j,N}), N\ge 1$ (Lemma 2.1 of \cite{nourdin2012asymptotic}). This, by the generalized Cauchy-Schwartz inequality (Lemma 2.3 of in \cite{nourdin2012asymptotic}), implies that $\|h_{i,j,N}\|, N\ge 1$ are uniformly bounded, which further implies the uniform boundedness of  $\|H_{j,N}\|, N\ge 1$.
Hence (\ref{e:G_N Detail}) goes to $0$ as $N\rightarrow\infty$ and thus (\ref{e:G}) holds.

Finally, if every component of every $\mathbf{U}_{j}$ is  moment-determinate, then by Theorem 3 of
\cite{petersen1982relation}, the distribution of $\mathbf{U}:=(\mathbf{U}_1,\ldots,\mathbf{U}_J)$ is determined by its joint moments. But by (\ref{MomentAsympIndep}), the joint moments of $\mathbf{U}$ are the same as if the $\mathbf{U}_j$'s were independent. Then the joint moment-determinancy implies independence.
\end{proof}

\begin{Cor}\label{CorForAsympIndep}
With the notation of \cref{AsympIndep}, suppose that condition (\ref{Contraction2Zero}) is satisfied and that as $N\rightarrow \infty$, each $\mathbf{U}_{j,N}$ converges in distribution to some multivariate law which has moment-determinate components. Then there are independent random vectors $\mathbf{U}_1,\ldots,\mathbf{U}_J$ such that
\begin{equation}\label{U Conv Jointly}
(\mathbf{U}_{1,N},\ldots,\mathbf{U}_{J,N}) \ConvD(\mathbf{U}_1,\ldots,\mathbf{U}_J).
\end{equation}
\end{Cor}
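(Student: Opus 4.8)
The plan is to combine a tightness/subsequence argument with \cref{AsympIndep}. First I would note that each $\mathbf{U}_{j,N}$, being convergent in distribution on a Euclidean space, forms a tight family, and since a finite product of tight families is tight, the stacked vector $(\mathbf{U}_{1,N},\ldots,\mathbf{U}_{J,N})$ is tight on $\mathbb{R}^{I_1+\cdots+I_J}$. By Prokhorov's theorem, every subsequence of $\{(\mathbf{U}_{1,N},\ldots,\mathbf{U}_{J,N})\}$ then admits a further subsequence converging in distribution to some limiting law $\mathbf{M}$.

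Next I would check that along such a convergent subsequence the hypotheses of \cref{AsympIndep} are in force: condition (\ref{Contraction2Zero}) is a statement about the full sequence of kernels, hence persists along any subsequence, and joint convergence now holds by construction. The $j$-th block of $\mathbf{M}$ must coincide with the law of $\mathbf{U}_j$, because $\mathbf{U}_{j,N}\ConvD\mathbf{U}_j$ along the full sequence and a fortiori along the subsequence. Consequently every one-dimensional marginal of $\mathbf{M}$ is one of the moment-determinate laws assumed in the statement, so by Theorem 3 of \cite{petersen1982relation} the law $\mathbf{M}$ is itself moment-determinate. \cref{AsympIndep} then gives both that the mixed moments of $\mathbf{M}$ factor as in (\ref{MomentAsympIndep}) and, via moment-determinacy, that the blocks of $\mathbf{M}$ are independent; hence $\mathbf{M}$ is the joint law of independent vectors $\mathbf{U}_1,\ldots,\mathbf{U}_J$ having exactly the prescribed marginals.

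Here one should dispatch the routine point that convergence in distribution of vectors of multiple Wiener-It\^o integrals of fixed orders entails convergence of all joint moments (by hypercontractivity on a fixed finite sum of Wiener chaoses, all powers are uniformly integrable); this is what allows the moment identity (\ref{MomentAsympIndep}) supplied by \cref{AsympIndep} to pass to the subsequential limit $\mathbf{M}$. Since the limiting law identified in this way does not depend on the chosen subsequence, the standard subsequence criterion for weak convergence (every subsequence has a further subsequence converging to the same limit) yields $(\mathbf{U}_{1,N},\ldots,\mathbf{U}_{J,N})\ConvD(\mathbf{U}_1,\ldots,\mathbf{U}_J)$ with the $\mathbf{U}_j$ independent, which is exactly (\ref{U Conv Jointly}).

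The main obstacle is conceptual rather than computational: \cref{AsympIndep} takes joint convergence as a hypothesis, so the real content is to manufacture a joint subsequential limit by compactness and then show that every such limit is forced to be the independent coupling of the given marginals, with moment-determinacy playing the role of ruling out spurious joint limits. Everything else (joint tightness, moment convergence on fixed chaoses, the subsequence principle) is standard.
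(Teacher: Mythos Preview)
Your proposal is correct and follows essentially the same approach as the paper: tightness of the stacked vector from marginal convergence, subsequential limits via Prokhorov, application of \cref{AsympIndep} along each convergent subsequence to force the limit to be the independent coupling of the $\mathbf{U}_j$'s, and the subsequence principle to conclude. The paper's proof is a terse three-line version of exactly this; your write-up simply makes explicit the routine points (persistence of (\ref{Contraction2Zero}) along subsequences, identification of block marginals, the role of Petersen's theorem, hypercontractivity for moment convergence) that the paper leaves implicit.
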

\begin{proof}
Since each $\mathbf{U}_{j,N}$ converges in distribution, the vector of vectors $(\mathbf{U}_{1,N},\ldots,\mathbf{U}_{J,N})$ is  tight in distribution, so any of its subsequence has a further subsequence converging in distribution to a vector
$(\mathbf{U}_1,\ldots\mathbf{U}_J)$. But by \cref{AsympIndep},  the $\mathbf{U}_j$'s are independent, and the convergence in distribution of each $\mathbf{U}_{j,N}$ implies that $\mathbf{U}_{j,N}\ConvD \mathbf{U}_{j}$, and hence (\ref{U Conv Jointly}) holds.
\end{proof}

Now we are in the position to state the result used in Theorem
\ref{SRD&LRD} in the proof of the SRD and LRD mixed case.
\begin{Thm} \label{t:B2}
Consider
\begin{align*}
\mathbf{S}_N&=\left(I_{k_{1,S}}(f_{1,S,N}),\ldots,I_{k_{J_S,S}}(f_{J_S,S,N})\right),\\
\mathbf{L}_N&=\left(I_{k_{1,L}}(f_{1,L,N}),\ldots,I_{k_{J_L,L}}(f_{J_L,L,N})\right),
\end{align*}
where $k_{j_S,S}> k_{j_L,L}$ for all $j_S=1,\ldots,J_S$ and $j_L=1,\ldots,J_L$.

Suppose that as $N\rightarrow \infty$, $\mathbf{S}_N$ converges in distribution to a multivariate normal law,  and $\mathbf{L}_N$
converges in distribution to a multivariate law which has moment-determinate components, then there are independent random vectors $\mathbf{Z}$ and $\mathbf{H}$, such that
\[
(\mathbf{S}_N,\mathbf{L}_N)\ConvD (\mathbf{Z},\mathbf{H}).
\]
\end{Thm}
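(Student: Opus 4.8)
The plan is to obtain Theorem \ref{t:B2} as a direct application of \cref{CorForAsympIndep} with $J=2$ groups, setting $\mathbf{U}_{1,N}=\mathbf{S}_N$ and $\mathbf{U}_{2,N}=\mathbf{L}_N$; note that \cref{CorForAsympIndep} is the appropriate tool here, since we are given only that $\mathbf{S}_N$ and $\mathbf{L}_N$ converge separately, not jointly, and it is exactly \cref{CorForAsympIndep} that upgrades separate convergence to joint convergence together with independence. Two hypotheses have to be checked. The first — that each limiting law has moment-determinate components — is easy: the limit of $\mathbf{S}_N$ is multivariate normal, so each one-dimensional marginal is a (possibly degenerate) Gaussian, which is moment-determinate, and the limit of $\mathbf{L}_N$ has moment-determinate components by assumption. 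The second hypothesis is the cross-contraction condition (\ref{Contraction2Zero}): $\|f_{i_1,S,N}\otimes_r f_{i_2,L,N}\|\to 0$ for all indices $i_1,i_2$ and all $r=1,\ldots,k_{i_1,S}\wedge k_{i_2,L}$. This is the step I expect to require real work.

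To verify it, I would fix $i_1,i_2$, abbreviate $f:=f_{i_1,S,N}$ of order $p:=k_{i_1,S}$ and $g:=f_{i_2,L,N}$ of order $q:=k_{i_2,L}$, so that $p>q$ by hypothesis and $r$ runs over $1,\ldots,q$, and use the Cauchy--Schwarz-type identity obtained by expanding the squared $L^2$ norm and carrying out the integration over the uncontracted variables first:
\begin{equation*}
\|f\otimes_r g\|^2=\int_{\mathbb{R}^{2r}}(f\otimes_{p-r}f)(s,s')\,(g\otimes_{q-r}g)(s,s')\,ds\,ds'\le\|f\otimes_{p-r}f\|\;\|g\otimes_{q-r}g\|.
\end{equation*}
Since $1\le r\le q<p$, we have $1\le p-r\le p-1$, so the first factor is a genuine self-contraction of $f$. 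As $I_p(f_{i_1,S,N})$ is a component of $\mathbf{S}_N$ and therefore converges in distribution to a one-dimensional Gaussian law, the fourth moment theorem (\cite{nualart2005central,peccati2005gaussian}; see also Theorem 7.2.4 in \cite{nourdin2012normal}) forces $\|f_{i_1,S,N}\otimes_s f_{i_1,S,N}\|\to 0$ for every $s=1,\ldots,p-1$ (if the limit variance is $0$ this is immediate, since then $\|f_{i_1,S,N}\|\to 0$), so the first factor tends to $0$. The second factor is merely bounded: convergence in distribution of $I_q(f_{i_2,L,N})$ makes the sequence tight, hence $\sup_N\|f_{i_2,L,N}\|<\infty$ (Lemma 2.1 of \cite{nourdin2012asymptotic}), and the standard estimate $\|g\otimes_{q-r}g\|\le\|g\|^2$ then bounds it. Thus the product tends to $0$, establishing (\ref{Contraction2Zero}). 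Applying \cref{CorForAsympIndep} then produces independent random vectors $\mathbf{Z}$ and $\mathbf{H}$ — necessarily the given limits of $\mathbf{S}_N$ and $\mathbf{L}_N$ — with $(\mathbf{S}_N,\mathbf{L}_N)\ConvD(\mathbf{Z},\mathbf{H})$.

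The main obstacle, and the place where the hypothesis $k_{j_S,S}>k_{j_L,L}$ enters in an essential way, is the index bookkeeping in the contraction estimate above: the strict inequality is exactly what guarantees $p-r\ge 1$ throughout $1\le r\le q$, so that every cross-contraction is dominated by a vanishing self-contraction of the asymptotically Gaussian factor, even though the self-contractions of the LRD factor are only known to be bounded. Were one to weaken the hypothesis to $p\ge q$, the term $r=q=p$ would contribute $\|f\otimes_0 f\|\,\|g\otimes_0 g\|=\|f\|^2\|g\|^2$, which need not vanish — consistent with the fact that asymptotic independence genuinely fails in the equal-order diagonal case (the counterexample $p=q$, $f_n=g_n$ recalled just before \cref{AsympIndep}).
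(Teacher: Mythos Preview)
Your proposal is correct and follows essentially the same route as the paper: reduce to \cref{CorForAsympIndep}, verify the cross-contraction condition via the identity $\|f\otimes_r g\|^2=\langle f\otimes_{p-r}f,\,g\otimes_{q-r}g\rangle$, then bound the first factor by the Nualart--Peccati fourth moment theorem (using that $p-r\ge 1$ since $p>q\ge r$) and the second by tightness and $\|g\otimes_{q-r}g\|\le\|g\|^2$. Your exposition is in fact slightly more careful than the paper's in spelling out why the strict inequality $k_{j_S,S}>k_{j_L,L}$ is needed and in covering the degenerate zero-variance case.
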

\begin{proof}
By \cref{CorForAsympIndep}, we only need to check the contraction condition (\ref{Contraction2Zero}). This is done  as in the proof of Theorem 4.7 of \cite{nourdin2012asymptotic}. For the convenience of the reader, we present the argument here.

Using the identity $\|f \otimes_r g\|^2=<f\otimes_{p-r}f,g\otimes_{q-r} g>$ where $r=1,\ldots, p\wedge q$, $f$ and $g$ have respectively $p$ and $q$ variables, we get for $r=1,\ldots,k_{i,L}$,
\begin{align*}
\|f_{i,S,N}\otimes_r f_{j,L,N}\|^2
&=<f_{i,S,N}\otimes_{k_{i,S}-r}f_{j,S,N},f_{j,L,N}\otimes_{k_{j,L}-r}f_{j,L,N}>\\
&\le \|f_{i,S,N}\otimes_{k_{i,S}-r}f_{j,S,N}\| \|f_{j,L,N}\otimes_{k_{j,L}-r}f_{j,L,N}\|\rightarrow 0
\end{align*}
because $\|f_{i,S,N}\otimes_{k_{i,S}-r}f_{j,S,N}\|\rightarrow 0$ by the Nualart-Peccati Central Limit Theorem \cite{nualart2005central}, and for the second term, one has by Cauchy-Schwartz inequality, $\|f_{j,L,N}\otimes_{k_{j,L}-r}f_{j,L,N}\|\le \|f_{j,L,N}\|^2$ (generalized Cauchy-Schwartz inequality  in  \cite{nourdin2012asymptotic} Lemma 2.3), which is bounded due to the tightness of the distribution of $I_{k_{j,L}}(f_{j,L,N})$ (Lemma 2.1 of \cite{nourdin2012asymptotic}).
Therefore (\ref{Contraction2Zero}) holds and the conclusion follows from \cref{CorForAsympIndep}.
\end{proof}

\noindent\textbf{Acknowledgements.} We would like to thank the referee for the careful reading of the paper and the comments. This work was partially supported by the NSF grant DMS-1007616 at Boston University.


\begin{thebibliography}{10}

\bibitem{beran1994statistics}
J.~Beran.
\newblock {\em Statistics for Long-Memory Processes}, volume~61.
\newblock Chapman \& Hall/CRC, 1994.

\bibitem{patrick1999convergence}
P.~Billingsley.
\newblock {\em Convergence of Probability Measures}.
\newblock Wiley series in probability and statistics: Probability and
  statistics. Wiley, 1999.

\bibitem{breuer1983central}
P.~Breuer and P.~Major.
\newblock Central limit theorems for non-linear functionals of {G}aussian
  fields.
\newblock {\em Journal of Multivariate Analysis}, 13(3):425--441, 1983.

\bibitem{chambers1989central}
D.~Chambers and E.~Slud.
\newblock Central limit theorems for nonlinear functionals of stationary
  {G}aussian processes.
\newblock {\em Probability Theory and Related Fields}, 80(3):323--346, 1989.

\bibitem{dobrushin1979non}
R.L. Dobrushin and P.~Major.
\newblock Non-central limit theorems for non-linear functional of {G}aussian
  fields.
\newblock {\em Probability Theory and Related Fields}, 50(1):27--52, 1979.

\bibitem{doukhan2003theory}
P.~Doukhan, G.~Oppenheim, and M.S. Taqqu.
\newblock {\em Theory and Applications of Long-Range Dependence}.
\newblock Birkh{\"a}user, 2003.

\bibitem{embrechts2002selfsimilar}
P.~Embrechts and M.~Maejima.
\newblock {\em Selfsimilar Processes}.
\newblock Princeton Univ Press, 2002.

\bibitem{giraitis2009large}
L.~Giraitis, H.L. Koul, and D.~Surgailis.
\newblock {\em Large Sample Inference for Long Memory Processes}.
\newblock Imperial College Press, 2012.

\bibitem{ho1990limiting}
H.C. Ho and T.C. Sun.
\newblock Limiting distributions of nonlinear vector functions of stationary
  {G}aussian processes.
\newblock {\em The Annals of Probability}, pages 1159--1173, 1990.

\bibitem{kallenberg1991independence}
Olav Kallenberg.
\newblock On an independence criterion for multiple wiener integrals.
\newblock {\em The Annals of Probability}, 19(2):483--485, 1991.

\bibitem{levy2011asymptotic}
C.~L{\'e}vy-Leduc, H.~Boistard, E.~Moulines, M.S. Taqqu, and V.A. Reisen.
\newblock Asymptotic properties of {U}-processes under long-range dependence.
\newblock {\em The annals of statistics}, 39(3):1399--1426, 2011.

\bibitem{lifshits2012lectures}
M.~Lifshits.
\newblock {\em Lectures on Gaussian Processes}.
\newblock SpringerBriefs in Mathematics. Springer, 2012.

\bibitem{major1981multiple}
P.~Major.
\newblock {\em Multiple {W}iener-{I}t{\^o} Integrals}.
\newblock Citeseer, 1981.

\bibitem{nourdin2012normal}
I.~Nourdin and G.~Peccati.
\newblock {\em Normal Approximations With Malliavin Calculus: From Stein's
  Method to Universality}.
\newblock Cambridge Tracts in Mathematics. Cambridge University Press, 2012.

\bibitem{nourdin2011asymptotic}
I.~Nourdin and J.~Rosinski.
\newblock Asymptotic independence of multiple {W}iener-{I}t\^{o} integrals and
  the resulting limit laws.
\newblock {\em Arxiv preprint arXiv:1112.5070v1}, 2011.

\bibitem{nourdin2012asymptotic}
I.~Nourdin and J.~Rosinski.
\newblock Asymptotic independence of multiple {W}iener-{I}t\^{o} integrals and
  the resulting limit laws.
\newblock {\em Arxiv preprint arXiv:1112.5070v3}, 2012.

\bibitem{nualart2005central}
D.~Nualart and G.~Peccati.
\newblock Central limit theorems for sequences of multiple stochastic
  integrals.
\newblock {\em The Annals of Probability}, 33(1):177--193, 2005.

\bibitem{peccati2011wiener}
G.~Peccati and M.S. Taqqu.
\newblock {\em Wiener Chaos: Moments, Cumulants and Diagrams: A survey with
  Computer Implementation}.
\newblock Springer Verlag, 2011.

\bibitem{peccati2005gaussian}
G.~Peccati and C.~Tudor.
\newblock Gaussian limits for vector-valued multiple stochastic integrals.
\newblock {\em S{\'e}minaire de Probabilit{\'e}s XXXVIII}, pages 219--245,
  2005.

\bibitem{petersen1982relation}
LC~Petersen.
\newblock On the relation between the multidimensional moment problem and the
  one-dimensional moment problem.
\newblock {\em Mathematica Scandinavica}, 51(2):361--366, 1982.

\bibitem{pipiras2010regularization}
V.~Pipiras and M.S. Taqqu.
\newblock Regularization and integral representations of {H}ermite processes.
\newblock {\em Statistics and probability letters}, 80(23):2014--2023, 2010.

\bibitem{Rooch:2012}
Aeneas~T. Rooch.
\newblock {\em Change-Point Tests for Long-Range Dependent Data}.
\newblock PhD thesis, University of Bochum, Germany, 2012.

\bibitem{taqqu1975weak}
M.S. Taqqu.
\newblock Weak convergence to fractional {B}rownian motion and to the
  {R}osenblatt process.
\newblock {\em Probability Theory and Related Fields}, 31(4):287--302, 1975.

\bibitem{taqqu1979convergence}
M.S. Taqqu.
\newblock Convergence of integrated processes of arbitrary {H}ermite rank.
\newblock {\em Probability Theory and Related Fields}, 50(1):53--83, 1979.

\bibitem{ustunel1989independence}
A.S. Ustunel and M.~Zakai.
\newblock On independence and conditioning on {W}iener space.
\newblock {\em The Annals of Probability}, 17(4):1441--1453, 1989.

\end{thebibliography}
\bibliographystyle{plain}

\bigskip

\noindent Shuyang Bai and Murad S. Taqqu\\
Department of Mathematics and Statistics\\
111 Cumminton Street\\
Boston, MA, 02215, US\\
\textit{bsy9142@bu.edu}\\
\textit{murad@bu.edu}
\end{document}